\definecolor{mauve}{rgb}{0.58,0,0.82}
\definecolor{dkgreen}{rgb}{0,0.6,0}
\lstdefinestyle{pitonche} {
    language = Python,
    basicstyle = footnotesizettfamily,
    showspaces = false,
    showstringspaces = false,
    breakautoindent = true,
    flexiblecolumns = true,
    keepspaces = true,
    stepnumber = 1,
    xleftmargin = 0pt
}
\small\color{gray},
\theoremstyle{plain}
\newtheorem{theorem}{Theorem}[section]
\newtheorem{lemma}[theorem]{Lemma}
\newtheorem{corollary}[theorem]{Corollary}
\newtheorem{proposition}[theorem]{Proposition}
\newtheorem{problem}[theorem]{Problem}
\DeclareMathOperator{\Tr}{Tr}
\theoremstyle{definition}
\theoremstyle{remark}
\newtheorem*{remark}{Remark}
\newcommand{\doi}[1]{\url{https://doi.org/#1}}
\tikzset{
    edge/.style={-{Latex[scale=1.2]}},
}
\def\dj{d\kern-0.4em\char"16\kern-0.1em}
\def\Dj{\hbox{\raise0.3ex\hbox{-}\kern-0.4em  D}}
\author[1,2]{Ivan Damnjanovi\'{c}}
\author[3]{Anran Xu}
\author[4,$\dag$]{Kexiang Xu}
\affil[1]{FAMNIT, University of Primorska, Koper, Slovenia}
\affil[2]{Faculty of Electronic Engineering, University of Ni\v{s}, Ni\v{s}, Serbia}
\affil[3]{School of Mathematical Sciences, Nanjing Normal University, Nanjing, China}
\affil[4]{School of Mathematics, Nanjing University of Aeronautics \& Astronautics, Nanjing, China}
\title{On the transmission irregular trees\\with the maximum Wiener index\footnote{Email addresses:\ \texttt{ivan.damnjanovic@famnit.upr.si} (I.\ Damnjanovi\'{c}), \texttt{sdf000112@126.com} (A.\ Xu), \texttt{kexxu1221@126.com} (K.\ Xu). $\dag$ Corresponding author.}}
\date{}
\begin{document}

\maketitle

\begin{abstract}
The transmission of a vertex $v$ in a (chemical) graph $G$ is the sum of distances from $v$ to  other vertices in $G$. If any  two vertices of $G$ have different transmissions, then $G$ is transmission irregular. The Wiener index $W(G)$ of a graph $G$ is the sum of all distances between all unordered pairs of vertices in $G$, which has another formula as the half of the sum of transmissions of all vertices of $G$.
In this paper, we consider the Wiener index maximization problem on the set of transmission irregular trees of a given order $n \in \mathbb{N}$. We solve the problem for all odd values of $n$ and for almost all even values of $n$. Each resolved extremal problem has a unique solution that is a chemical tree.
\end{abstract}
\par\vspace{2mm}
\noindent{{\bf Keywords:}  tree; transmission irregular graph; Wiener index }
\par\vspace{1mm}

\noindent{{\bf AMS Classification (2020):}  05C05, 05C12, 05C35}

\section{Introduction}

\ \indent Throughout this paper we only consider simple, undirected and connected graphs. Let $G=(V(G),E(G))$ be a graph with vertex set $V(G)$ and edge set $E(G)$. For any graph $G$, let $|G|$ denote the order of $G$. We will use $d_G(v)$ to denote the \textit{degree} of a vertex $v$ in graph $G$ (or just $d(v)$ if the graph $G$ is clear from the context). As usual, for two distinct vertices $u,v\in V(G)$, let $d_G(u,v)$ be the \textit{distance} between $u$ and $v$ in a graph $G$, that is, the number of edges in a shortest path connecting $u$ and $v$ in $G$. A connected graph with maximum degree at most four is called a \textit{chemical graph}\cite{Tr1992}, which represents a chemical molecule in Mathematical Chemistry for the reason that any carbon atom in organic compounds forms four chemical bonds. Any other undefined notations and terminology on graph theory can be found in \cite{bondy}.

 As an oldest and well-studied distance-based topological index, the \textit{Wiener index} $W(G)$, introduced by H. Wiener \cite{Wiener1947} when calculating the boiling point of alkanes, of a graph $G$ is defined as the sum of all distances of all unordered pairs of vertices in $G$. The \textit{transmission}, denoted by $\Tr_G(v)$ (or just $\Tr(v)$ if the graph $G$ is clear from the context), of vertex $v$ in a graph $G$ is the sum of all distances from $v$ to all other vertices in $G$, that is, $\Tr_G(v)=\sum\limits_{u\in V(G)\setminus\{v\}}d_G(v,u)$. From the respective definitions of Wiener index and transmission, it is clear that  \begin{equation}\label{intro_aux_1}
        W(G)=\frac{1}{2}\sum\limits_{v\in V(G)}\Tr_G(v)
    \end{equation}
    for any graph $G$. Some recent mathematical results on the Wiener index of graphs are reported in \cite{DAS21,LW24,KR23,XDGW22}. Determining the extremal graphs in a given class of graphs with respect to the Wiener index is attractive for many mathematical researchers. In addition to the survey paper \cite{XD2014}, please search in \cite{KST} for recently related results on this topic.

For a graph $G$, its \textit{transmission set} is $\Tr(G)=\{\Tr_G(v) \mid v\in V(G)\}$. If $|\Tr(G)|=|G|$ with $|G|>1$, that is, all vertices in $G$ have different transmissions, then $G$ is \textit{transmission irregular} \cite{AlKl2018}, or TI for short. In \cite{AlKl2018} it was pointed out that almost all graphs are not transmission irregular, that is to say, the transmission irregular graphs are rare in the set of all connected graphs. Based on this point, characterizing the TI graphs in a given class of graphs is an interesting task. Some relevant results on the various TI graphs are published in \cite{ASJ2022,ASA2022,damn-2024,dams2024,dobrynin-2019b,dobrynin-2019c,XK21,XTK23}.

Based on the expression \eqref{intro_aux_1} of the Wiener index with transmissions, a natural problem occurs into our mind as follows:

\textit{What are the extremal graphs among a given class of TI graphs with respect to the Wiener index?}

In this paper we focus on the problem of determining the maximum TI trees among all TI trees of a given order with respect to the Wiener index. The paper is organized as follows. In Section \ref{sc_prel} some preliminary results are listed or proved for the use in the subsequent proofs. The maximum TI trees of all odd orders and almost all even orders are characterized, respectively, in Sections \ref{sc_odd} and \ref{sc_even}. Some relevant open problems are proposed in Section \ref{sc_conclusion}.

\section{Preliminaries}\label{sc_prel}

\ \indent For any positive integer $k$, we denote by $[k]$ the set $\{1, 2, \ldots, k\}$. We denote by $\mathcal{PS}$ the set of all perfect square numbers. For a set $A$ of numbers with a real number $t$, the coset $A+t$ is just $\{a + t \mid a\in A\}$.

For any $k \ge 3$, let $S(a_1, a_2, \ldots, a_k)$ denote the \textit{starlike tree} with $k$ pendent paths of respective lengths $a_1, a_2, \ldots, a_k$. For any $n \ge 3$ and $k \ge 2$, let $C_n(a_1, a_2, \ldots, a_k)$ be the \textit{ordinary caterpillar tree} of order $n + k$ that arises from the path $P_n = v_1 v_2 \cdots v_n$ by attaching a leaf to each of the vertices $v_{a_1}, v_{a_2}, \ldots, v_{a_k}$, with $2 \le a_i \le n - 1$ for $i \in [k]$. Also, for any $n \ge 3$, let $C_n(a_1, b_1; a_2, b_2; \ldots; a_k, b_k)$, with $k \ge 2$ and $\max\limits_{i \in [k]} b_i \ge 2$, be the\textit{ variant caterpillar tree} of order $n + \sum\limits_{i = 1}^k b_i$ that arises from the path $P_n = v_1 v_2 \cdots v_n$ by attaching a pendent path of length $b_i$ to the vertex $v_{a_i}$, where $2 \le a_i \le n - 1$, for each $i \in [k]$.

For a graph $G$ with $uv\in E(G)$, we denote by $n_G(u, v)$  the number of vertices in $G$ that are closer to $u$ than to $v$ (or just $n(u, v)$ if the graph $G$ is clear from the context). A relation between $n_G(u,v)$ and transmissions is presented in the following result.

\begin{lemma}[\hspace{1sp}{\cite{Bala,EJS1976}}]\label{neighbors_lemma}
    For any connected graph $G$ with $uv\in E(G)$, we have
    \[
        \Tr_G(u) - \Tr_G(v) = n_G(v, u) - n_G(u, v).
    \]
\end{lemma}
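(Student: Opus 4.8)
The plan is to rewrite the difference of transmissions as a single sum over all vertices and then classify each summand using the triangle inequality. Adopting the convention $d_G(w,w) = 0$, I would write $\Tr_G(u) = \sum_{w \in V(G)} d_G(u,w)$ and likewise $\Tr_G(v) = \sum_{w \in V(G)} d_G(v,w)$, so that
\[
\Tr_G(u) - \Tr_G(v) = \sum_{w \in V(G)} \bigl( d_G(u,w) - d_G(v,w) \bigr).
\]
(The contributions of $w = u$ and $w = v$ are $0 - 1$ and $1 - 0$, which cancel, so one may equally well restrict the sum to $w \notin \{u,v\}$.)

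The crucial step is the observation that, because $uv \in E(G)$, the triangle inequality yields $d_G(u,w) \le 1 + d_G(v,w)$ and $d_G(v,w) \le 1 + d_G(u,w)$ for every vertex $w$, whence $d_G(u,w) - d_G(v,w) \in \{-1, 0, 1\}$. This is precisely what makes the identity hold for an arbitrary connected graph and not merely for a tree: even though deleting the edge $uv$ need not disconnect $G$, each individual summand is forced into one of only three values.

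I would then partition $V(G)$ into three classes: the vertices $w$ strictly closer to $u$, which contribute $-1$ each; the vertices strictly closer to $v$, which contribute $+1$ each; and the vertices equidistant from $u$ and $v$, which contribute $0$. By the definition of $n_G(\cdot,\cdot)$, the first class has cardinality $n_G(u,v)$ and the second has cardinality $n_G(v,u)$, so the sum collapses to $n_G(v,u) - n_G(u,v)$, which is exactly the claimed identity.

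The computation is entirely routine once the triangle-inequality bound is secured, so there is no serious obstacle. The only subtlety worth flagging is the possible presence of vertices equidistant from $u$ and $v$; these exist precisely because $G$ is not assumed acyclic, and the argument works smoothly only because they are collected into a separate class that contributes nothing, leaving the two remaining classes to reproduce $n_G(v,u)$ and $n_G(u,v)$.
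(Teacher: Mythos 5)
Your proof is correct: the paper states this lemma as a cited classical result (from Balakrishnan et al.\ and Entringer--Jackson--Snyder) without giving a proof, and your argument is the standard one. The key step --- using $uv \in E(G)$ and the triangle inequality to force each summand $d_G(u,w) - d_G(v,w)$ into $\{-1,0,1\}$, then counting the three classes --- is exactly what makes the identity valid for arbitrary connected graphs, and your bookkeeping (including the cancellation of the $w=u$ and $w=v$ terms against their appearance in $n_G(u,v)$ and $n_G(v,u)$) is sound.
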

 In the following we list two results on the Wiener index of graphs.
\begin{lemma}[\hspace{1sp}{\cite{BSI2008}}]\label{wiener_fusion_lemma}
    Let $G_1$ and $G_2$ be two connected graphs with  $v_i \in V(G_i)$ for $i \in [2]$. Let $G$ be the graph of order $|G_1| + |G_2| - 1$ that arises from disjoint copies of $G_1$ and $G_2$ by identifying the vertex $v_1$ from $G_1$ with the vertex $v_2$ from $G_2$. Then we have
    \[
        W(G) = W(G_1) + W(G_2) + (|G_1| - 1) \, \Tr_{G_2}(v_2) + (|G_2| - 1) \, \Tr_{G_1}(v_1) .
    \]
\end{lemma}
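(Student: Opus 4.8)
The plan is to compute $W(G)$ directly from its definition as a sum over unordered vertex pairs, partitioning those pairs according to which constituent graph their endpoints come from, and then recognizing the resulting pieces as Wiener indices and transmissions.

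Write $v$ for the vertex of $G$ obtained by identifying $v_1$ with $v_2$, and set $V_1 = V(G_1) \setminus \{v_1\}$ and $V_2 = V(G_2) \setminus \{v_2\}$, so that $V(G)$ is the disjoint union of $V_1$, $V_2$ and $\{v\}$, with $|V_1| = |G_1| - 1$ and $|V_2| = |G_2| - 1$. Every unordered pair of distinct vertices of $G$ then lies in exactly one of three classes: both endpoints in $V_1 \cup \{v\}$, both endpoints in $V_2 \cup \{v\}$, or one endpoint in $V_1$ and the other in $V_2$. The pairs involving $v$ are absorbed into the first two classes, and there is no overlap because $v$ is the only vertex common to the two sides.

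The key structural fact I would establish first is that $v$ is a cut vertex separating $V_1$ from $V_2$: since $G_1$ and $G_2$ meet only at $v$, every path in $G$ joining a vertex of $V_1$ to a vertex of $V_2$ must pass through $v$. It follows that distances internal to each side are unchanged, i.e.\ $d_G(x, x') = d_{G_1}(x, x')$ for $x, x' \in V(G_1)$ and similarly inside $G_2$, so that summing distances over the first class gives exactly $W(G_1)$ and over the second class exactly $W(G_2)$. Moreover, for $x \in V_1$ and $y \in V_2$ we obtain the additive formula $d_G(x, y) = d_{G_1}(x, v_1) + d_{G_2}(v_2, y)$.

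It then remains to evaluate the cross sum over the third class. Substituting the additive formula and separating the two terms yields
\[
    \sum_{x \in V_1} \sum_{y \in V_2} d_G(x, y) = |V_2| \sum_{x \in V_1} d_{G_1}(x, v_1) + |V_1| \sum_{y \in V_2} d_{G_2}(v_2, y),
\]
since each summand depends on only one of the two indices. Recognizing $\sum_{x \in V_1} d_{G_1}(x, v_1) = \Tr_{G_1}(v_1)$ and $\sum_{y \in V_2} d_{G_2}(v_2, y) = \Tr_{G_2}(v_2)$, the cross sum equals $(|G_2| - 1)\,\Tr_{G_1}(v_1) + (|G_1| - 1)\,\Tr_{G_2}(v_2)$, and adding the three contributions gives the stated identity. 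The only genuinely nontrivial point is the cut-vertex observation underpinning the additivity of distances; everything after that is bookkeeping.
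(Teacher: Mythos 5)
Your proof is correct: the decomposition of vertex pairs into the two sides plus the cross pairs, the cut-vertex observation giving additivity of distances through the identified vertex, and the resulting evaluation of the cross sum as $(|G_2|-1)\,\Tr_{G_1}(v_1) + (|G_1|-1)\,\Tr_{G_2}(v_2)$ all check out. The paper itself does not prove this lemma --- it is quoted from the literature --- but your argument is the standard direct derivation of that known identity, so there is nothing to compare beyond noting that your write-up would serve as a complete self-contained proof.
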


\begin{lemma}[\hspace{1sp}{\cite{EJS1976}}]\label{wiener_path_lemma}
    For any connected graph $G$ on $n \in \mathbb{N}$ vertices, we have
    \[
        W(G) \le \binom{n + 1}{3},
    \]
    with equality holding if and only if $G \cong P_n$.
\end{lemma}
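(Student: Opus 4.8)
The plan is to prove the statement in two stages: first reduce the optimization over all connected graphs to an optimization over trees, and then show that among trees of order $n$ the path is the unique maximizer. For the reduction I would use the elementary fact that deleting an edge of a connected graph never decreases any distance: given a connected $G$, choose a spanning tree $T$; since $T$ is a spanning subgraph, $d_T(u,v) \ge d_G(u,v)$ for every pair $u,v$, whence $W(G) \le W(T)$. Thus a bound of $\binom{n+1}{3}$ for trees yields the bound for all connected graphs. For the equality discussion, if $G$ is not a tree I would argue that it carries an edge $xy$ outside a chosen spanning tree $T$; then $x$ and $y$ are non-adjacent in $T$, so $d_T(x,y) \ge 2 > 1 = d_G(x,y)$, giving the strict inequality $W(G) < W(T) \le \binom{n+1}{3}$. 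Hence equality forces $G$ to be a tree.

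Second, for trees I would induct on $n$, the base cases $n \in \{1,2\}$ being immediate. Given a tree $T$ of order $n \ge 3$, pick a leaf $v$ with neighbour $u$ and set $T' = T - v$. Viewing $T$ as the identification of $T'$ with a copy of $P_2$ at $u$, Lemma \ref{wiener_fusion_lemma} (with $G_1 = T'$ and $G_2 = P_2$) gives the recursion
\[
    W(T) = W(T') + (n-1) + \Tr_{T'}(u).
\]
By the induction hypothesis $W(T') \le \binom{n}{3}$, with equality if and only if $T' \cong P_{n-1}$.

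The remaining ingredient, which I expect to be the technical heart of the argument, is the bound $\Tr_{H}(u) \le \binom{m}{2}$ for every connected graph $H$ of order $m$ and every vertex $u$, with equality if and only if $H \cong P_m$ and $u$ is an end-vertex. I would prove this by grouping the vertices of $H$ into distance layers from $u$: writing $n_k$ for the number of vertices at distance $k$, connectivity forces $n_k \ge 1$ for all $0 \le k \le \mathrm{ecc}(u)$, while $\sum_k n_k = m$ and $\Tr_H(u) = \sum_k k\,n_k$. Maximizing this linear functional under these constraints is a short combinatorial estimate whose optimum $\binom{m}{2}$ is attained precisely when every layer is a singleton, i.e.\ the distances from $u$ are exactly $1, 2, \ldots, m-1$; for a tree this means $H = P_m$ with $u$ an endpoint.

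Applying this with $H = T'$ and $m = n-1$ gives $\Tr_{T'}(u) \le \binom{n-1}{2}$, so
\[
    W(T) \le \binom{n}{3} + (n-1) + \binom{n-1}{2} = \binom{n+1}{3},
\]
the last equality following from $\binom{n-1}{2} + (n-1) = \binom{n}{2}$ together with the Pascal-type identity $\binom{n}{3} + \binom{n}{2} = \binom{n+1}{3}$. For the equality case I would read back the two optimality conditions: $W(T') = \binom{n}{3}$ forces $T' \cong P_{n-1}$, and $\Tr_{T'}(u) = \binom{n-1}{2}$ forces $u$ to be an endpoint of that path, so that $T$ is obtained by appending a leaf to an end of $P_{n-1}$, namely $T \cong P_n$. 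The main obstacle is thus the transmission-layering bound and the bookkeeping needed to combine it with the induction and with the graph-to-tree reduction so that the equality case is pinned down to $P_n$ alone.
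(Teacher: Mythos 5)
The paper does not prove this lemma at all --- it is imported as a known result from Entringer, Jackson and Snyder \cite{EJS1976} --- so there is no in-paper argument to compare yours against. Your proposal is a correct and complete self-contained proof, and it is essentially the classical one: the spanning-tree reduction $W(G)\le W(T)$ with strictness off trees is sound (an edge of $G$ outside $T$ joins vertices at $T$-distance at least $2$), the leaf-deletion recursion $W(T)=W(T')+(n-1)+\Tr_{T'}(u)$ follows from Lemma~\ref{wiener_fusion_lemma} exactly as you say, and the layering bound $\Tr_H(u)\le\binom{m}{2}$ with equality only for an endpoint of $P_m$ is correctly identified as the technical heart (a clean way to finish it: sorting the distances from $u$ as $0=d_1\le\cdots\le d_m$, connectivity gives $d_i\le i-1$ for every $i$, so the transmission is at most $\sum_{i}(i-1)=\binom{m}{2}$, with equality forcing singleton BFS layers and hence a path rooted at an end). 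The arithmetic $\binom{n}{3}+(n-1)+\binom{n-1}{2}=\binom{n+1}{3}$ and the back-reading of the two equality conditions to pin down $P_n$ are both right. The only thing I would ask you to make explicit in a final write-up is the short verification of the layer-maximization step, which you currently defer.
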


Below we prove a result about the lower bound on the degree of the vertex with the smallest transmission in a TI tree.

\begin{lemma}\label{min_deg_3_lemma}
    Let $T$ be a TI tree  with $v\in V(T)$ having the minimum transmission. Then $d_T(v) \ge 3$.
\end{lemma}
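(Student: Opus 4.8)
The plan is to convert the minimality of $\Tr_T(v)$ into a statement about the sizes of the branches hanging at $v$, and then to exclude $d_T(v)\in\{1,2\}$ separately. First I would record the tree-specific form of Lemma~\ref{neighbors_lemma}: since $T$ is a tree, deleting an edge $uv$ splits $T$ into exactly two components, so $n_T(u,v)+n_T(v,u)=|T|$, and hence for every edge $uv$ we obtain $\Tr_T(u)-\Tr_T(v)=|T|-2\,n_T(u,v)$. Writing $n=|T|$ and using that $v$ is the (strictly) smallest-transmission vertex of the TI tree $T$, I would deduce $\Tr_T(u)>\Tr_T(v)$ for every neighbor $u$ of $v$, which rearranges to $n_T(u,v)<n/2$; that is, every branch hanging off $v$ contains fewer than $n/2$ vertices.

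Ruling out $d_T(v)=1$ is then immediate: a leaf $v$ has a unique neighbor $u$ with $n_T(u,v)=n-1$, and $n-1<n/2$ forces $n<2$, contradicting $|T|>1$. So $d_T(v)\ge 2$, and the real work is to exclude $d_T(v)=2$.

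Suppose $v$ has exactly two neighbors $u_1,u_2$, and set $a_i=n_T(u_i,v)$. Removing $v$ leaves precisely the two branches rooted at $u_1$ and $u_2$, so $a_1+a_2=n-1$, while the minimality bound gives $a_1,a_2<n/2$, i.e.\ $a_1,a_2\le\lfloor(n-1)/2\rfloor$. For even $n$ this yields $a_1+a_2\le n-2<n-1$, a contradiction obtained purely by counting. For odd $n$ the same inequality is tight, forcing $a_1=a_2=(n-1)/2$; but then $\Tr_T(u_1)-\Tr_T(v)=n-2a_1=n-2a_2=\Tr_T(u_2)-\Tr_T(v)$, so $u_1$ and $u_2$ share a transmission, contradicting that $T$ is TI. Either way $d_T(v)=2$ is impossible, and combined with the previous paragraph this gives $d_T(v)\ge 3$.

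I expect the only delicate point to be the odd case of the degree-two analysis: unlike the even case, it is not settled by the branch-size counting alone and requires invoking transmission irregularity a second time, in order to upgrade the forced equality $a_1=a_2$ into a genuine pair of distinct equal-transmission vertices. Everything else reduces to the bookkeeping identity $\Tr_T(u)-\Tr_T(v)=n-2\,n_T(u,v)$ valid in trees, so I would keep that identity front and center throughout.
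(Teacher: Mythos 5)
Your proof is correct and follows essentially the same route as the paper: both use Lemma~\ref{neighbors_lemma} to show that a minimum-transmission vertex of degree two would force its two branches to have equal size, whence its two neighbors would share a transmission, contradicting transmission irregularity. The only cosmetic difference is that you split the degree-two case by the parity of $n$, whereas the paper derives $n_x=n_y$ directly from the pair of inequalities $n_x+1>n_y$ and $n_y+1>n_x$; this changes nothing of substance.
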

\begin{proof} Clearly, $v$ cannot be a leaf. Now, suppose that $d_T(v) = 2$  with $N_T(v)=\{x,y\}$. Let $T_x$ and $T_y$ be the subtrees of $T-v$ containing $x$ and $y$, respectively, with $n(T_x)=n_x$ and $n(T_y)=n_y$. From Lemma \ref{neighbors_lemma}, $n_x+1>n_y$ and $n_y+1>n_x$ hold simultaneously, which yields that $n_x=n_y$. Therefore, $\Tr(x)=\Tr(y)$ follows from Lemma \ref{neighbors_lemma}, which contradicts the TI property of $T$. This completes the proof of the result.
\end{proof}

In \cite{AXU}, the following result is implicitly proved.
\begin{lemma}\label{compo} Let $T$ be a TI tree with $v\in V(T)$. If $T-v=\bigcup\limits_{k=1}^{t}T_k$ with $|T_k|=a_k$ for $k\in [t]$, then $a_i\neq a_j$ for any distinct $i,j\in [t]$.\end{lemma}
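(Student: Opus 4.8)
The plan is to argue by contradiction, applying Lemma \ref{neighbors_lemma} to the edges joining $v$ to its neighbour in each component. Write $n = |T|$, so that $n = 1 + \sum_{k=1}^{t} a_k$, and for each $k \in [t]$ let $u_k$ denote the unique neighbour of $v$ that lies in the component $T_k$. These vertices are pairwise distinct and all different from $v$, since the $T_k$ are the distinct components of $T - v$. I would then suppose, for contradiction, that $a_i = a_j$ for some distinct $i, j \in [t]$.

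The key observation is that deleting the edge $v u_k$ splits $T$ into exactly two pieces: the side containing $u_k$, which is precisely $T_k$, and the side containing $v$, which is everything else. Consequently $n_T(u_k, v) = |T_k| = a_k$ and $n_T(v, u_k) = n - a_k$. Feeding this into Lemma \ref{neighbors_lemma} applied to the edge $v u_k$ yields
\[
    \Tr_T(u_k) - \Tr_T(v) = n_T(v, u_k) - n_T(u_k, v) = (n - a_k) - a_k = n - 2 a_k,
\]
so that $\Tr_T(u_k) = \Tr_T(v) + n - 2 a_k$ for every $k \in [t]$. Specialising to $k = i$ and $k = j$ and using the assumption $a_i = a_j$, I obtain $\Tr_T(u_i) = \Tr_T(u_j)$. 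Since $u_i$ and $u_j$ are distinct vertices, this contradicts the hypothesis that $T$ is transmission irregular, which completes the argument.

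I do not expect any genuine obstacle here: the whole proof hinges on choosing the right edges to which Lemma \ref{neighbors_lemma} is applied, after which the branch size $a_k$ appears directly as $n_T(u_k, v)$, so equal branch sizes immediately force equal transmissions at the corresponding neighbours of $v$. The only points that must be checked with care are that removing the edge $v u_k$ isolates precisely the component $T_k$ (guaranteeing $n_T(u_k, v) = a_k$ rather than some other count) and that $u_i \neq u_j$, both of which follow at once from $T$ being a tree and $T_i, T_j$ being distinct components of $T - v$.
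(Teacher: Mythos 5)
Your proof is correct: applying Lemma \ref{neighbors_lemma} to each edge $vu_k$ gives $\Tr_T(u_k)=\Tr_T(v)+n-2a_k$, so equal component sizes force equal transmissions at distinct neighbours of $v$, contradicting transmission irregularity. The paper does not reproduce a proof of this lemma (it attributes it to the reference \cite{AXU}), but your argument is precisely the standard one and is the same technique the paper itself uses via Lemma \ref{neighbors_lemma} in the proof of Lemma \ref{min_deg_3_lemma}; no gaps.
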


\begin{lemma}\label{arm_lemma}
    Let $T_1$ be a non-path tree of order $\ell+1 >1$ with $v_1\in V(T_1)$, and let $T_2$ be a nontrivial tree with $v_2\in V(T_2)$. Suppose that $T$ is the tree obtained by identifying $v_1$ of $T_1$ and $v_2$ of $T_2$ with the new vertex labeled as $v$, and $T'$ is the tree obtained by identifying $v_2$ of $T_2$ with an endpoint of $P_{\ell+1}$, where the new vertex is labeled as $v$. Then $W(T) < W(T')$.
\end{lemma}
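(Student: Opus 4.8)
The plan is to evaluate both Wiener indices through the fusion formula of Lemma~\ref{wiener_fusion_lemma} and to compare them summand by summand. Write $w$ for the endpoint of $P_{\ell+1}$ identified with $v_2$. Applying Lemma~\ref{wiener_fusion_lemma} to $T$ (taking $G_1 = T_1$, $G_2 = T_2$) and to $T'$ (taking $G_1 = P_{\ell+1}$, $G_2 = T_2$), and using that $|T_1| = |P_{\ell+1}| = \ell+1$, I observe that the summands $W(T_2)$ and $\ell\,\Tr_{T_2}(v_2)$ are common to both expressions. Subtracting therefore leaves
\[
    W(T') - W(T) = \bigl[\,W(P_{\ell+1}) - W(T_1)\,\bigr] + (|T_2|-1)\,\bigl[\,\Tr_{P_{\ell+1}}(w) - \Tr_{T_1}(v_1)\,\bigr].
\]

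Next I would show that both bracketed quantities are nonnegative, the first strictly so. For the first bracket, Lemma~\ref{wiener_path_lemma} applied to the order-$(\ell+1)$ tree $T_1$ gives $W(T_1) \le W(P_{\ell+1})$ with equality only when $T_1 \cong P_{\ell+1}$; since $T_1$ is a non-path tree, $W(P_{\ell+1}) - W(T_1) > 0$. For the second bracket, the factor $|T_2|-1 \ge 1$ is positive because $T_2$ is nontrivial, so it remains to prove the transmission comparison $\Tr_{P_{\ell+1}}(w) \ge \Tr_{T_1}(v_1)$.

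This transmission comparison is the heart of the argument: it amounts to the claim that an endpoint of a path has the largest transmission among all vertices of all trees of a given order. To prove $\Tr_{T_1}(v_1) \le \binom{\ell+1}{2}$, I would use the tail-sum identity $\Tr_{T_1}(v_1) = \sum_{d \ge 1} N_{\ge d}$, where $N_{\ge d}$ is the number of vertices at distance at least $d$ from $v_1$. Whenever $N_{\ge d} \ge 1$, the unique path from $v_1$ to a vertex at distance at least $d$ passes through $d$ distinct vertices at distances $0, 1, \dots, d-1$; hence at least $d$ of the $\ell+1$ vertices lie at distance strictly less than $d$, which gives $N_{\ge d} \le (\ell+1) - d$ for every $d \ge 1$. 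Summing over $d = 1, \dots, \ell$ (beyond which $N_{\ge d} = 0$) yields $\Tr_{T_1}(v_1) \le \sum_{d=1}^{\ell}(\ell+1-d) = \binom{\ell+1}{2}$, which is precisely $\Tr_{P_{\ell+1}}(w)$, since the distances from $w$ to the other vertices of $P_{\ell+1}$ are exactly $1, 2, \dots, \ell$.

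Putting the two estimates together gives $W(T') - W(T) > 0$, that is, $W(T) < W(T')$. I do not anticipate a serious obstacle; the one step meriting care is the bound $N_{\ge d} \le (\ell+1)-d$, where the tree structure must be invoked to guarantee a full chain of intermediate distances, rather than the final arithmetic. It is worth noting that the non-path hypothesis on $T_1$ enters only to make the first bracket strictly positive, whereas the transmission comparison in the second bracket holds for every tree of the given order.
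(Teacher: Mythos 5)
Your proof is correct and follows essentially the same route as the paper: both apply Lemma~\ref{wiener_fusion_lemma} twice, cancel the common terms, and bound the two brackets via Lemma~\ref{wiener_path_lemma} and the maximality of a path-endpoint's transmission. The only difference is that the paper simply asserts the latter fact, whereas you supply a (correct) tail-sum proof of it.
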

\begin{proof}
    Let $|T| = |T'| = n$. By Lemma~\ref{wiener_fusion_lemma}, we have
    \begin{equation}\label{aux_1}
        W(T) = W(T_2) + W(T_1) + \ell \, \Tr_{T_2}(v) + (n - \ell - 1) \Tr_{T_1}(v)
    \end{equation}
    and
    \begin{equation}\label{aux_2}
        W(T') = W(T_2) + W(P_{\ell + 1}) + \ell \, \Tr_{T_2}(v) + (n - \ell - 1) \Tr_{P_{\ell + 1}}(v) .
    \end{equation}
    By combining \eqref{aux_1} and \eqref{aux_2}, we obtain
    \[
        W(T') - W(T) = \left[ W(P_{\ell + 1}) - W(T_1) \right] + (n - \ell - 1)\left[ \Tr_{P_{\ell + 1}}(v) - \Tr_{T_1}(v) \right] .
    \]
    Since $T_1$ is not a path  of order $\ell+1$, Lemma~\ref{wiener_path_lemma} implies $W(P_{\ell + 1}) - W(T_1) > 0$. Note that $\Tr_{P_{\ell + 1}}(v) - \Tr_{T_1}(v) \ge 0$ because $\Tr_{P_{\ell + 1}}(v)$ is the greatest transmission attainable by a vertex in a connected graph of order $\ell + 1$. Therefore, $W(T') - W(T) > 0$.
\end{proof}
Similarly, based on Lemma \ref{wiener_fusion_lemma}, we obtain the following result.
\begin{corollary}\label{majorization_lemma}
    Let $T$ be a nontrivial tree with $v\in V(T)$ such that $d_T(v) \ge 3$. Assume that there are two  distinct pendent paths $P$ and $P^*$ attached to $v$, of lengths $a_1$ and $a_2$, respectively, so that $a_1 \ge a_2$. Let $T'$ be the tree obtained from $T$ by removing the leaf from $P^*$ and attaching it to the leaf of $P$. Then $W(T) < W(T')$.
\end{corollary}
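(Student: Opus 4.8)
The plan is to mirror the proof of Lemma~\ref{arm_lemma}, expressing both $W(T)$ and $W(T')$ through a single application of the fusion formula in Lemma~\ref{wiener_fusion_lemma} so that every term except the transmission of $v$ cancels. First I would let $T_0$ denote the tree obtained from $T$ by deleting all vertices of $P$ and $P^*$ other than $v$, and set $m = |T_0|$. Because $d_T(v)\ge 3$, the vertex $v$ has a neighbour outside $P\cup P^*$, so $m\ge 2$. The key structural observation is that $v$ together with $P$ and $P^*$ induces a path on $a_1+a_2+1$ vertices in which $v$ sits at distance $a_2$ from one end; denote it $G$. Both $T$ and $T'$ are then obtained by identifying $v$ in $T_0$ with a distinguished vertex of a path on $a_1+a_2+1$ vertices: for $T$ this path is $G$ (with $v$ at distance $a_2$ from an end), and for $T'$ it is a path $Q$ of the same order in which $v$ has been shifted one step toward the longer arm, i.e.\ to distance $a_2-1$ from that end.

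Next I would apply Lemma~\ref{wiener_fusion_lemma} to each identification, using $T_0$ as the common second factor. Since $|G|=|Q|=a_1+a_2+1$ and $W(G)=W(Q)$ (both are paths of the same order, by Lemma~\ref{wiener_path_lemma} or a direct computation), and since the terms $W(T_0)$ and $\Tr_{T_0}(v)$ are identical in the two expansions, subtracting yields
\[
    W(T') - W(T) = (m-1)\bigl[\Tr_Q(v) - \Tr_G(v)\bigr].
\]
The whole problem thus reduces to comparing the transmission of $v$ in two paths of equal length that differ only in where $v$ is placed.

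Finally I would carry out that comparison. Using the fact that a vertex at distance $d$ from one end of a path on $L$ vertices has transmission $\binom{d+1}{2}+\binom{L-d}{2}$, with $L=a_1+a_2+1$ one obtains $\Tr_Q(v)-\Tr_G(v) = (a_1+1)-a_2 = a_1-a_2+1 \ge 1$, because $a_1\ge a_2$. Combined with $m-1\ge 1$, this gives $W(T')-W(T)\ge m-1>0$. I expect the main (though modest) obstacle to be organizational rather than computational: one must choose the decomposition so that $T$ and $T'$ share the same factor $T_0$ and so that the two attached factors are congruent paths, which is precisely what forces all terms but the transmission of $v$ to cancel. Once this is set up, the inequality is a one-line binomial calculation, and the only case requiring a second glance is $a_2=1$, where the shorter pendent path degenerates to length $0$ but the path $Q$ and the transmission formula above remain valid.
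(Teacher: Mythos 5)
Your proof is correct and is essentially the argument the paper intends: the paper omits the proof, stating only that the corollary follows ``similarly, based on Lemma~\ref{wiener_fusion_lemma},'' and your decomposition of $T$ and $T'$ into the common factor $T_0$ and two congruent paths differing only in the position of $v$ is the natural instantiation of that, with the final inequality $W(T')-W(T)=(m-1)(a_1-a_2+1)>0$ checking out (including the degenerate case $a_2=1$).
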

\begin{remark}
    The statement of Corollary \ref{majorization_lemma} is correct even if $a_2 = 1$. In this case, there will be one less pendent path attached to $v$ in tree $T'$ than in tree $T$.
\end{remark}
Below we provide a useful formula for computing the Wiener index of trees with a few branching vertices.
\begin{lemma}[\hspace{1sp}{\cite{DEG2001,DG1977}}]\label{branching lemma}
    Let $T$ be a nontrivial tree with branching vertices $w_1,w_2,\ldots, w_k$ such that $T-w_j=\bigcup\limits_{i=1}^{d_T(w_j)}T_{ij}$ where $|T_{ij}|=\ell_{ij}$ for $i\in [d_T(w_j)]$  and $j\in [k]$.  Then
    \[
        W(T) =\binom{n + 1}{3}-\sum\limits_{j=1}^{k}\sum\limits_{1\leq p<q<r\leq d_T(w_j)}\ell_{pj}\ell_{qj}\ell_{rj}.
    \]
\end{lemma}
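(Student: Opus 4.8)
The plan is to prove the identity by a double-counting argument over unordered triples of vertices, rather than by manipulating the fusion formula in Lemma~\ref{wiener_fusion_lemma}. The starting observation is that the subtracted quantity $\sum_{j}\sum_{1\le p<q<r\le d_T(w_j)}\ell_{pj}\ell_{qj}\ell_{rj}$ has a clean combinatorial meaning. For a fixed branching vertex $w_j$, the product $\ell_{pj}\ell_{qj}\ell_{rj}$ counts the ways to pick one vertex from each of three distinct components $T_{pj},T_{qj},T_{rj}$ of $T-w_j$; summing over $p<q<r$ counts all unordered triples $\{a,b,c\}$ of vertices lying in three pairwise-distinct components of $T-w_j$. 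Since each vertex of degree at most $2$ contributes nothing (there are no three components to choose from), I would first note that extending the outer sum from branching vertices to all vertices of $T$ changes nothing, so it suffices to analyse the latter.

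The key step is the median characterization. In a tree, every triple $\{a,b,c\}$ of distinct vertices has a unique \emph{median} vertex $m$ lying simultaneously on the paths $a\text{--}b$, $b\text{--}c$, and $a\text{--}c$. I would verify that a triple is counted by the double sum at some vertex $w$ precisely when its median $m=w$ satisfies $m\notin\{a,b,c\}$: if $a,b,c$ lie in three distinct components of $T-w$ then any path joining two of them must pass through $w$, so $w$ is their median; conversely, if the median $m$ is distinct from $a,b,c$, then no two of them can lie in the same component of $T-m$, forcing three distinct branches. Hence $\sum_{w}\sum_{p<q<r}\ell_{p}\ell_{q}\ell_{r}$ equals the number of triples whose median is \emph{not} one of the three chosen vertices.

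It then remains to count the complementary (``degenerate'') triples, namely those $\{a,b,c\}$ in which one vertex lies on the path between the other two. For such a triple, the two extreme vertices determine a pair $\{a,c\}$, and the middle vertex ranges over the $d_T(a,c)-1$ internal vertices of the $a\text{--}c$ path; this correspondence is a bijection, so the number of degenerate triples is
\[
    \sum_{\{a,c\}}\bigl(d_T(a,c)-1\bigr)=W(T)-\binom{n}{2}.
\]
Since the total number of triples is $\binom{n}{3}$, I would conclude that
\[
    \sum_{j}\sum_{1\le p<q<r\le d_T(w_j)}\ell_{pj}\ell_{qj}\ell_{rj}
    =\binom{n}{3}-\Bigl(W(T)-\binom{n}{2}\Bigr)
    =\binom{n+1}{3}-W(T),
\]
where the last equality is Pascal's identity $\binom{n}{3}+\binom{n}{2}=\binom{n+1}{3}$, and rearranging gives the claim.

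I expect the only genuinely delicate point to be the median characterization in the second paragraph: one must argue carefully that membership in three distinct components of $T-w$ is equivalent to $w$ being the median distinct from all three vertices, and that the degenerate/non-degenerate dichotomy is exhaustive and mutually exclusive. Once that bijection is pinned down, the remaining counting is routine, and the appearance of $W(T)$ through $\sum_{\{a,c\}}(d_T(a,c)-1)$ is immediate from the definition of the Wiener index.
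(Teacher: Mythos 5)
Your argument is correct. Note first that the paper does not prove this lemma at all --- it is quoted from the literature (Dobrynin--Entringer--Gutman and Doyle--Graver) --- so there is no in-paper proof to compare against; what you have written is essentially the classical derivation of this identity. The three pillars of your plan all hold up: (1) extending the outer sum to all vertices is harmless since a vertex of degree at most two yields an empty inner sum; (2) the median of a triple in a tree exists and is unique, and a triple $\{a,b,c\}$ is counted at $w$ exactly when $w$ is its median and $w\notin\{a,b,c\}$, so each non-degenerate triple is counted exactly once across all $w$; (3) the degenerate triples are in bijection with pairs $(\{a,c\},\,b)$ where $b$ is one of the $d_T(a,c)-1$ internal vertices of the $a$--$c$ path, giving the count $W(T)-\binom{n}{2}$, and the uniqueness of the median guarantees that a degenerate triple has exactly one ``middle'' vertex, so this bijection is well defined. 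The dichotomy degenerate/non-degenerate is exhaustive and exclusive for the same reason. A quick sanity check confirms the bookkeeping: for $T=S(2,1,1)$ of order $5$ one has $\binom{6}{3}-2\cdot1\cdot1=18=W(T)$. The only cosmetic remark is that your identity also re-proves $W(P_n)=\binom{n+1}{3}$ (Lemma~\ref{wiener_path_lemma}, equality case) as the special case where every triple is degenerate, which makes the appearance of $\binom{n+1}{3}$ as the ``path value'' transparent.
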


\section{Maximal trees of odd order}\label{sc_odd}

\ \indent In the present section we determine the extremal TI trees of any odd order attaining the maximum Wiener index. Our main result is provided in the following.

\begin{theorem}\label{odd_th}
    For every odd $n \ge 7$, there is a unique TI tree $T^*$ of order $n$ that attains the maximum Wiener index, with
    \begin{enumerate}[label=\textbf{(\roman*)}]
        \item $T^* \cong S\left( \frac{n - 1}{2}, \frac{n - 3}{2}, 1 \right)$, if $\{n - 2,n - 1\}\cap \mathcal{PS}=\emptyset$;
        \item $T^* \cong S\left( \frac{n - 1}{2}, \frac{n - 5}{2}, 2\right)$, if $\{n - 2,n - 1\}\cap \mathcal{PS}\neq\emptyset$ and $\{2n - 6, 2n - 2\}\cap\mathcal{PS}=\emptyset$;
        \item $T^* \cong C_{n - 2}\left( \frac{n - 1}{2}, \frac{n - 3}{2} + \sqrt{n - 2} \right)$, if $n - 2\in \mathcal{PS}$ and $\{2n - 6, 2n - 2\}\cap\mathcal{PS}\neq\emptyset$;
        \item $T^* \cong C_{n - 2}\left( \frac{n + 1}{2}, \frac{n - 3}{2} + \sqrt{n - 1} \right)$, if $n - 1,2n - 6\in\mathcal{PS}$.
    \end{enumerate}
\end{theorem}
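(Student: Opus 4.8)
The plan is to turn the maximization into a minimization and then exploit the vertex of smallest transmission. By Lemma~\ref{branching lemma} I can write $W(T) = \binom{n+1}{3} - R(T)$, where $R(T) = \sum_{j}\sum_{p<q<r}\ell_{pj}\ell_{qj}\ell_{rj}$ is a nonnegative ``defect'' supported on the branching vertices, so maximizing $W$ over TI trees is the same as minimizing $R$. Let $w$ be the vertex of minimum transmission. Lemma~\ref{min_deg_3_lemma} gives $d_T(w)\ge 3$, and Lemma~\ref{compo} gives that the components $T_1,\dots,T_d$ of $T-w$ have pairwise distinct orders $\ell_1>\dots>\ell_d$. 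Applying Lemma~\ref{neighbors_lemma} to the edge joining $w$ to its neighbour $u$ in the largest component yields $\Tr(w)-\Tr(u)=n_T(u,w)-n_T(w,u)=\ell_1-(n-\ell_1)=2\ell_1-n\le 0$, i.e. $\ell_1\le\frac{n-1}{2}$ (this is where $n$ odd enters). Minimizing the single symmetric term contributed by $w$ subject to $\sum\ell_i=n-1$, distinct $\ell_i$, and $\ell_1\le\frac{n-1}{2}$ then gives $R(T)\ge\frac{(n-1)(n-3)}{4}$, with equality forcing $d=3$, the multiset $\{\frac{n-1}{2},\frac{n-3}{2},1\}$, and no other branching vertex; that is, $T\cong S\!\left(\frac{n-1}{2},\frac{n-3}{2},1\right)$. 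Lemmas~\ref{arm_lemma} and~\ref{majorization_lemma} are what justify that, at equality, each component is a bare path.

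Next I would compute transmissions explicitly to decide when the extremal candidates are actually TI. For $S(a,b,c)$ with centre $O$ one obtains the clean formula $\Tr(A_i)=i^2+(b+c-a)\,i+\Tr(O)$ together with the two symmetric versions, so the TI property becomes the absence of coincidences among the quadratics $i\mapsto i^2+\lambda i$, i.e. a list of ``not a perfect square'' conditions. For $S\!\left(\frac{n-1}{2},\frac{n-3}{2},1\right)$ the two long arms carry $\lambda=0$ and $\lambda=2$ while the lone leaf has value $n-2$, so the only possible collisions are $i^2=n-2$ and $(j+1)^2=n-1$; hence this tree is TI exactly when $\{n-2,n-1\}\cap\mathcal{PS}=\emptyset$, which is case (i). The same computation for $S\!\left(\frac{n-1}{2},\frac{n-5}{2},2\right)$ produces obstructions at $n,\,n-4,\,2n-6,\,2n-2$, and here I would invoke the elementary facts that, for $n\ge7$, two perfect squares cannot differ by at most $4$ and $2m^2$ is never a square: under the hypotheses of cases (ii)--(iv) these automatically kill the $n$ and $n-4$ obstructions, leaving precisely $\{2n-6,2n-2\}\cap\mathcal{PS}=\emptyset$. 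An analogous (if longer) transmission computation on the caterpillars verifies that the trees in (iii) and (iv) are TI under their stated conditions.

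The hard part is the refined optimization behind cases (ii)--(iv). A value analysis shows that the only component multisets at $w$ yielding $R\le\frac{(n-1)(n-5)}{2}$ are $\{\frac{n-1}{2},\frac{n-3}{2},1\}$ and $\{\frac{n-1}{2},\frac{n-5}{2},2\}$, and that $d\ge4$ is always strictly worse; so a TI tree beating $S\!\left(\frac{n-1}{2},\frac{n-5}{2},2\right)$ must be a caterpillar whose dominant branch already realizes defect $\frac{(n-1)(n-3)}{4}$, with a second pendant leaf on the spine supplying the rest. The genuine obstacle is that such a second leaf can be placed near the end of the spine, giving total defect far below $\frac{(n-1)(n-5)}{2}$; these abundant low-defect caterpillars are eliminated \emph{only} by their transmission arithmetic. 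Carrying out the caterpillar computation, the TI requirement forces a quadratic condition whose unique admissible root is $q=\frac{n-3}{2}+\sqrt{n-2}$ (respectively $\sqrt{n-1}$), which pushes the second branch far enough in that its contribution is exactly $\frac{(n-1)(n-3)}{4}+\sqrt{n-2}-(n-2)$, so that $R(T_3)=R(T_2)+1+\sqrt{n-2}$. Ordering the candidates by $R(T_1)<R(T_2)<R(T_3)$ and selecting, in each parameter regime, the first that survives the non-square conditions then yields all four cases with uniqueness. I expect the bulk of the work, and the main risk of missed cases, to lie in this elimination of the low-defect caterpillars through their transmission coincidences.
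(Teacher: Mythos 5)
Your skeleton matches the paper's own proof: the minimum-transmission vertex $v$ has degree at least three (Lemma~\ref{min_deg_3_lemma}), the components of $T-v$ have distinct sizes (Lemma~\ref{compo}) with the largest at most $\frac{n-1}{2}$, case (i) follows by collapsing to $S\bigl(\frac{n-1}{2},\frac{n-3}{2},1\bigr)$ via Lemmas~\ref{arm_lemma} and~\ref{majorization_lemma}, and the fallback chain through $S\bigl(\frac{n-1}{2},\frac{n-5}{2},2\bigr)$ to the caterpillars is the right one; recasting everything through the defect $R(T)$ of Lemma~\ref{branching lemma} is cosmetic. However, there is a genuine gap in how you pin down the caterpillars. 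Your claim that ``the TI requirement forces a quadratic condition whose unique admissible root is $q=\frac{n-3}{2}+\sqrt{n-2}$'' misdescribes the mechanism: the coincidence $\Tr(y_j)=\Tr(z_1)$ at $j=\sqrt{n-2}$ only yields the \emph{upper bound} $d_T(w,v)\le \sqrt{n-2}-1$ on the nearest branching vertex $w$ of the long arm; every smaller distance is equally admissible for that coincidence, and it is Wiener-maximality that pushes $w$ to the boundary. One must then verify separately that the boundary caterpillar is itself TI --- the content of Lemmas~\ref{odd_3_ti_lemma} and~\ref{odd_4_ti_lemma}, which require real number theory (quadratic residues modulo $8$, insolubility of $(j+1)^2=2k^2$, etc.). This verification is not automatic and cannot be waved away: in the even-order analogue it genuinely fails for some $n$, forcing a retreat to the next-best position (compare cases (ii) and (iii) of Theorem~\ref{even_th}).

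The second omission is that in cases (iii) and (iv) the winning type-A caterpillar must also beat every type-B tree other than the (non-TI) starlike one. A type-B caterpillar with its second branching vertex far out on the long arm has small defect, and eliminating it needs its own transmission bound $d_T(w,v)\le\sqrt{2n-6}-1$ (from the coincidence with the endpoint of the length-two pendent path) followed by a separate Wiener comparison against $C_{n-2}\bigl(\frac{n-1}{2},\frac{n-3}{2}+\sqrt{n-2}\bigr)$; this is Proposition~\ref{odd_th_cases_3_4} and is absent from your plan. Finally, the inequalities in your ``value analysis'' (e.g.\ ruling out the multiset $\{\frac{n-1}{2},\frac{n-7}{2},3\}$ against the caterpillar's defect $R(T_2)+1+\sqrt{n-2}$) only hold for $n\ge 17$, while $n=11$ already falls under case (iii); that instance must be settled separately, which the paper does by exhaustive search.
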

\begin{remark}
    Observe that $n - 2$ and $n - 1$ cannot be a perfect square at the same time. The numbers $n - 1$ and $2n - 2$ also cannot be a perfect square at the same time. Therefore, Theorem \ref{odd_th} solves the TI tree Wiener index maximization problem for any odd order $n \ge 7$.
\end{remark}
\begin{remark}
    It is trivial to verify, e.g., by using the program \texttt{gentreeg} from the package \texttt{nauty} \cite{McKayPip2014}, that there is no TI tree of odd order $n < 7$.
\end{remark}

We begin by observing the following reformulation of an earlier result.

\begin{lemma}[\hspace{1sp}{\cite{AlKl2018}}]\label{odd_1_ti_lemma}
    For any odd $n \ge 5$, $S(\frac{n-1}{2}, \frac{n-3}{2}, 1)$ is TI if and only if $\{n - 2,n - 1\}\cap \mathcal{PS}=\emptyset$.
\end{lemma}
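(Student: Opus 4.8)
The plan is to compute every transmission explicitly, measured relative to the transmission of the central vertex, and to show that the perfect-square conditions on $n-2$ and $n-1$ are exactly the conditions under which a coincidence of transmissions is forced. First I would fix notation: writing $a = \frac{n-1}{2}$ and $b = \frac{n-3}{2}$, so that $a = b+1$, the tree $T = S\!\left(a,b,1\right)$ consists of a central vertex $c$, a long arm $c = u_0, u_1, \ldots, u_a$, a second arm $c = w_0, w_1, \ldots, w_b$, and a single pendent leaf $z$ adjacent to $c$; its order is $2 + a + b = n$, as required.

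The engine of the argument is to telescope the transmission along each arm using Lemma~\ref{neighbors_lemma}. Crossing the edge $u_j u_{j+1}$ outward, the component containing $u_{j+1}$ has exactly $a - j$ vertices, so Lemma~\ref{neighbors_lemma} gives $\Tr(u_{j+1}) - \Tr(u_j) = n - 2(a-j) = 2j+1$ (using $n - 2a = 1$), and together with $\Tr(u_1) - \Tr(c) = 1$ this sums to the clean closed form
\[
    \Tr(u_j) - \Tr(c) = j^2, \qquad 0 \le j \le a .
\]
The same telescoping on the second arm, where $n - 2b = 3$, yields $\Tr(w_{j+1}) - \Tr(w_j) = 2j+3$ and hence
\[
    \Tr(w_j) - \Tr(c) = (j+1)^2 - 1, \qquad 0 \le j \le b ,
\]
while the pendent edge gives $\Tr(z) - \Tr(c) = (n-1) - 1 = n-2$ directly.

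With these formulas in hand, verifying the TI property reduces to comparing the four value sets $\{\, j^2 : 1 \le j \le a \,\}$, $\{\, (j+1)^2 - 1 : 1 \le j \le b \,\}$, $\{0\}$, and $\{n-2\}$. The values within each arm are strictly increasing, hence distinct; an arm-$A$ value can never equal an arm-$B$ value, since $j^2 = (k+1)^2 - 1$ would force two positive squares to differ by $1$, which factors as $(k+1-j)(k+1+j) = 1$ and has no solution with $j \ge 1$; and $0$ is strictly below every other value. Thus the only coincidences that can occur are between $z$ and an arm vertex, namely $\Tr(z) = \Tr(u_j)$ exactly when $n-2 = j^2$, and $\Tr(z) = \Tr(w_j)$ exactly when $n-1 = (j+1)^2$. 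This pinpoints the failure of the TI property as precisely the event $\{n-2, n-1\} \cap \mathcal{PS} \neq \emptyset$.

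The one step requiring genuine care — and the only place the hypothesis $n \ge 5$ is used — is confirming that a perfect-square value of $n-2$ or of $n-1$ really corresponds to an honest arm vertex, i.e.\ that the resulting index lies in its admissible range. For this I would check $\sqrt{n-2} \le a$ and $\sqrt{n-1} \le b+1$, which reduce respectively to $(n-3)^2 \ge 0$ and $n \ge 5$, so the colliding index is present whenever the square condition holds and the equivalence is clean in both directions. Assembling these observations gives the claimed characterization. I expect the derivation of the two quadratic closed forms to be the crux; once the squares appear, the collision analysis and the range check are routine.
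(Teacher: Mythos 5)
Your proof is correct. The paper itself gives no argument for this lemma---it is imported as a citation of Alizadeh and Klav\v{z}ar---but your method (telescoping transmissions along each arm via Lemma~\ref{neighbors_lemma} to get the offsets $j^2$, $(j+1)^2-1$ and $n-2$ relative to the centre, ruling out cross-arm collisions by the factorization $(k+1-j)(k+1+j)=1$, and checking that the colliding index lies in range) is exactly the technique the paper deploys for the analogous verifications in Lemmas~\ref{odd_2_ti_lemma}--\ref{odd_4_ti_lemma}, so nothing further is needed.
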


With this in mind, we settle case (i) of Theorem \ref{odd_th} through the following proposition.

\begin{proposition}\label{odd_th_case_1}
    For any odd $n \ge 7$ with $\{n - 2,n - 1\}\cap \mathcal{PS}=\emptyset$, $S\left( \frac{n - 1}{2}, \frac{n - 3}{2}, 1 \right)$ is the unique TI tree of order $n$ that attains the maximum Wiener index.
\end{proposition}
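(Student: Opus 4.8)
The plan is to translate the Wiener index into a ``branching defect'' via Lemma~\ref{branching lemma}, which expresses $W(T) = \binom{n+1}{3} - D(T)$ where $D(T) = \sum_{w} \sum_{p<q<r} \ell_{pw}\ell_{qw}\ell_{rw}$ is summed over all branching vertices $w$ of $T$. Maximizing $W$ over TI trees of order $n$ is therefore equivalent to minimizing $D(T)$, and a direct computation gives $D\!\left(S(\tfrac{n-1}{2},\tfrac{n-3}{2},1)\right) = \tfrac{n-1}{2}\cdot\tfrac{n-3}{2}\cdot 1 = \tfrac{(n-1)(n-3)}{4}$. Since this tree is TI by Lemma~\ref{odd_1_ti_lemma} under the stated hypothesis, it suffices to prove that every TI tree $T$ of order $n$ satisfies $D(T) \ge \tfrac{(n-1)(n-3)}{4}$, with equality only for the claimed tree. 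Here I write $M = \tfrac{n-1}{2}$, so that the target is $M(M-1)$ and $n-1 = 2M$.

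First I would localize to a single branching vertex. Let $v$ be the vertex of minimum transmission, which is unique by the TI property and satisfies $d := d_T(v) \ge 3$ by Lemma~\ref{min_deg_3_lemma}; in particular $v$ is a branching vertex, so $D(T) \ge \sum_{p<q<r}\ell_p\ell_q\ell_r$, where $\ell_1 > \ell_2 > \cdots > \ell_d$ are the orders of the components of $T-v$. These are pairwise distinct by Lemma~\ref{compo} and sum to $n-1$. Moreover, combining Lemma~\ref{neighbors_lemma} with the minimality of $\Tr_T(v)$ shows that no component of $T-v$ exceeds half of the remaining vertices: if a neighbour $u$ of $v$ lay in a component of order more than $M$, then $n_T(u,v) > n_T(v,u)$, giving $\Tr_T(u) < \Tr_T(v)$, a contradiction. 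Hence $\ell_i \le M$ for every $i$.

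The heart of the matter is then the combinatorial inequality: for any $m \ge 3$ distinct positive integers $\ell_1 > \cdots > \ell_m$ with each $\ell_i \le M$ and $\sum_i \ell_i = 2M$, one has $\sum_{p<q<r}\ell_p\ell_q\ell_r \ge M(M-1)$, with equality exactly when $m=3$ and $(\ell_1,\ell_2,\ell_3)=(M,M-1,1)$. I would prove this in two stages. For $m=3$, a smoothing argument does it: for fixed sum the product of three distinct parts strictly decreases under the exchanges $(\ell_1,\ell_3)\mapsto(\ell_1+1,\ell_3-1)$ and $(\ell_2,\ell_3)\mapsto(\ell_2+1,\ell_3-1)$, which preserve distinctness, so the cap $\ell_1\le M$ drives the minimizer to $\ell_1=M$, $\ell_3=1$, $\ell_2=M-1$. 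For $m\ge 4$, I would isolate the two largest parts and use $\sum_{p<q<r}\ell_p\ell_q\ell_r \ge \ell_1\ell_2(\ell_3+\cdots+\ell_m) = \ell_1\ell_2(2M-\ell_1-\ell_2)$; since $\ell_1\le M$ forces $\ell_2 \ge 2M-\ell_1-\ell_2 \ge M - (2M-\ell_1-\ell_2)$, the two leading parts are large enough that this estimate already strictly exceeds $M(M-1)$. I expect this second stage to be the main obstacle: making the $m\ge 4$ bound watertight requires respecting distinctness, the upper cap, and — in the regime where the two leading parts are not large — the arithmetic fact that small leading parts force all remaining parts to be small and hence bound the total order $n$, which is exactly where a clean uniform estimate is delicate.

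Finally, I would assemble the equality case. Chaining the two inequalities yields $D(T) \ge \sum_{p<q<r}\ell_p\ell_q\ell_r \ge M(M-1)$, hence $W(T) \le \binom{n+1}{3} - \tfrac{(n-1)(n-3)}{4} = W\!\left(S(\tfrac{n-1}{2},\tfrac{n-3}{2},1)\right)$. Equality in the first inequality requires that $v$ be the \emph{only} branching vertex of $T$, so that every component of $T-v$ is a path and $T$ is starlike; equality in the second forces $d=3$ with component orders $\tfrac{n-1}{2},\tfrac{n-3}{2},1$. Together these identify $T \cong S(\tfrac{n-1}{2},\tfrac{n-3}{2},1)$, which is TI by Lemma~\ref{odd_1_ti_lemma}. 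This establishes both the maximality of its Wiener index and the uniqueness of the extremal tree.
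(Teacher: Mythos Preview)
Your overall framework is sound and genuinely different from the paper's argument. The paper never touches the branching formula here: instead it takes the minimum-transmission vertex $v$, applies Lemma~\ref{arm_lemma} to replace each component of $T-v$ by a pendent path (strictly increasing $W$ unless $T$ is already starlike), and then repeatedly applies Corollary~\ref{majorization_lemma} to the arms of the resulting starlike tree, pushing a leaf from a shorter arm to a longer one until the configuration $\left(\tfrac{n-1}{2},\tfrac{n-3}{2},1\right)$ is reached. Your route via $W(T)=\binom{n+1}{3}-D(T)$ and the single-vertex lower bound $D(T)\ge e_3(\ell_1,\dots,\ell_d)$ is cleaner for the equality analysis (the ``only branching vertex'' step drops out immediately), but it trades the two ready-made graph lemmas for a bare combinatorial inequality that you must prove from scratch. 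Note that Corollary~\ref{majorization_lemma}, read through Lemma~\ref{branching lemma}, is exactly the statement that the move $(\ell_i,\ell_j)\mapsto(\ell_i+1,\ell_j-1)$ with $\ell_i\ge\ell_j$ strictly decreases $e_3$; so you could simply cite it to finish your $m\ge 4$ stage instead of arguing directly.

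There is, however, a genuine error in your $m\ge4$ sketch. The chain ``$\ell_1\le M$ forces $\ell_2 \ge 2M-\ell_1-\ell_2$'' asserts $\ell_2\ge S$, and this is false: for $M=10$ the partition $(10,4,3,2,1)$ has $\ell_2=4<6=S$. What $\ell_1\le M$ actually gives is only $\ell_2+S\ge M$, and without further input the product $\ell_1\ell_2 S$ can be as small as order $M$, far below $M(M-1)$. The missing ingredient is the distinctness bound $S=\sum_{i\ge 3}\ell_i\le\binom{\ell_2}{2}$, which combined with $\ell_2+S\ge M$ forces $\binom{\ell_2+1}{2}\ge M$; once you feed this into the endpoint analysis of the concave map $\ell_1\mapsto \ell_1(2M-\ell_2-\ell_1)$ on $[\ell_2+1,M]$, the bound $\ell_1\ell_2 S\ge 3M(M-3)>M(M-1)$ for $M\ge 5$ does go through (and $M\le 4$ admits no $m\ge4$ partition at all). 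So your target inequality is true, but the justification you wrote does not establish it.
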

\begin{proof}
    Let $T$ be a TI tree of order $n$ with $v\in V(T)$ having the minimum transmission. By Lemma \ref{min_deg_3_lemma}, we have $d_T(v) \ge 3$. Let $k = d_T(v)$ and let $T_1, T_2, \ldots, T_k$ be the components of $T - v$. Also, for each $i \in [k]$, let $a_i = |T_i|$. Since $T$ is TI,  by Lemma \ref{compo}, we necessarily have $a_i \neq a_j$ for any distinct $i, j \in [k]$. Furthermore, we also have $a_i < \frac{n}{2}$ for any $i \in [k]$.  Otherwise, we have $a_i\geq \frac{n}{2}$ for some $i\in [t]$. Then there is a vertex $u\in V(T_i)$ with the minimum transmission, which contradicts the choice of $v$ in $T$. Thus, we may assume without loss of generality that $\frac{n}{2} > a_1 > a_2 > \cdots > a_k \ge 1$. Since $n$ is odd, we observe that $a_1 \le \frac{n - 1}{2}$ and $a_2 \le \frac{n - 3}{2}$.

    By repeatedly applying Lemma \ref{arm_lemma}, we have $W(T) \le W(S(a_1, a_2, \ldots, a_k))$ with equality holding if and only if $T \cong S(a_1, a_2, \ldots, a_k)$. Now, by repeatedly applying Corollary \ref{majorization_lemma},  we have
    \[
        W(S(a_1, a_2, \ldots, a_k)) \le W\left(S\left(\tfrac{n - 1}{2}, \tfrac{n - 3}{2}, 1\right)\right),
    \]
    with equality holding if and only if $k = 3$ and $(a_1, a_2, a_3) = \left(\frac{n - 1}{2}, \frac{n - 3}{2}, 1\right)$. This means that for any TI tree $T$ of order $n$ such that $T \not\cong S\left( \frac{n - 1}{2}, \frac{n - 3}{2}, 1 \right)$, we have $W(T) < W\left(S\left( \frac{n - 1}{2}, \frac{n - 3}{2}, 1 \right)\right)$. The result now follows from Lemma \ref{odd_1_ti_lemma}, which implies that $S\left( \frac{n - 1}{2}, \frac{n - 3}{2}, 1 \right)$ is TI.
\end{proof}
\begin{remark}
    Note that while applying the constructions from Lemma \ref{arm_lemma} and Corollary \ref{majorization_lemma} to obtain the maximum TI tree $S\left( \frac{n - 1}{2}, \frac{n - 3}{2}, 1 \right)$, the intermediate trees need not be TI. It only matters that the last one $S\left( \frac{n - 1}{2}, \frac{n - 3}{2}, 1 \right)$ is a  TI tree of the same order $n$.
\end{remark}

The \texttt{SageMath} \cite{SageMath} script given in Appendix \ref{sage_script} verifies that $C_9(5, 7)$ is indeed the unique TI tree of order $11$ that attains the maximum Wiener index, in accordance with case (iii) of Theorem~\ref{odd_th}. Thus, to complete the proof, we only need to deal with the TI trees of odd order $n \ge 17$ such that $n - 2$ or $n - 1$ is a perfect square. We proceed by investigating the TI property of the trees appearing in Theorem \ref{odd_th} as potential solutions.

\begin{lemma}\label{odd_2_ti_lemma}
    For any odd $n \ge 7$, the tree $S\left( \frac{n - 1}{2}, \frac{n - 5}{2}, 2 \right)$ is TI if and only if  $\mathcal{PS}\cap\{n - 4, n, \linebreak 2n-6, 2n- 2\} =\emptyset$.
\end{lemma}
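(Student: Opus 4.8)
The plan is to exploit the high symmetry of the starlike tree by measuring every transmission \emph{relative to the central vertex} $v$. Label the three pendent paths as branches with $a_1 = \frac{n-1}{2}$, $a_2 = \frac{n-5}{2}$ and $a_3 = 2$ vertices, so that $a_1 + a_2 + a_3 = n - 1$. By applying Lemma~\ref{neighbors_lemma} repeatedly along a branch, stepping outward from $v$ one edge at a time, the quantity $n_T(\cdot,\cdot) - n_T(\cdot,\cdot)$ grows by a fixed arithmetic increment; summing these increments, I would show that the vertex lying at distance $d$ from $v$ on the branch with $a_i$ vertices satisfies
\[
    \Tr(w) - \Tr(v) = d\,(n - 2a_i + d - 1) =: f_i(d), \qquad 1 \le d \le a_i .
\]
Because each $a_i < \frac{n}{2}$, one checks $f_i(d+1) - f_i(d) = n - 2a_i + 2d > 0$, so $f_i$ is strictly increasing and no two transmissions coincide within a single branch. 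Hence $T$ fails to be TI exactly when two entries of the list
\[
    \{0\} \cup \{f_1(d)\}_{d=1}^{(n-1)/2} \cup \{f_2(d)\}_{d=1}^{(n-5)/2} \cup \{f_3(1), f_3(2)\}
\]
agree, where the leading $0$ records the center itself.

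Substituting the branch sizes yields the clean forms $f_1(d) = d^2$, $f_2(d) = (d+2)^2 - 4$, $f_3(1) = n - 4$ and $f_3(2) = 2n - 6$. I would then clear away the automatic non-collisions: every listed value is positive, so the center never clashes with a branch vertex; and an identity $d_1^2 = (d_2+2)^2 - 4$ forces $(d_2 + 2 - d_1)(d_2 + 2 + d_1) = 4$, which admits no positive-integer solution, so the two long branches never collide with each other. What remains is to match the two short-branch values against the long branches. The four equations $f_3(1) = d^2$, $f_3(1) = (d+2)^2 - 4$, $f_3(2) = d^2$ and $f_3(2) = (d+2)^2 - 4$ rearrange to $n - 4 = d^2$, $n = (d+2)^2$, $2n - 6 = d^2$ and $2n - 2 = (d+2)^2$, so each is solvable by an integer index precisely when $n - 4$, $n$, $2n - 6$ or $2n - 2$, respectively, is a perfect square. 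Finally $f_3(1) = f_3(2)$ only at $n = 2$, which is excluded.

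Assembling these equivalences gives exactly the stated criterion $\mathcal{PS} \cap \{n - 4,\, n,\, 2n - 6,\, 2n - 2\} = \emptyset$. The main technical point is not the algebra but the bookkeeping of admissibility ranges: each candidate collision furnishes an index $d$ (or $d + 2$) that must genuinely lie in the valid interval for its branch before it counts as a true coincidence. The upper-bound checks reduce to elementary inequalities in $n$; the only one that is not immediate for $n \ge 7$ is $\sqrt{2n - 2} \le \frac{n-1}{2}$, equivalent to $(n-1)(n-9) \ge 0$, arising from the $2n - 2 \in \mathcal{PS}$ case. I would verify that this inequality holds at every odd $n$ for which $2n - 2$ is a perfect square — the smallest being $n = 9$, where it is an equality — so that no collision is spuriously created or destroyed at the boundary. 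Likewise I would confirm the lower bounds ($d \ge 1$, equivalently $d + 2 \ge 3$) are met in the relevant cases. Once these range verifications are in place, the four equivalences combine to give the desired characterization.
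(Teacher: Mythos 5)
Your proposal is correct and follows essentially the same route as the paper: both compute the transmission of each branch vertex relative to the center via Lemma~\ref{neighbors_lemma}, obtaining the increments $i^2$, $j(j+4)=(j+2)^2-4$, $n-4$ and $2n-6$, rule out collisions between the two long branches via the impossibility of $(j+2)^2=i^2+4$, and reduce the remaining collisions to the four perfect-square conditions. Your explicit verification that the Diophantine solutions land inside the admissible index ranges (e.g.\ $(n-1)(n-9)\ge 0$ for the $2n-2$ case) is a detail the paper leaves implicit, but it does not change the argument.
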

\begin{proof}
    Let $z_0$ be the branching vertex of $S\left( \frac{n - 1}{2}, \frac{n - 5}{2}, 2 \right)$ and let its pendent paths be
    \[
        z_0 x_1 x_2 \cdots x_\frac{n - 1}{2}, \qquad z_0 y_1 y_2 \cdots y_\frac{n - 5}{2} \qquad \mbox{and} \qquad z_0 z_1 z_2.
    \]
    Moreover, let $\Tr(z_0) = \beta$. By repeatedly using Lemma \ref{neighbors_lemma}, we have $\Tr_G(z_1) = \beta + (n - 4)$ and $\Tr(z_2) = \beta + (2n - 6)$, alongside $\Tr(x_i) = \beta + i^2$ for any $i \in \left[\frac{n - 1}{2} \right]$ and $\Tr(y_j) = \beta + j(j + 4)$ for any $j \in \left[ \frac{n - 5}{2} \right]$. Observe that $\Tr(x_i) = \Tr(y_j)$ cannot hold. Indeed, if this is true for some $i \in \left[\frac{n - 1}{2} \right]$ and $j \in \left[ \frac{n - 5}{2} \right]$, we would get $(j + 2)^2 = i^2 + 4$, which cannot hold. Therefore, $S\left( \frac{n - 1}{2}, \frac{n - 5}{2}, 2 \right)$ is TI if and only if the two Diophantine equations $i^2 \in \{ n - 4, 2n - 6\},\, i \in \left[\frac{n - 1}{2}\right]$, and $j(j + 4) \in \{ n - 4, 2n - 6\},\, j \in \left[\frac{n - 5}{2}\right]$, have no solution. The former equation has a solution if and only if $n - 4$ or $2n - 6$ is a perfect square, while the latter has a solution if and only if $n$ or $2n - 2$ is a perfect square.
\end{proof}

\begin{lemma}\label{odd_3_ti_lemma}
    For any odd $n \ge 11$ with $n - 2\in\mathcal{PS}$, $C_{n - 2}\left(\frac{n - 1}{2}, \tfrac{n - 3}{2} + \sqrt{n - 2}\right)$ is TI.
\end{lemma}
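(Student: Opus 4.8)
Write $n - 2 = m^2$ with $m = \sqrt{n - 2}$; since $n$ is odd, $m$ is an odd integer with $m \ge 3$ (because $n \ge 11$). Following the notation in the definition of the ordinary caterpillar tree, $C_{n-2}\!\left(\tfrac{n-1}{2}, \tfrac{n-3}{2} + \sqrt{n-2}\right)$ consists of a base path $v_1 v_2 \cdots v_{m^2}$ together with a leaf $p$ attached at $v_{a_1}$ and a leaf $q$ attached at $v_{a_2}$, where $a_1 = \tfrac{m^2 + 1}{2}$ and $a_2 = a_1 + (m - 1) = \tfrac{m^2 + 2m - 1}{2}$. The plan is to compute every transmission explicitly relative to $\beta := \Tr(v_{a_1})$ and then verify that the resulting $n = m^2 + 2$ values are pairwise distinct.

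First I would apply Lemma~\ref{neighbors_lemma} along the base path. Deleting the edge $v_i v_{i+1}$ separates off $i$, $i + 1$, or $i + 2$ vertices on the $v_i$-side according to whether $i < a_1$, $a_1 \le i < a_2$, or $i \ge a_2$, so that $\Tr(v_{i+1}) - \Tr(v_i)$ equals $2i - m^2 - 2$, $2i - m^2$, or $2i - m^2 + 2$, respectively. These increments are negative for $i < a_1$ and positive for $i \ge a_1$, whence $\Tr$ is strictly decreasing along $v_1, \ldots, v_{a_1}$ and strictly increasing along $v_{a_1}, \ldots, v_{m^2}$, attaining its minimum $\beta$ at $v_{a_1}$. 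Telescoping these increments yields the closed forms $\Tr(v_{a_1 - t}) = \beta + t^2 + 2t$ for $1 \le t \le s$, then $\Tr(v_{a_1 + t}) = \beta + t^2$ for $1 \le t \le m - 1$, and $\Tr(v_{a_1 + t}) = \beta + t^2 + 2t - 2m + 2$ for $m \le t \le s$, where $s := \tfrac{m^2 - 1}{2}$. A single further application of Lemma~\ref{neighbors_lemma} to the pendant edges gives $\Tr(p) = \beta + m^2$ and $\Tr(q) = \Tr(v_{a_2}) + m^2 = \beta + (m - 1)^2 + m^2$.

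Strict monotonicity already forces the transmissions within the left arm and within the right arm (across the $t = m$ boundary as well) to be distinct, so it remains only to exclude collisions across the two arms and collisions involving the two leaves. Each such collision reduces to an elementary condition: a left-versus-near-right tie gives $(t + 1)^2 - u^2 = 1$, which is impossible; leaf $p$ would force $m^2 + 1$ or $(m + 1)^2 - 2$ to be a perfect square, each ruled out by sandwiching between consecutive squares; and leaf $q$ would force $2(m^2 - m + 1)$ or $2m^2$ to be a perfect square, ruled out because $m^2 - m + 1$ is odd (so cannot equal twice a square) and $\sqrt{2}$ is irrational, while $\Tr(p) \ne \Tr(q)$ reduces to $(m - 1)^2 \ne 0$. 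The main obstacle is the left-versus-far-right comparison, which after rearrangement becomes $(u + 1)^2 - (t + 1)^2 = 2(m - 1)$ with $m \le u \le s$ and $1 \le t \le s$; here I would factor the left-hand side, use the bound $u + t + 2 \ge m + 3$ to discard the case $u - t \ge 2$, and apply a parity check to discard $u - t = 1$, leaving no admissible solution. Combining these exclusions shows that all $m^2 + 2$ transmissions are distinct, so the tree is TI.
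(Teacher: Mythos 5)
Your proof is correct and follows essentially the same route as the paper's: both compute every transmission relative to the branching vertex $v_{a_1}$ via Lemma~\ref{neighbors_lemma}, arrive at the identical closed forms ($t^2+2t$, $t^2$, $t^2+2t-2m+2$, $m^2$, $2m^2-2m+1$), and eliminate the same collisions, including the key left-versus-far-right case via the factorization $(u-t)(u+t+2)=2(m-1)$ with the same parity-plus-size argument. The only cosmetic differences are your telescoping derivation of the transmission formulas and your ``twice an odd number is never a square'' argument for the left-arm/$q$ collision, where the paper instead uses a congruence modulo $8$.
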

\begin{proof}
    Let $k^2 = n - 2$ and let $z_0$ and $y_{k - 1}$ be the branching vertices of $C_{n - 2}\left(\frac{n - 1}{2}, \tfrac{n - 3}{2} + \sqrt{n - 2}\right)$ so that:
    \begin{enumerate}[label=\textbf{(\arabic*)}]
        \item $z_0 y_1 y_2 \cdots y_{k - 1}$ is the path between $z_0$ and $y_{k - 1}$;
        \item $z_0 z_1$ is the pendent path of length one from $z_0$;
        \item $z_0 x_1 x_2 \cdots x_\frac{n - 3}{2}$ is the pendent path of length $\frac{n - 3}{2}$ from $z_0$;
        \item $y_{k - 1} w$ is the pendent path of length one from $y_{k - 1}$;
        \item $y_{k - 1} y_k y_{k + 1} \cdots y_\frac{n - 3}{2}$ is the pendent path of length $\frac{n - 1}{2} - k$ from $y_{k - 1}$;
    \end{enumerate}
    see Figure \ref{fig_1}. Moreover, let $\Tr(z_0) = \beta$. By repeatedly using Lemma~\ref{neighbors_lemma}, we may conclude that $\Tr(z_1) = \beta + k^2$, $\Tr(x_i) = \beta + i(i + 2)$ for any $i \in \left[ \frac{n - 3}{2} \right]$ and $\Tr(y_j) = \beta + j^2$ for any $j \in [k - 1]$, alongside $\Tr(w) = \beta + (2k^2 - 2k + 1)$ and $\Tr(y_j) = \beta + j(j + 2) - 2k + 2$ for any $j \in \left\{ k, k + 1, \ldots, \frac{n - 3}{2} \right\}$. Since $\Tr(w) > \Tr(z_1)$, to prove that $C_{n - 2}\left(\frac{n - 1}{2}, \tfrac{n - 3}{2} + \sqrt{n - 2}\right)$ is TI, it suffices to show that:
    \begin{enumerate}[label=\textbf{(\alph*)}]
        \item $\Tr(x_i)\neq \Tr(y_j)$ for any $i \in \left[ \frac{n - 3}{2} \right]$ and $j \in \left[ \frac{n - 3}{2} \right]$;
        \item none of the $\Tr(x_i)$ values are equal to $\Tr(z_1)$ or $\Tr(w)$, for any $i \in \left[ \frac{n - 3}{2} \right]$;
        \item none of the $\Tr(y_j)$ values are equal to $\Tr(z_1)$ or $\Tr(w)$, for any $j \in \left[ \frac{n - 3}{2} \right]$.
    \end{enumerate}

\begin{figure}[t]
\centering
\begin{tikzpicture}[scale=1.0]
\tikzstyle{vertex}=[draw,circle,minimum size=4pt,inner sep=1pt,fill=black]
\tikzstyle{edge}=[draw,thick]
\tikzstyle{dedge}=[draw,thick,dashed]

\node[vertex, label=below:$z_0$] (0) at (0, 0) {};
\node[vertex, label=right:$z_1$] (1) at (0, 1) {};

\node[vertex, label=below:$y_1$] (2) at (1, 0) {};
\node[vertex, label=below:$y_2$] (3) at (2, 0) {};
\node[vertex, label=below:$y_{k-2}$] (4) at (4, 0) {};
\node[vertex, label=below:$y_{k-1}$] (5) at (5, 0) {};
\node[vertex, label=right:$w$] (6) at (5, 1) {};
\node[vertex, label=below:$y_{k}$] (7) at (6, 0) {};
\node[vertex, label=below:$y_\frac{n - 3}{2}$] (8) at (8, 0) {};

\node[vertex, label=below:$x_1$] (9) at (-1, 0) {};
\node[vertex, label=below:$x_2$] (10) at (-2, 0) {};
\node[vertex, label=below:$x_\frac{n - 3}{2}$] (11) at (-4, 0) {};

\path[edge] (0) -- (1);
\path[edge] (0) -- (2);
\path[edge] (2) -- (3);
\path[dedge] (3) -- (4);
\path[edge] (4) -- (5);
\path[edge] (5) -- (6);
\path[edge] (5) -- (7);
\path[dedge] (7) -- (8);
\path[edge] (0) -- (9);
\path[edge] (9) -- (10);
\path[dedge] (10) -- (11);
\end{tikzpicture}
\caption{The graph $C_{n - 2}\left(\frac{n - 1}{2}, \tfrac{n - 3}{2} + \sqrt{n - 2}\right)$ from Lemma \ref{odd_3_ti_lemma}.}
\label{fig_1}
\end{figure}
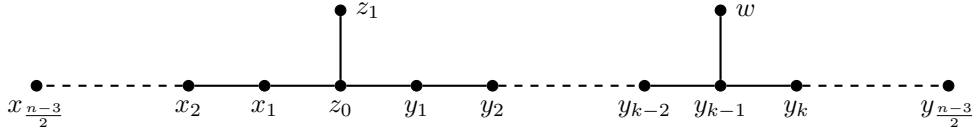

    Suppose that $\Tr(x_i) = \Tr(y_j)$ holds for some $i \in \left[ \frac{n - 3}{2} \right]$ and $j \in [k - 1]$. In this case, we get $i(i + 2) = j^2$. However, this is equivalent to $(i + 1)^2 = j^2 + 1$, which cannot be satisfied. Now, suppose that $\Tr(x_i) = \Tr(y_j)$ holds for some $i \in \left[ \frac{n - 3}{2} \right]$ and $j \in \{ k, k + 1, \ldots, \frac{n - 3}{2} \}$. In this case, we obtain $(j - i)(j + i + 2) = 2k - 2$. Clearly, $j - i$ must be even, hence $j - i \ge 2$. Since $j \ge k$ and $i \ge 1$, we obtain $(j - i)(j + i + 2) \ge 2(k + 3) = 2k + 6$, yielding a contradiction. These two contradictions imply that condition~(a) is satisfied.

    If  $\Tr(x_i) = \Tr(z_1)$ for some $i \in \left[ \frac{n - 3}{2} \right]$, we get a contradiction because $i(i + 2) = k^2$ is equivalent to $(i + 1)^2 = k^2 + 1$, which clearly cannot be true. On the other hand, if we have $\Tr(x_i) = \Tr(w)$ for some $i \in \left[ \frac{n - 3}{2} \right]$, we then obtain $i(i + 2) = 2k^2 - 2k + 1$, which is equivalent to $2(i + 1)^2 = (2k - 1)^2 + 3$. Since $i$ is odd, we have $8 \mid 2(i + 1)^2$, hence $(2k - 1)^2 \equiv 5 \pmod{8}$, which is impossible because $5$ is not a quadratic residue modulo $8$. Therefore, condition (b) holds.

    We trivially observe that $\Tr(y_j) = \Tr(z_1)$ cannot be true for any $j \in \left[ \frac{n - 3}{2} \right]$, since $\Tr(y_{k - 1}) < \Tr(z_1) < \Tr(y_k)$. Furthermore, $\Tr(y_j) = \Tr(w)$ does not hold for any $j \in [k - 1]$ because $\Tr(w) > \Tr(y_{k - 1})$. Suppose that $\Tr(y_j) = \Tr(w)$ for some $j \in \left\{ k, k + 1, \ldots, \frac{n - 3}{2} \right\}$. In this case, we get $j(j + 2) - 2k + 2 = 2k^2 - 2k + 1$, which is equivalent to $(j + 1)^2 = 2k^2$. This equality cannot hold, hence we conclude that condition (c) is satisfied.
\end{proof}

\begin{lemma}\label{odd_4_ti_lemma}
    For any odd $n \ge 17$ with $n - 1\in\mathcal{PS}$, $C_{n - 2}\left(\frac{n + 1}{2}, \tfrac{n - 3}{2} + \sqrt{n - 1}\right)$ is TI.
\end{lemma}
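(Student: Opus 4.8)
The plan is to follow the strategy of Lemma~\ref{odd_3_ti_lemma} almost verbatim: fix an explicit labeling of the vertices of $C_{n-2}\left(\frac{n+1}{2}, \tfrac{n-3}{2} + \sqrt{n-1}\right)$, compute every transmission by repeatedly applying Lemma~\ref{neighbors_lemma}, and then verify that all of these values are pairwise distinct. I would set $k = \sqrt{n-1}$ and note that, since $n$ is odd, $k^2 = n-1$ is even and hence $k$ is even, with $k \ge 4$ as $n \ge 17$. On the spine $P_{n-2}$ the two branching vertices are $z_0$, at position $\frac{n+1}{2}$, and a second vertex at position $\frac{n-3}{2}+k$, i.e.\ at distance $k-2$ from $z_0$, which I would label $y_{k-2}$. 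The tree then splits into the pendent leaf $z_0 z_1$, the long arm $z_0 x_1 \cdots x_{\frac{n-1}{2}}$, the connecting path $z_0 y_1 \cdots y_{k-2}$, the pendent leaf $y_{k-2} w$, and the remaining arm $y_{k-2} y_{k-1} \cdots y_{\frac{n-5}{2}}$, analogously to Figure~\ref{fig_1}.

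Setting $\Tr(z_0) = \beta$ and telescoping the single-edge increments given by Lemma~\ref{neighbors_lemma}, I expect to obtain
\[
\Tr(z_1) = \beta + (k^2 - 1), \qquad \Tr(x_i) = \beta + i^2, \qquad \Tr(y_j) = \beta + j(j+2) \ \ (1 \le j \le k-2),
\]
together with $\Tr(w) = \beta + (2k^2 - 2k - 1)$ and $\Tr(y_j) = \beta + (j+2)^2 - 2k$ for $k-1 \le j \le \frac{n-5}{2}$; the extra $+2$ per step along the far arm simply records the leaf $w$ crossing to the near side once $y_{k-2}$ is passed. Because $i \mapsto i^2$ is strictly increasing and the $y_j$-offsets increase strictly in $j$ (jumping from $k^2 - 2k$ at $j = k-2$ to $k^2 + 1$ at $j = k-1$), no two members of the same family collide, so only cross-family equalities need to be excluded.

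These I would organize as conditions (a)--(c) of Lemma~\ref{odd_3_ti_lemma}. The consecutive-square cases are immediate: $\Tr(x_i) = \Tr(z_1)$ forces $k^2 - i^2 = 1$ and $\Tr(x_i) = \Tr(y_j)$ with $j \le k-2$ forces $(j+1)^2 - i^2 = 1$, neither of which has a positive solution, while $\Tr(y_j) = \Tr(z_1)$ fails since the offset $k^2 - 1$ lies strictly between the consecutive $y$-offsets $k^2 - 2k$ and $k^2 + 1$. The cases involving $w$ are modular: $\Tr(x_i) = \Tr(w)$ gives $i^2 = 2k^2 - 2k - 1 \equiv 3 \pmod 4$ (as $k(k-1)$ is even), and $\Tr(y_j) = \Tr(w)$ with $j \ge k-1$ gives $(j+2)^2 = 2k^2 - 1 \equiv 7 \pmod 8$ (here $k$ even is essential), both of which are quadratic non-residues. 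Finally, $\Tr(z_1) = \Tr(w)$ would force $k^2 - 2k = 0$, hence $k \le 2$, which is excluded.

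The hard part will be the surviving cross term $\Tr(x_i) = \Tr(y_j)$ with $j \ge k-1$, which reduces to $(j+2)^2 - i^2 = 2k$, i.e.\ $(j+2-i)(j+2+i) = 2k$. Here $i \ge j+2$ would make the left-hand side nonpositive, so both factors are positive; then $j \ge k-1$ and $i \ge 1$ give $j+2+i \ge k+2$, whence $j+2-i = \tfrac{2k}{j+2+i} < 2$ and therefore $j+2-i = 1$. But the two factors differ by $2i$ and so share a parity, which is contradicted by $j+2-i = 1$ being odd and $j+2+i = 2k$ being even. I expect this bounding-and-parity argument to be the most delicate step; once it is in place, all cross-family equalities are excluded, the intra-family monotonicity settles the rest, and the tree is TI.
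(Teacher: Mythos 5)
Your proposal follows the paper's proof of Lemma \ref{odd_4_ti_lemma} essentially verbatim: the same labeling of the two branching vertices, identical transmission formulas (your $(j+2)^2-2k$ is the paper's $j(j+4)-2k+4$), and the same case analysis, with only cosmetic variations --- a mod-$4$ rather than mod-$8$ argument for $\Tr(x_i)=\Tr(w)$, and the parity and size steps for $(j+2-i)(j+2+i)=2k$ applied in the opposite order. The only subcase you do not state explicitly is $\Tr(y_j)=\Tr(w)$ for $j\le k-2$, which is immediate since $\Tr(w)-\Tr(y_{k-2})=k^2-1>0$; otherwise the argument is complete and correct.
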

\begin{proof}
    Let $k^2 = n - 1$ and let $z_0$ and $y_{k - 2}$ be the branching vertices of $C_{n - 2}\left(\frac{n + 1}{2}, \tfrac{n - 3}{2} + \sqrt{n - 1}\right)$ so that:
    \begin{enumerate}[label=\textbf{(\arabic*)}]
        \item $z_0 y_1 y_2 \cdots y_{k - 2}$ is the path between $z_0$ and $y_{k - 2}$;
        \item $z_0 z_1$ is the pendent path of length one from $z_0$;
        \item $z_0 x_1 x_2 \cdots x_\frac{n - 1}{2}$ is the pendent path of length $\frac{n - 1}{2}$ from $z_0$;
        \item $y_{k - 2} w$ is the pendent path of length one from $y_{k - 2}$;
        \item $y_{k - 2} y_{k - 1} y_k \cdots y_\frac{n - 5}{2}$ is the pendent path of length $\frac{n - 1}{2} - k$ from $y_{k - 2}$;
    \end{enumerate}
    see Figure \ref{fig_2}. Moreover, let $\Tr(z_0) = \beta$. By repeatedly using Lemma~\ref{neighbors_lemma}, we obtain $\Tr(z_1) = \beta + (k^2 - 1)$, $\Tr(x_i) = \beta + i^2$ for any $i \in \left[ \frac{n - 1}{2} \right]$ and $\Tr(y_j) = \beta + j(j + 2)$ for any $j \in [k - 2]$, alongside $\Tr(w) = \beta + (2k^2 - 2k - 1)$ and $\Tr(y_j) = \beta + j(j + 4) - 2k + 4$ for any $j \in \left\{ k - 1, k, \ldots, \frac{n - 5}{2} \right\}$. Since $\Tr(w) > \Tr(z_1)$, to prove that $C_{n - 2}\left(\frac{n + 1}{2}, \tfrac{n - 3}{2} + \sqrt{n - 1}\right)$ is TI, it suffices to show that:
    \begin{enumerate}[label=\textbf{(\alph*)}]
        \item $\Tr(x_i) = \Tr(y_j)$ cannot hold for any $i \in \left[ \frac{n - 1}{2} \right]$ and $j \in \left[ \frac{n - 5}{2} \right]$;
        \item none of the $\Tr(x_i)$ values are equal to $\Tr(z_1)$ or $\Tr(w)$, for any $i \in \left[ \frac{n - 1}{2} \right]$;
        \item none of the $\Tr(y_j)$ values are equal to $\Tr(z_1)$ or $\Tr(w)$, for any $j \in \left[ \frac{n - 5}{2} \right]$.
    \end{enumerate}

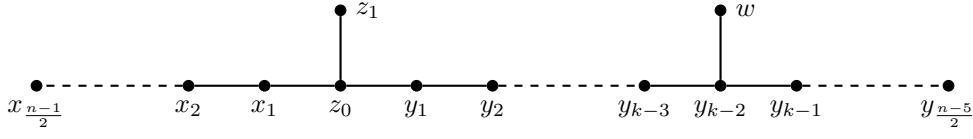
\begin{figure}[t]
\centering
\begin{tikzpicture}[scale=1.0]
\tikzstyle{vertex}=[draw,circle,minimum size=4pt,inner sep=1pt,fill=black]
\tikzstyle{edge}=[draw,thick]
\tikzstyle{dedge}=[draw,thick,dashed]

\node[vertex, label=below:$z_0$] (0) at (0, 0) {};
\node[vertex, label=right:$z_1$] (1) at (0, 1) {};

\node[vertex, label=below:$y_1$] (2) at (1, 0) {};
\node[vertex, label=below:$y_2$] (3) at (2, 0) {};
\node[vertex, label=below:$y_{k-3}$] (4) at (4, 0) {};
\node[vertex, label=below:$y_{k-2}$] (5) at (5, 0) {};
\node[vertex, label=right:$w$] (6) at (5, 1) {};
\node[vertex, label=below:$y_{k-1}$] (7) at (6, 0) {};
\node[vertex, label=below:$y_\frac{n - 5}{2}$] (8) at (8, 0) {};

\node[vertex, label=below:$x_1$] (9) at (-1, 0) {};
\node[vertex, label=below:$x_2$] (10) at (-2, 0) {};
\node[vertex, label=below:$x_\frac{n - 1}{2}$] (11) at (-4, 0) {};

\path[edge] (0) -- (1);
\path[edge] (0) -- (2);
\path[edge] (2) -- (3);
\path[dedge] (3) -- (4);
\path[edge] (4) -- (5);
\path[edge] (5) -- (6);
\path[edge] (5) -- (7);
\path[dedge] (7) -- (8);
\path[edge] (0) -- (9);
\path[edge] (9) -- (10);
\path[dedge] (10) -- (11);
\end{tikzpicture}
\caption{The graph $C_{n - 2}\left(\frac{n + 1}{2}, \tfrac{n - 3}{2} + \sqrt{n - 1}\right)$ from Lemma \ref{odd_4_ti_lemma}.}
\label{fig_2}
\end{figure}

    If we suppose that $\Tr(x_i) = \Tr(y_j)$ holds for some $i \in \left[ \frac{n - 1}{2} \right]$ and $j \in [k - 2]$, we then obtain $i^2 = j(j + 2)$, which is equivalent to $i^2 + 1 = (j + 1)^2$ and thus cannot be true. Now, suppose that $\Tr(x_i) = \Tr(y_j)$ holds for some $i \in \left[ \frac{n - 1}{2} \right]$ and $j \in \left\{ k - 1, k, \ldots, \frac{n - 5}{2} \right\}$. In this case, we get $(j - i + 2)(j + i + 2) = 2k$. Since $j - i + 2$ must be even, we have $j - i + 2 \ge 2$. From $j \ge k - 1$ and $i \ge 1$, we obtain $(j - i + 2)(j + i + 2) \ge 2(k + 2) = 2k + 4$, yielding a contradiction. Therefore, condition (a) is satisfied.

    We clearly have $\Tr(x_i) \neq \Tr(z_1)$ for every $i \in \left[ \frac{n - 1}{2} \right]$, since otherwise, we would get $i^2 = k^2 - 1$, which is impossible. Suppose that $\Tr(x_i) = \Tr(w)$ for some $i \in \left[ \frac{n - 1}{2} \right]$. In this case, we have $i^2 = 2k^2 - 2k - 1$, which is equivalent to $2i^2 = (2k - 1)^2 - 3$. Since $i$ is odd, we have $2i^2 \equiv 2 \pmod{8}$, hence $(2k - 1)^2 \equiv 5 \pmod{8}$, which is impossible because $5$ is not a quadratic residue modulo $8$. Thus, condition (b) holds.

    We trivially have $\Tr(y_j) \neq \Tr(z_1)$ for every $j \in \left[ \frac{n - 5}{2} \right]$ because $\Tr(y_{k - 2}) < \Tr(z_1) < \Tr(y_{k - 1})$. Also, $\Tr(y_j) = \Tr(w)$ cannot hold for any $j \in [k - 2]$, since $\Tr(w) > \Tr(y_{k - 2})$. Now, suppose that $\Tr(y_j) = \Tr(w)$ for some $j \in \left\{ k - 1, k, \ldots, \frac{n - 5}{2} \right\}$. In this case, we have $j(j + 4) - 2k + 4 = 2k^2 - 2k - 1$, which is equivalent to $(j + 2)^2 = 2k^2 - 1$. Since $n$ is odd, we know that $k$ must be even, hence $8 \mid 2k^2$. This means that $(j + 2)^2 \equiv 7 \pmod{8}$, which is not possible because $7$ is not a quadratic residue modulo $8$. From here, it follows that condition~(c) is satisfied.
\end{proof}

Combining Lemma \ref{branching lemma} with some elementary calculations, we can get the following results, omitting the respective  proofs.
\begin{lemma}\label{odd_2_wiener_lemma}
    For any odd $n \ge 7$, we have
    \[
        W\left(S\left( \tfrac{n - 1}{2}, \tfrac{n - 5}{2}, 2 \right)\right) = \frac{n^3 - 3n^2 + 17n - 15}{6} .
    \]
\end{lemma}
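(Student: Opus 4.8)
The plan is to apply Lemma \ref{branching lemma} directly, exploiting the fact that the starlike tree $S\left(\frac{n-1}{2}, \frac{n-5}{2}, 2\right)$ has exactly one branching vertex. First I would observe that its unique branching vertex $w$ is the centre of the star with $d(w) = 3$, and that the three components of $T - w$ are precisely the three pendent paths, of orders $\frac{n-1}{2}$, $\frac{n-5}{2}$ and $2$. A quick sanity check confirms the order of the whole tree, namely $1 + \frac{n-1}{2} + \frac{n-5}{2} + 2 = n$, so that Lemma \ref{branching lemma} applies with $n$ vertices and a single branching vertex $w_1 = w$ of degree three. For $n \ge 7$ each of the three path lengths is at least $1$, so $w$ really does have degree $3$.

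Since there is a single branching vertex of degree $3$, the inner sum $\sum_{1 \le p < q < r \le d_T(w)} \ell_{pw}\ell_{qw}\ell_{rw}$ in Lemma \ref{branching lemma} collapses to the single product of the three component orders. Thus
\[
    W\left(S\left(\tfrac{n-1}{2}, \tfrac{n-5}{2}, 2\right)\right) = \binom{n+1}{3} - \frac{n-1}{2} \cdot \frac{n-5}{2} \cdot 2 = \binom{n+1}{3} - \frac{(n-1)(n-5)}{2}.
\]

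The remaining step is elementary algebra. I would write $\binom{n+1}{3} = \frac{(n+1)n(n-1)}{6}$, place both terms over the common denominator $6$, factor out $(n-1)$, and simplify the resulting bracket $(n+1)n - 3(n-5)$ to $n^2 - 2n + 15$. This yields $\frac{(n-1)(n^2 - 2n + 15)}{6}$, whose numerator expands to $n^3 - 3n^2 + 17n - 15$, exactly the claimed closed form.

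There is no genuine obstacle in this argument; the only points requiring care are correctly reading off the three component orders from the starlike-tree notation and confirming that the branching vertex has degree exactly three, so that the inner sum of Lemma \ref{branching lemma} reduces to a single term. Everything else is a direct substitution followed by routine polynomial arithmetic, which is why the authors describe the proof as an elementary calculation.
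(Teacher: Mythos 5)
Your proposal is correct and matches the paper's intended argument: the authors explicitly state that this lemma follows from Lemma \ref{branching lemma} combined with elementary calculations, and they omit the details. Your computation $\binom{n+1}{3} - \frac{n-1}{2}\cdot\frac{n-5}{2}\cdot 2 = \frac{(n-1)(n^2-2n+15)}{6} = \frac{n^3-3n^2+17n-15}{6}$ checks out.
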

\begin{lemma}\label{odd_5_wiener_lemma}
    For any odd $n \ge 9$, we have
    \[
        W\left(S\left( \tfrac{n - 1}{2}, \tfrac{n - 7}{2}, 3 \right)\right) = \frac{2n^3 - 9n^2 + 70n - 63}{12}.
    \]
\end{lemma}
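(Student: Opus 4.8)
The plan is to apply Lemma \ref{branching lemma} directly, since the starlike tree $S\left(\tfrac{n-1}{2}, \tfrac{n-7}{2}, 3\right)$ has an especially simple branching structure. First I would observe that this tree has exactly one branching vertex, namely its center, call it $w_1$, with $d_T(w_1) = 3$. Deleting $w_1$ splits the tree into its three pendent paths, so the three component orders are $\ell_{11} = \tfrac{n-1}{2}$, $\ell_{21} = \tfrac{n-7}{2}$, and $\ell_{31} = 3$. As a consistency check, together with the center these account for $1 + \tfrac{n-1}{2} + \tfrac{n-7}{2} + 3 = n$ vertices, confirming both the order and that all three arms have positive integer length once $n \ge 9$.

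With $k = 1$ and $d_T(w_1) = 3$, the inner sum in Lemma \ref{branching lemma} over all triples $1 \le p < q < r \le 3$ collapses to a single term, the product of the three branch sizes. Hence
\[
    W\left(S\left(\tfrac{n-1}{2}, \tfrac{n-7}{2}, 3\right)\right) = \binom{n+1}{3} - \frac{n-1}{2} \cdot \frac{n-7}{2} \cdot 3 .
\]
The remaining step is purely algebraic: expanding $\binom{n+1}{3} = \tfrac{n^3 - n}{6}$ and $\tfrac{3(n-1)(n-7)}{4} = \tfrac{9n^2 - 72n + 63}{12}$, then combining over the common denominator $12$, yields $\tfrac{2n^3 - 9n^2 + 70n - 63}{12}$, exactly the claimed expression.

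Since the entire argument is a single instantiation of Lemma \ref{branching lemma}, there is no genuine conceptual obstacle here; the work reduces to careful bookkeeping. The two places that warrant attention are \emph{(i)} correctly reading off the three branch sizes from the notation $S(\cdot,\cdot,\cdot)$ without accidentally including the center, and \emph{(ii)} carrying out the polynomial simplification without sign errors when clearing denominators. The hypothesis $n \ge 9$ is precisely what guarantees that the shortest arm, of length $\tfrac{n-7}{2}$, is a positive integer, so that the object is a well-defined starlike tree with three arms and the formula applies.
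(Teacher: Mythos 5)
Your proof is correct and follows exactly the route the paper indicates (it states these Wiener-index values follow from Lemma \ref{branching lemma} "with some elementary calculations" and omits the details): a single branching vertex of degree three yields the one correction term $3\cdot\tfrac{n-1}{2}\cdot\tfrac{n-7}{2}$, and your algebra subtracting this from $\binom{n+1}{3}$ checks out.
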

\begin{lemma}\label{odd_3_wiener_lemma}
    For any odd $n \ge 11$ with $n - 2\in\mathcal{PS}$, we have
    \[
        W\left(C_{n - 2}\left(\tfrac{n - 1}{2}, \tfrac{n - 3}{2} + \sqrt{n - 2} \right)\right) = \frac{n^3 - 3n^2 + 17n  - 6 \sqrt{n - 2} - 21}{6} .
    \]
\end{lemma}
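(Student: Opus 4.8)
The plan is to apply the branching-vertex formula of Lemma \ref{branching lemma} directly. The tree $C_{n-2}\left(\frac{n-1}{2}, \frac{n-3}{2} + \sqrt{n-2}\right)$ has exactly two branching vertices, namely $z_0$ and $y_{k-1}$ with $k = \sqrt{n-2}$, each of degree $3$, as described in the proof of Lemma \ref{odd_3_ti_lemma} and depicted in Figure \ref{fig_1}. Because both branching vertices have degree $3$, the inner double sum in Lemma \ref{branching lemma} collapses to a single product of three component orders at each vertex, so the whole computation reduces to correctly recording six component sizes and then simplifying.

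First I would read off the three components of $T - z_0$ from the structure: the isolated vertex $z_1$, the pendent path on $x_1, \ldots, x_{\frac{n-3}{2}}$, and the remaining component containing all of the $y$'s together with $w$. Their orders are $1$, $\frac{n-3}{2}$, and $\frac{n-1}{2}$ respectively, which sum to $n-1$ as a sanity check. Hence the contribution of $z_0$ to the subtracted sum is $1 \cdot \frac{n-3}{2} \cdot \frac{n-1}{2} = \frac{(n-1)(n-3)}{4}$.

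Next I would treat $y_{k-1}$. Its three components are the isolated vertex $w$, the pendent path $y_k, \ldots, y_{\frac{n-3}{2}}$ of order $\frac{n-1}{2} - k$, and the remaining component (the one containing $z_0$) of order $\frac{n-3}{2} + k$; these again sum to $n-1$. The contribution of $y_{k-1}$ is therefore $\left(\frac{n-1}{2} - k\right)\left(\frac{n-3}{2} + k\right)$, and here I would exploit $k^2 = n-2$ together with $\frac{n-1}{2} - \frac{n-3}{2} = 1$ to rewrite this product as $\frac{(n-1)(n-3)}{4} + \sqrt{n-2} - (n-2)$. This is the one spot where the irrational term $\sqrt{n-2}$ enters the final expression.

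Finally I would substitute into Lemma \ref{branching lemma}, giving $W(T) = \binom{n+1}{3} - \frac{(n-1)(n-3)}{4} - \left[\frac{(n-1)(n-3)}{4} + \sqrt{n-2} - (n-2)\right]$, then expand $\binom{n+1}{3} = \frac{n^3 - n}{6}$ and collect everything over the common denominator $6$ to arrive at $\frac{n^3 - 3n^2 + 17n - 6\sqrt{n-2} - 21}{6}$. I do not expect a genuine obstacle here, since the argument is mechanical once the component orders are fixed; the only place demanding care is the bookkeeping at the second branching vertex $y_{k-1}$, where an off-by-$k$ slip in any component order would corrupt the coefficient of $\sqrt{n-2}$ and the constant term.
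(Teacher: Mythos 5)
Your computation is correct and follows exactly the route the paper indicates (it omits the proof but states that the formula follows from Lemma \ref{branching lemma} with elementary calculations): the component orders $1$, $\tfrac{n-3}{2}$, $\tfrac{n-1}{2}$ at $z_0$ and $1$, $\tfrac{n-1}{2}-k$, $\tfrac{n-3}{2}+k$ at $y_{k-1}$ are all read off correctly, and the simplification using $k^2 = n-2$ yields the stated expression.
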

\begin{lemma}\label{odd_4_wiener_lemma}
    For any odd $n \ge 17$ with $n - 1\in\mathcal{PS}$, we have
    \[
        W\left(C_{n - 2}\left( \tfrac{n + 1}{2}, \tfrac{n - 3}{2} + \sqrt{n - 1} \right)\right) = \frac{n^3 - 3n^2 + 17n  - 6 \sqrt{n - 1} - 15}{6}.
    \]
\end{lemma}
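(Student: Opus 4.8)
The plan is to compute $W(T)$ for $T = C_{n-2}\left(\tfrac{n+1}{2}, \tfrac{n-3}{2}+\sqrt{n-1}\right)$ directly via Lemma~\ref{branching lemma}, reusing the explicit structure of $T$ already established in the proof of Lemma~\ref{odd_4_ti_lemma} (see Figure~\ref{fig_2}). Writing $k = \sqrt{n-1}$, the tree $T$ has exactly two branching vertices, namely $z_0$ and $y_{k-2}$, and both have degree $3$. For a vertex of degree $3$, the inner sum $\sum_{1 \le p < q < r \le 3} \ell_{pj}\ell_{qj}\ell_{rj}$ in Lemma~\ref{branching lemma} collapses to the single product of the orders of the three components of $T - w_j$. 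Thus the whole task reduces to reading off six component orders, forming two triple products, and setting $W(T) = \binom{n+1}{3}$ minus their sum.

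First I would record the component orders. Deleting $z_0$ splits $T$ into the isolated vertex $z_1$, the pendent path on $x_1, \ldots, x_{\frac{n-1}{2}}$, and the remaining branch containing $y_1$; since $|T| = n$, these have orders $1$, $\frac{n-1}{2}$, and $\frac{n-3}{2}$, so their product is $\frac{(n-1)(n-3)}{4}$. Deleting $y_{k-2}$ likewise splits $T$ into the isolated vertex $w$, the pendent path on $y_{k-1}, \ldots, y_{\frac{n-5}{2}}$ of order $\frac{n-1}{2} - k$, and the branch toward $z_0$ of order $\frac{n-3}{2} + k$, with product $\left(\frac{n-1}{2} - k\right)\left(\frac{n-3}{2} + k\right)$. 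Note that the positions of the branching vertices along the spine are irrelevant here, since only the component orders enter Lemma~\ref{branching lemma}.

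It then remains to simplify $W(T) = \binom{n+1}{3} - \frac{(n-1)(n-3)}{4} - \left(\frac{n-1}{2} - k\right)\left(\frac{n-3}{2} + k\right)$. The only nonroutine point is the second product: expanding it produces a $-k^2$ term, and substituting $k^2 = n-1$ makes the quadratic part combine cleanly while leaving a single surviving linear term $+k = +\sqrt{n-1}$. This is precisely the source of the $-6\sqrt{n-1}$ in the final numerator. After clearing denominators over $6$ and collecting terms, the expression reduces to $\frac{n^3 - 3n^2 + 17n - 6\sqrt{n-1} - 15}{6}$, as claimed. I expect the main (and essentially only) obstacle to be bookkeeping: correctly matching each neighbour of $y_{k-2}$ to its branch order and tracking the $k^2 = n-1$ substitution so that the radical term surfaces with the right coefficient. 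There is no genuine difficulty beyond careful algebra, which is why the proof can reasonably be stated succinctly, as the authors indicate for this family of Wiener-index lemmas.
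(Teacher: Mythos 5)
Your proposal is correct and follows exactly the route the paper intends: the authors state that Lemma~\ref{odd_4_wiener_lemma} follows from Lemma~\ref{branching lemma} "with some elementary calculations" and omit the details, and your component orders $\bigl(1,\tfrac{n-1}{2},\tfrac{n-3}{2}\bigr)$ at $z_0$ and $\bigl(1,\tfrac{n-1}{2}-k,\tfrac{n-3}{2}+k\bigr)$ at $y_{k-2}$, together with the substitution $k^2=n-1$, do reproduce $\frac{n^3-3n^2+17n-6\sqrt{n-1}-15}{6}$. No gaps.
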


We proceed with the following auxiliary lemma.

\begin{lemma}\label{odd_char_lemma}
    Let $n \ge 17$ be an odd integer with $\{n - 2,n - 1\}\cap\mathcal{PS}\neq\emptyset$. Also, let $T$ be a TI tree of order $n$ that attains the maximum Wiener index with $v\in V(T)$ having the minimum transmission. Then $d_T(v) = 3$ and one of the following holds:
    \begin{enumerate}[label=\textbf{(\alph*)}]
        \item there is a leaf attached to $v$, while the other two components of $T - v$ have $\frac{n - 1}{2}$ and $\frac{n - 3}{2}$ vertices, respectively;
        \item there is a pendent path of length two attached to $v$, while the other two components of $T - v$ have $\frac{n - 1}{2}$ and $\frac{n - 5}{2}$ vertices, respectively.
    \end{enumerate}
\end{lemma}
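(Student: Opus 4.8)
The plan is to mirror the proof of Proposition~\ref{odd_th_case_1}, the essential new feature being that $S\!\left(\tfrac{n-1}{2}, \tfrac{n-3}{2}, 1\right)$ is no longer TI here (by Lemma~\ref{odd_1_ti_lemma}, since $n-2$ or $n-1$ is a perfect square), so the global maximizer is unavailable and I must argue that an optimal TI tree sits at one of the next-best component profiles around $v$. First, Lemma~\ref{min_deg_3_lemma} gives $k := d_T(v) \ge 3$; writing $a_1 > a_2 > \cdots > a_k \ge 1$ for the sizes of the components of $T - v$, these are pairwise distinct by Lemma~\ref{compo}, sum to $n-1$, and satisfy $a_1 \le \tfrac{n-1}{2}$ (otherwise a vertex in the largest component would have transmission smaller than $\Tr(v)$). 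By Lemma~\ref{arm_lemma} we have $W(T) \le W(S(a_1, \ldots, a_k))$, and by Lemma~\ref{branching lemma} this starlike value equals $\binom{n+1}{3} - E(a_1, \ldots, a_k)$, where I abbreviate $E(a_1, \ldots, a_k) = \sum_{1 \le p < q < r \le k} a_p a_q a_r$. Thus maximizing $W$ over profiles is the same as minimizing $E$.

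Next I establish a threshold. The hypothesis $\{n-2, n-1\} \cap \mathcal{PS} \neq \emptyset$ supplies a concrete TI tree of order $n$: if $n - 2 \in \mathcal{PS}$ it is $C_{n-2}\!\left(\tfrac{n-1}{2}, \tfrac{n-3}{2} + \sqrt{n-2}\right)$ (TI by Lemma~\ref{odd_3_ti_lemma}), and if $n - 1 \in \mathcal{PS}$ it is $C_{n-2}\!\left(\tfrac{n+1}{2}, \tfrac{n-3}{2} + \sqrt{n-1}\right)$ (TI by Lemma~\ref{odd_4_ti_lemma}). One checks that both of these have $v$-profile $\left\{\tfrac{n-1}{2}, \tfrac{n-3}{2}, 1\right\}$ around their minimum-transmission vertex $z_0$, so they are themselves instances of case~(a), which is a reassuring consistency check. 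Using the explicit values in Lemmas~\ref{odd_3_wiener_lemma}, \ref{odd_4_wiener_lemma} and \ref{odd_5_wiener_lemma}, I verify the elementary strict inequality $W(\text{guaranteed TI tree}) > W\!\left(S\!\left(\tfrac{n-1}{2}, \tfrac{n-7}{2}, 3\right)\right)$; it reduces to $n^2 - 12n + O(\sqrt{n}) > 0$ and holds for all odd $n \ge 17$ in the respective ranges. Since $T$ attains the maximum Wiener index over TI trees, this gives $W(T) > W\!\left(S\!\left(\tfrac{n-1}{2}, \tfrac{n-7}{2}, 3\right)\right)$.

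The crux is then an ordering statement: every admissible profile other than $\left(\tfrac{n-1}{2}, \tfrac{n-3}{2}, 1\right)$ and $\left(\tfrac{n-1}{2}, \tfrac{n-5}{2}, 2\right)$ satisfies $E(a_1, \ldots, a_k) \ge E\!\left(\tfrac{n-1}{2}, \tfrac{n-7}{2}, 3\right)$, equivalently $W(S(a_1, \ldots, a_k)) \le W\!\left(S\!\left(\tfrac{n-1}{2}, \tfrac{n-7}{2}, 3\right)\right)$. Granting this, the chain $W(T) \le W(S(a_1, \ldots, a_k)) \le W\!\left(S\!\left(\tfrac{n-1}{2}, \tfrac{n-7}{2}, 3\right)\right) < W(T)$ is contradictory for any other profile, so the profile of $T$ must be one of the two listed, forcing $k = 3$. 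The triple $\left(\tfrac{n-1}{2}, \tfrac{n-3}{2}, 1\right)$ is precisely case~(a) (the size-$1$ component is a leaf at $v$), while $\left(\tfrac{n-1}{2}, \tfrac{n-5}{2}, 2\right)$ is precisely case~(b) (the size-$2$ component is the unique tree $P_2$, i.e.\ a pendent path of length two at $v$); no claim about the internal structure of the two large components is needed.

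I expect the ordering statement to be the main obstacle, because $E$ must be controlled across all part counts $k$ at once. The guiding principle is that, for a fixed sum, making two parts more unequal strictly decreases $E$; this is exactly the content of Corollary~\ref{majorization_lemma} read through Lemma~\ref{branching lemma}, so within each fixed $k$ the minimizer of $E$ is the maximally spread feasible profile, namely the one with $a_1 = \tfrac{n-1}{2}$ and the remaining parts as small and spread as distinctness permits. For $k = 3$ this means minimizing $a_1 a_2 a_3$: the constraints $a_3 = 1$ and $a_3 = 2$ recover profiles~(a) and~(b), while for $a_3 \ge 3$ the minimum occurs at $\left(\tfrac{n-1}{2}, \tfrac{n-7}{2}, 3\right)$. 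To compare different part counts I use the transfer identity
\[
    E(a_1, \ldots, a_{k-2}, b, c) = E(a_1, \ldots, a_{k-2}, b + c) + b\,c\,(a_1 + \cdots + a_{k-2}),
\]
which shows that splitting one part into two strictly increases $E$; hence the maximally spread profiles are ordered by $k$, and among those with $k \ge 4$ parts the minimum of $E$ occurs at $k = 4$, namely at $\left(\tfrac{n-1}{2}, \tfrac{n-7}{2}, 2, 1\right)$, for which a direct computation gives $E\!\left(\tfrac{n-1}{2}, \tfrac{n-7}{2}, 2, 1\right) - E\!\left(\tfrac{n-1}{2}, \tfrac{n-7}{2}, 3\right) = 2(n-4) > 0$. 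The remaining work is then the careful bookkeeping of feasibility (distinctness and the cap $a_1 \le \tfrac{n-1}{2}$ for each $k$), which settles the ordering and hence the lemma.
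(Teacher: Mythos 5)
Your proposal is correct and follows essentially the same route as the paper: bound $W(T)$ for any profile with $\sum_{i\ge 3} a_i \ge 3$ by $W\bigl(S\bigl(\tfrac{n-1}{2},\tfrac{n-7}{2},3\bigr)\bigr)$, then derive a contradiction by comparing with the Wiener index of the concrete TI caterpillar guaranteed by Lemma~\ref{odd_3_ti_lemma} or~\ref{odd_4_ti_lemma}. The only difference is cosmetic: you justify the profile-ordering step explicitly via the symmetric function $E$ from Lemma~\ref{branching lemma} and a transfer identity, whereas the paper reaches the same bound by repeated application of Lemma~\ref{arm_lemma} and Corollary~\ref{majorization_lemma}.
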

\begin{proof}
    By Lemma \ref{min_deg_3_lemma}, we have $d_T(v) \ge 3$. Let $k = d_T(v)$ and let $T_1, T_2, \ldots, T_k$ be the components of $T - v$. Also, for each $i \in [k]$, let $a_i = |T_i|$. Analogously to Proposition~\ref{odd_th_case_1}, we may assume without loss of generality that $\frac{n - 1}{2} \ge a_1 > a_2 > \cdots > a_k \ge 1$.

    Now, suppose that $\sum_{i = 3}^k a_i \ge 3$. In this case, by repeatedly applying Lemma~\ref{arm_lemma}, then Corollary \ref{majorization_lemma}, as in Proposition \ref{odd_th_case_1}, we may conclude that $W(T) \le W\left(S\left(\frac{n - 1}{2}, \frac{n - 7}{2}, 3\right)\right)$. From Lemma \ref{odd_5_wiener_lemma}, we then obtain
    \begin{equation}\label{aux_5}
        W(T) \le \frac{2n^3 - 9n^2 + 70n - 63}{12}.
    \end{equation}
    If $n - 2\in\mathcal{PS}$, then Lemma \ref{odd_3_ti_lemma} implies that $C_{n - 2}\left(\frac{n - 1}{2}, \tfrac{n - 3}{2} + \sqrt{n - 2}\right)$ is a TI tree of order $n$, while if $n - 1\in\mathcal{PS}$, then $C_{n - 2}\left(\frac{n + 1}{2}, \tfrac{n - 3}{2} + \sqrt{n - 1}\right)$ is a TI tree of order $n$ from Lemma \ref{odd_4_ti_lemma}. Either way, since $T$ attains the maximum Wiener index, while $6 \sqrt{n - 2} + 21 > 6 \sqrt{n - 1} + 15$, by Lemmas~\ref{odd_3_wiener_lemma} and \ref{odd_4_wiener_lemma}, we have
    \begin{equation}\label{aux_6}
        W(T) \ge \frac{n^3 - 3n^2 + 17n  - 6 \sqrt{n - 2} - 21}{6} .
    \end{equation}
    By combining \eqref{aux_5} and \eqref{aux_6}, we get
    \[
        3n^2 - 36n - 12 \sqrt{n - 2} + 21 \le 0, \qquad \mbox{i.e.,} \qquad n^2 - 12n - 4 \sqrt{n - 2} + 7 \le 0,
    \]
    which obviously cannot hold when $n \ge 17$.

    From the obtained contradiction, we conclude that $\sum_{i = 3}^k a_i \le 2$, which means that $k = 3$ and either $a_3 = 1$ or $a_3 = 2$. Also, note that $a_1 \le \frac{n - 1}{2}$ and $a_2 \le \frac{n - 3}{2}$. Therefore, if $a_3 = 1$, then there is a leaf attached to $v$, while the other two components of $T - v$ have $\frac{n - 1}{2}$ and $\frac{n - 3}{2}$ vertices. On the other hand, if $a_3 = 2$, then there is a pendent path of length two attached to $v$, while the other two components of $T - v$ have $\frac{n - 1}{2}$ and $\frac{n - 5}{2}$ vertices.
\end{proof}

We will say that a TI tree $T$ is of \textit{type A} (resp.\ \textit{type B}) if its vertex with the minimum transmission $v$ is of degree three and condition (a) (resp.\ (b)) holds from Lemma \ref{odd_char_lemma}. With this in mind, Lemma \ref{odd_char_lemma} states that any extremal tree is of type A or type B. By investigating these two types of trees separately, we obtain the next result.

\begin{lemma}\label{odd_type_a_1}
    For any odd $n \ge 11$ with $n - 2\in \mathcal{PS}$, $C_{n - 2}\left(\frac{n - 1}{2}, \frac{n - 3}{2} + \sqrt{n - 2} \right)$ is the unique TI tree of type A of order $n$ that attains the greatest Wiener index.
\end{lemma}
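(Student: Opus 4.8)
The plan is to convert the problem into a defect‑minimization via Lemma~\ref{branching lemma}. Write $k=\sqrt{n-2}$ and let $T$ be any type A tree, with minimum‑transmission vertex $z_0$, pendent leaf $z_1$, long arm $A$ on $\frac{n-1}{2}$ vertices, short arm $B$ on $\frac{n-3}{2}$ vertices, and $\beta=\Tr(z_0)$. Because in a type A tree $z_0$ always splits off components of sizes $1,\frac{n-1}{2},\frac{n-3}{2}$, its contribution in Lemma~\ref{branching lemma} is the fixed number $\frac{(n-1)(n-3)}{4}$, so
\[
W(T)=\binom{n+1}{3}-\frac{(n-1)(n-3)}{4}-D(T),
\]
where $D(T)\ge 0$ collects the branch‑product terms over all branching vertices other than $z_0$. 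Thus maximizing $W$ among type A trees is equivalent to minimizing the defect $D(T)$, and it suffices to prove that every type A \emph{TI} tree satisfies $D(T)\ge f(k-1)$, where $f(j):=\bigl(\tfrac{n-1}{2}+j\bigr)\bigl(\tfrac{n-3}{2}-j\bigr)$, with equality only for the target (whose unique off‑$z_0$ branch contributes exactly $f(k-1)$).

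The engine is a necessary TI condition from Lemma~\ref{neighbors_lemma}: the leaf obeys $\Tr(z_1)=\beta+(n-2)=\beta+k^2$, so in a TI tree no other vertex may attain the value $\beta+k^2$. I would feed this into the long arm. Fix a longest path $z_0,p_1,p_2,\dots$ into $A$, and let $j$ be the depth of its first branching vertex (the first $p_i$ carrying a pendant off this path). Stepping Lemma~\ref{neighbors_lemma} along a branch‑free initial segment yields $\Tr(p_i)=\beta+i^2$ as long as $p_1,\dots,p_{i-1}$ carry no pendant. Hence, if $A$ were a path, or if the first branch occurred at depth $j\ge k$, then $p_k$ would exist with $\Tr(p_k)=\beta+k^2=\Tr(z_1)$, violating the TI property. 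Consequently $A$ is not a path and $j\le k-1$; the ``bushy'' possibility (longest path into $A$ shorter than $k$) is subsumed here, since it forces a branch at depth $j\le k-1$ as well.

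Next I would bound $D(T)$ from below using only this first branch. Deleting $p_j$ leaves the component toward $z_0$ of size $\frac{n-1}{2}+j$ together with at least two further components summing to $\frac{n-1}{2}-j$; its branch‑product term is therefore at least $\bigl(\tfrac{n-1}{2}+j\bigr)\cdot 1\cdot\bigl(\tfrac{n-3}{2}-j\bigr)=f(j)$, the minimum being reached exactly when $p_j$ has degree three with a single pendent leaf. Since the remaining branch terms are nonnegative and $f$ is strictly decreasing, $D(T)\ge f(j)\ge f(k-1)$. Tracing the equalities fixes the structure: $j=k-1$, the vertex $p_{k-1}$ carries one pendent leaf, and there are no other branches, so $B$ and both segments of $A$ are paths. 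This is precisely $C_{n-2}\!\left(\tfrac{n-1}{2},\tfrac{n-3}{2}+\sqrt{n-2}\right)$, which is TI by Lemma~\ref{odd_3_ti_lemma}; uniqueness and maximality follow at once.

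I expect the main obstacle to be the first‑branch step and its equality analysis: one must argue cleanly that a branch‑free initial segment of the spine gives $\Tr(p_i)=\beta+i^2$, that any pendant appearing before depth $k$ strictly raises $\Tr(p_k)$ above $\beta+k^2$ (so avoiding the leaf's transmission forces $j\le k-1$), and that the elementary symmetric estimate $\sum_{p<q<r}\ell_p\ell_q\ell_r\ge f(j)$ holds for a branch of arbitrary degree with the prescribed $z_0$‑side size, with equality exactly for a degree‑three vertex carrying a single leaf. Once these are in place, the remaining identification of the extremal tree is routine.
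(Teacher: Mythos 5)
Your argument is correct, and it reaches the paper's conclusion by a genuinely different route in the optimization step. The paper's proof gets from an arbitrary type A TI tree to the caterpillar by a chain of Wiener-increasing surgeries: Lemma~\ref{arm_lemma} straightens the off-spine branches into pendent paths, Corollary~\ref{majorization_lemma} majorizes the pendent-path lengths at $w$ down to a single leaf plus one long path, and Lemma~\ref{wiener_fusion_lemma} shows that pushing the second branching vertex farther from $v$ (up to the TI-imposed ceiling) increases $W$. You instead invoke the closed-form branching formula of Lemma~\ref{branching lemma}, observe that the $z_0$-term is the constant $\tfrac{(n-1)(n-3)}{4}$ for every type A tree, and reduce the whole problem to minimizing the residual defect $D(T)$, bounding it below by the single term $f(j)=\bigl(\tfrac{n-1}{2}+j\bigr)\bigl(\tfrac{n-3}{2}-j\bigr)$ at the first branching vertex of the long arm and using monotonicity of $f$. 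The crucial combinatorial input is identical in both proofs: the leaf at $z_0$ has transmission $\beta+(n-2)=\beta+k^2$, and a branch-free initial segment of the long arm would reproduce this value at depth $k$, so the TI property forces the first branching vertex to sit at depth $j\le k-1$; your derivation $\Tr(p_i)=\beta+i^2$ via Lemma~\ref{neighbors_lemma} is exactly the computation underlying Lemma~\ref{odd_3_ti_lemma}. Your route buys a cleaner, one-shot equality analysis (degree three, a single pendent leaf at $p_{k-1}$, no other branching vertices) at the price of the small elementary-symmetric-function estimate you flag at the end — namely that $e_2$ of at least two positive parts summing to $S$ is at least $S-1$, with equality only for the partition $\{1,S-1\}$, and is strictly larger for three or more parts — which is routine and closes the gap; the paper's route avoids that estimate but pays with three separate transformation lemmas and their equality cases.
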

\begin{proof}
    Let $T$ be a TI tree of type A of order $n$ with $v\in V(T)$ having the minimum transmission. Also,  $T-v=\bigcup\limits_{i=1}^{3}T_i$ with $|T_1| = \frac{n - 1}{2}$, $|T_2| = \frac{n - 3}{2}$ and $|T_3| = 1$. Moreover, let $u$ be the leaf attached to $v$.  If $T_1$  is a path from $v$, then the vertex on this path with distance $\sqrt{n - 2}$ to $v$ has the same transmission in $T$ as $u$. A similar conclusion can be made if the branching vertex in $T$ from $T_1$ closest to $v$ has a distance to $v$ of at least $\sqrt{n - 2}$. Thus, if we denote by $w$ the branching vertex in $T$ from $T_1$ closest to $v$, we have $d_T(w, v) \le \sqrt{n - 2} - 1$.

    By repeatedly using Lemma \ref{arm_lemma} to $T_2$ and the components of $T - w$ not containing $v$, we get $W(T) \le W(T')$, where $T'$ is a double starlike tree with branching vertices $v$ and $w$ such that:
    \begin{enumerate}[label=\textbf{(\arabic*)}]
        \item $d_{T'}(w, v) = d_T(w, v) \le \sqrt{n - 2} - 1$;
        \item $d_{T'}(v) = 3$ and the two pendent paths attached to $v$ have lengths $\frac{n - 3}{2}$ and $1$, respectively.
    \end{enumerate}
    Of course, equality holds if and only if $T \cong T'$. Now, by repeatedly applying Corollary \ref{majorization_lemma} to the pendent paths of $w$ in $T'$, we conclude that $W(T') \le W(T'')$, where $T'' \cong C_{n - 2}\left(\frac{n - 1}{2}, \frac{n - 1}{2} + t\right)$ and $t = d_{T'}(w, v)$, with equality holding if and only if $T' \cong T''$.

    From Lemma \ref{wiener_fusion_lemma}, we trivially observe that for any $\frac{n + 1}{2} \le t_1 < t_2 \le n - 3$, we have
    \[
        W\left(C_{n - 2}\left(\tfrac{n - 1}{2}, t_1 \right)\right) < W\left(C_{n - 2}\left(\tfrac{n - 1}{2}, t_2 \right) \right).
    \]
    With this in mind, it follows that $W(T'') \le W\left( C_{n - 2}\left( \frac{n - 1}{2}, \frac{n - 3}{2} + \sqrt{n - 2} \right) \right)$. Moreover, we get $W(T) < W\left( C_{n - 2}\left( \frac{n - 1}{2}, \frac{n - 3}{2} + \sqrt{n - 2} \right) \right)$ for any TI tree $T$ of type A of order $n$ which is not isomorphic to $C_{n - 2} \left( \frac{n - 1}{2}, \frac{n - 3}{2} + \sqrt{n - 2}\right)$. The result now follows from Lemma \ref{odd_3_ti_lemma}.
\end{proof}

The next result can be proved in an analogous manner, hence we omit its proof.

\begin{lemma}\label{odd_type_a_2}
    For any odd $n \ge 17$ with $n - 1\in\mathcal{PS}$, $C_{n - 2}\left(\frac{n + 1}{2}, \frac{n - 3}{2} + \sqrt{n - 1} \right)$ is the unique TI tree of type A of order $n$ that attains the greatest Wiener index.
\end{lemma}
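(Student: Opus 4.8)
The plan is to follow the proof of Lemma~\ref{odd_type_a_1} step by step, interchanging the roles played by the two long arms of $v$, because here it is $n-1$ rather than $n-2$ that is a perfect square. I would write $k=\sqrt{n-1}$, which is even since $n$ is odd. Let $T$ be a TI tree of type A of order $n$ with $v\in V(T)$ of minimum transmission, and write $T-v=T_1\cup T_2\cup T_3$ with $|T_1|=\frac{n-1}{2}$, $|T_2|=\frac{n-3}{2}$ and $T_3=\{u\}$, where $u$ is the leaf attached to $v$. Putting $\beta=\Tr(v)$, a single use of Lemma~\ref{neighbors_lemma} gives $\Tr(u)=\beta+(n-2)$.

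First I would locate the forced branching vertex. Repeatedly applying Lemma~\ref{neighbors_lemma} along the initial path of a component attached to $v$ shows that, up to the first branching vertex, the transmission at distance $i$ equals $\beta+i^2$ inside a component of order $\frac{n-1}{2}$, and equals $\beta+j(j+2)$ at distance $j$ inside a component of order $\frac{n-3}{2}$. The equation $i^2=n-2$ has no solution, as $n-2\notin\mathcal{PS}$ whenever $n-1\in\mathcal{PS}$, so no clash with $u$ can arise on the $T_1$ side. By contrast $j(j+2)=n-2$ does have the solution $j=k-1$, since $(k-1)(k+1)=k^2-1=n-2$. Consequently, if $T_2$ were a path, or if its branching vertex closest to $v$ were at distance at least $k-1$ from $v$, then the vertex $y_{k-1}$ on the initial path of $T_2$ would satisfy $\Tr(y_{k-1})=\beta+(n-2)=\Tr(u)$, contradicting the TI property. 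Hence $T_2$ is not a path, and its branching vertex $w$ closest to $v$ obeys $d_T(w,v)\le k-2=\sqrt{n-1}-2$; in particular $w$ has at least two pendent branches directed away from $v$.

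From this point I would follow Lemma~\ref{odd_type_a_1} verbatim. Applying Lemma~\ref{arm_lemma} repeatedly to $T_1$ and to the components of $T-w$ not containing $v$ produces a double starlike tree $T'$ with branching vertices $v$ and $w$, in which $d_{T'}(w,v)=d_T(w,v)\le\sqrt{n-1}-2$, the vertex $v$ has degree three with pendent paths of lengths $\frac{n-1}{2}$ and $1$, and $w$ still carries at least two pendent paths away from $v$; this gives $W(T)\le W(T')$, with equality if and only if $T\cong T'$. Concentrating the pendent paths of $w$ by repeated use of Corollary~\ref{majorization_lemma}, while keeping $w$ a branching vertex, then yields $W(T')\le W(T'')$ with $T''\cong C_{n-2}\!\left(\frac{n+1}{2},\frac{n+1}{2}+t\right)$ and $t=d_{T'}(w,v)$, again with equality if and only if $T'\cong T''$. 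Finally, Lemma~\ref{wiener_fusion_lemma} shows $W\!\left(C_{n-2}\!\left(\frac{n+1}{2},p_1\right)\right)<W\!\left(C_{n-2}\!\left(\frac{n+1}{2},p_2\right)\right)$ for $\frac{n+3}{2}\le p_1<p_2\le n-3$, and since $\frac{n+1}{2}+t\le\frac{n-3}{2}+\sqrt{n-1}$ I obtain $W(T'')\le W\!\left(C_{n-2}\!\left(\frac{n+1}{2},\frac{n-3}{2}+\sqrt{n-1}\right)\right)$. Chaining these inequalities shows that any type A TI tree of order $n$ not isomorphic to $C_{n-2}\!\left(\frac{n+1}{2},\frac{n-3}{2}+\sqrt{n-1}\right)$ has strictly smaller Wiener index, and Lemma~\ref{odd_4_ti_lemma} confirms that this caterpillar is TI, which finishes the argument.

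The only genuine deviation from Lemma~\ref{odd_type_a_1}, and hence the step deserving the most care, is the first one: I must check that the transmission collision with $u$ is produced on the shorter arm $T_2$ through $j(j+2)=n-2$, rather than on the longer arm $T_1$ through $i^2=n-2$ (now unsolvable), and then track how this shifts the distance bound to $\sqrt{n-1}-2$ and the apex of the extremal caterpillar to position $\frac{n+1}{2}$. A secondary point worth stating explicitly is that $w$ must be kept as a branching vertex during the concentration step: collapsing its pendent paths entirely would give $S\!\left(\frac{n-1}{2},\frac{n-3}{2},1\right)$, which has strictly larger Wiener index but fails to be TI when $n-1\in\mathcal{PS}$ (Lemma~\ref{odd_1_ti_lemma}), so it cannot be the extremal object; the TI hypothesis on $T$, which forced $T_2$ to branch, is exactly what legitimises stopping at the caterpillar $T''$.
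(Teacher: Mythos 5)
Your proposal is correct and is exactly the argument the paper intends: the paper omits the proof of Lemma~\ref{odd_type_a_2}, declaring it analogous to Lemma~\ref{odd_type_a_1}, and your writeup carries out that analogy faithfully --- correctly relocating the forced branching vertex to the component of order $\frac{n-3}{2}$ via the collision $j(j+2)=n-2$, i.e.\ $(j+1)^2=n-1$, and adjusting the distance bound to $\sqrt{n-1}-2$ and the caterpillar apex to position $\frac{n+1}{2}$. Your closing remark on why the concentration step must stop while $w$ is still a branching vertex is a correct and welcome clarification of a point the paper leaves implicit.
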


We are now in a position to settle case (ii) of Theorem \ref{odd_th} as follows.

\begin{proposition}\label{odd_th_case_2}
    For any odd $n \ge 17$ with $\{n - 2,n - 1\}\cap\mathcal{PS}\neq\emptyset$ and $\{2n - 6,2n - 2\}\cap\mathcal{PS}=\emptyset$, $S\left( \frac{n - 1}{2}, \frac{n - 5}{2}, 2\right)$ is the unique TI tree of order $n$ that attains the maximum Wiener index.
\end{proposition}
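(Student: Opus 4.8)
The plan is to combine Lemma~\ref{odd_char_lemma} with the Wiener index computations in Lemmas~\ref{odd_2_wiener_lemma}, \ref{odd_3_wiener_lemma} and \ref{odd_4_wiener_lemma}. First I would confirm that $S\left(\frac{n-1}{2}, \frac{n-5}{2}, 2\right)$ is genuinely TI under the stated hypotheses, so that it is a legitimate competitor. By Lemma~\ref{odd_2_ti_lemma}, this amounts to checking $\mathcal{PS} \cap \{n-4, n, 2n-6, 2n-2\} = \emptyset$. The hypothesis already supplies $\{2n-6, 2n-2\} \cap \mathcal{PS} = \emptyset$, so it remains to rule out $n-4$ and $n$. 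Since $\{n-2, n-1\} \cap \mathcal{PS} \neq \emptyset$, exactly one of $n-2, n-1$ is a perfect square (they cannot both be, being consecutive). If $n-2 \in \mathcal{PS}$, then $n-4$ and $n$ lie at distance $2$ from the perfect square $n-2$, which is impossible since positive perfect squares never differ by $2$; if $n-1 \in \mathcal{PS}$, then $n$ lies at distance $1$ and $n-4$ at distance $3$ from $n-1$, both impossible for $n \ge 17$. Hence $S\left(\frac{n-1}{2}, \frac{n-5}{2}, 2\right)$ is TI, so the maximum Wiener index over TI trees of order $n$ is attained.

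Let $T^*$ be a TI tree of order $n$ attaining the maximum Wiener index; by Lemma~\ref{odd_char_lemma}, $T^*$ is of type A or type B. The second step treats type B. Here $v$ has degree three with one pendent path of length two and two further components $T_1, T_2$ of orders $\frac{n-1}{2}$ and $\frac{n-5}{2}$; applying Lemma~\ref{arm_lemma} to straighten $T_1$ and $T_2$ into pendent paths yields $W(T^*) \le W\left(S\left(\frac{n-1}{2}, \frac{n-5}{2}, 2\right)\right)$, with equality precisely when $T^* \cong S\left(\frac{n-1}{2}, \frac{n-5}{2}, 2\right)$. Since $T^*$ is maximal among TI trees and $S\left(\frac{n-1}{2}, \frac{n-5}{2}, 2\right)$ is itself a TI tree, maximality forces $W(T^*) = W\left(S\left(\frac{n-1}{2}, \frac{n-5}{2}, 2\right)\right)$, so a type B extremal tree must be isomorphic to it.

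The third step rules out type A. By Lemmas~\ref{odd_type_a_1} and \ref{odd_type_a_2}, the largest type A TI tree is $C_{n-2}\left(\frac{n-1}{2}, \frac{n-3}{2} + \sqrt{n-2}\right)$ when $n-2 \in \mathcal{PS}$ and $C_{n-2}\left(\frac{n+1}{2}, \frac{n-3}{2} + \sqrt{n-1}\right)$ when $n-1 \in \mathcal{PS}$. Subtracting the formulas of Lemmas~\ref{odd_3_wiener_lemma} and \ref{odd_4_wiener_lemma} from that of Lemma~\ref{odd_2_wiener_lemma} gives
\[
    W\left(S\left(\tfrac{n-1}{2}, \tfrac{n-5}{2}, 2\right)\right) - W\left(C_{n-2}\left(\tfrac{n-1}{2}, \tfrac{n-3}{2} + \sqrt{n-2}\right)\right) = \sqrt{n-2} + 1 > 0
\]
in the first case and
\[
    W\left(S\left(\tfrac{n-1}{2}, \tfrac{n-5}{2}, 2\right)\right) - W\left(C_{n-2}\left(\tfrac{n+1}{2}, \tfrac{n-3}{2} + \sqrt{n-1}\right)\right) = \sqrt{n-1} > 0
\]
in the second. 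In either case the type B starlike tree strictly beats every type A TI tree, so if $T^*$ were of type A we would get $W(T^*) < W\left(S\left(\frac{n-1}{2}, \frac{n-5}{2}, 2\right)\right) \le W(T^*)$, a contradiction. Combining with the previous step forces $T^* \cong S\left(\frac{n-1}{2}, \frac{n-5}{2}, 2\right)$, establishing both optimality and uniqueness.

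The routine parts are the straightening argument and the two subtractions, both of which reduce to lemmas already in hand. The genuinely delicate point is the first step: one must verify that the candidate starlike tree is TI, which hinges on the elementary but essential observation that the given hypotheses preclude $n-4$ and $n$ from being perfect squares. A secondary subtlety is logical rather than computational: because the intermediate trees produced by Lemma~\ref{arm_lemma} need not be TI, the whole argument depends on $S\left(\frac{n-1}{2}, \frac{n-5}{2}, 2\right)$ itself being a TI tree, so that the two strict inequalities against the type A caterpillars can legitimately be turned against a maximizer ranging only over TI trees.
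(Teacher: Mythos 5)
Your proposal is correct and follows essentially the same route as the paper: establish that $S\left(\frac{n-1}{2},\frac{n-5}{2},2\right)$ is TI via Lemma~\ref{odd_2_ti_lemma}, invoke the type A/type B dichotomy of Lemma~\ref{odd_char_lemma}, beat every type A competitor using Lemmas~\ref{odd_type_a_1}, \ref{odd_type_a_2} and the Wiener index formulas, and handle type B by straightening with Lemma~\ref{arm_lemma}. The only difference is that you spell out the (correct) elementary verification that $n-4$ and $n$ cannot be perfect squares and the explicit differences $\sqrt{n-2}+1$ and $\sqrt{n-1}$, which the paper leaves implicit.
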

\begin{proof}
    Since $n - 2$ or $n - 1$ is a perfect square, it trivially follows that neither $n - 4$ nor $n$ is a perfect square. Hence, together with the assumption here, Lemma \ref{odd_2_ti_lemma} implies that $S\left( \frac{n - 1}{2}, \frac{n - 5}{2}, 2\right)$ is TI. Moreover, from Lemmas~\ref{odd_2_wiener_lemma}, \ref{odd_3_wiener_lemma}, \ref{odd_4_wiener_lemma}, \ref{odd_type_a_1} and \ref{odd_type_a_2}, it follows that $S\left( \frac{n - 1}{2}, \frac{n - 5}{2}, 2\right)$ has a greater Wiener index than any TI tree of type A of order $n$. Therefore, to complete the proof, it suffices to show that $S\left( \frac{n - 1}{2}, \frac{n - 5}{2}, 2\right)$ is the unique TI tree of order $n$ with the greatest Wiener index among the TI trees of type B. Let $T$ be any TI tree of type B of order $n$. By applying Lemma \ref{arm_lemma} and Corollary \ref{majorization_lemma} in the same way as in Proposition \ref{odd_th_case_1}, we have $W(T) \le W\left( S\left( \frac{n - 1}{2}, \frac{n - 5}{2}, 2\right) \right)$, with equality holding if and only if $T \cong S\left( \frac{n - 1}{2}, \frac{n - 5}{2}, 2\right)$.
\end{proof}

Of course, if $2n - 6$ or $2n - 2$ is a perfect square, then $S\left( \frac{n - 1}{2}, \frac{n - 5}{2}, 2\right)$ cannot be the solution because it is not TI, as shown in Lemma \ref{odd_2_ti_lemma}. In this case, a different argumentation should be used to obtain the solution. We thus finalize the proof of Theorem \ref{odd_th} through the following proposition.

\begin{proposition}\label{odd_th_cases_3_4}
   Let $n \ge 17$ be odd with $\{n - 2,n - 1\}\cap\mathcal{PS}\neq\emptyset$ and $\{2n - 6,2n - 2\}\cap\mathcal{PS}\neq\emptyset$. Then $C_{n - 2}\left( \frac{n - 1}{2}, \frac{n - 3}{2} + \sqrt{n - 2} \right)$ is the unique TI tree of order $n$  attaining the greatest Wiener index if $n - 2\in\mathcal{PS}$, and $C_{n - 2}\left( \frac{n + 1}{2}, \frac{n - 3}{2} + \sqrt{n - 1} \right)$ is the unique TI tree of order $n$  attaining the greatest Wiener index if $n - 1\in\mathcal{PS}$.
\end{proposition}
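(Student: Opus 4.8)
The plan is to exploit the dichotomy of Lemma~\ref{odd_char_lemma}, which guarantees that any TI tree of order $n$ attaining the maximum Wiener index is of type A or type B. If $n-2\in\mathcal{PS}$, then Lemma~\ref{odd_type_a_1} shows that the unique type~A tree of maximum Wiener index is $C_{n-2}(\frac{n-1}{2},\frac{n-3}{2}+\sqrt{n-2})$, and if $n-1\in\mathcal{PS}$, then Lemma~\ref{odd_type_a_2} shows it is $C_{n-2}(\frac{n+1}{2},\frac{n-3}{2}+\sqrt{n-1})$; by Lemma~\ref{odd_3_ti_lemma} or Lemma~\ref{odd_4_ti_lemma} this tree is genuinely TI. Hence it suffices to prove that every TI tree of type B has strictly smaller Wiener index than the relevant type~A tree; the asserted uniqueness then follows from the uniqueness statements already contained in Lemmas~\ref{odd_type_a_1} and~\ref{odd_type_a_2}.

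Next I would locate a forced branching vertex in any type~B TI tree. Fix such a tree $T$, with $v$ of degree three whose pendent path of length two is $v z_1 z_2$ and whose other two components have $\frac{n-1}{2}$ and $\frac{n-5}{2}$ vertices. Since $n-2$ or $n-1$ is a perfect square, neither $n-4$ nor $n$ can be one, so Lemma~\ref{odd_2_ti_lemma} together with the hypothesis $\{2n-6,2n-2\}\cap\mathcal{PS}\neq\emptyset$ tells us that $S(\frac{n-1}{2},\frac{n-5}{2},2)$ is not TI, and its proof pinpoints the unique transmission coincidence: $z_2$, whose transmission equals $\beta+(2n-6)$ \emph{regardless of the shapes of the two arms}, collides with a vertex of the long arm when $2n-6\in\mathcal{PS}$ and with a vertex of the short arm when $2n-2\in\mathcal{PS}$ (exactly one of these occurs for $n\ge 17$). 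Applying Lemma~\ref{neighbors_lemma} along a straight initial segment of an arm, the transmission increments are $2i-1$ on the long arm and $2i+3$ on the short arm, so such a segment produces the transmissions $\beta+i^2$ and $\beta+j(j+4)$, respectively. Consequently, if the offending arm were left straight up to the critical vertex, the collision with $z_2$ would reappear; therefore the offending arm must branch within distance $\sqrt{2n-6}-1$ (resp.\ $\sqrt{2n-2}-3$) of $v$.

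The decisive step is then a penalty estimate via Lemma~\ref{branching lemma}. The vertex $v$ contributes the single product $\frac{n-1}{2}\cdot\frac{n-5}{2}\cdot 2=\frac{(n-1)(n-5)}{2}$, exactly as in $S(\frac{n-1}{2},\frac{n-5}{2},2)$, so $W(T)$ equals $W(S(\frac{n-1}{2},\frac{n-5}{2},2))$ minus the contributions of all remaining branching vertices. Letting $w$ be the first branching vertex on the offending arm, at distance $d$ from $v$, its contribution is minimized when $w$ has degree three with a pendent leaf, equalling $(\frac{n-1}{2}+d)(\frac{n-1}{2}-d-1)$ in the long-arm case and $(\frac{n+3}{2}+d)(\frac{n-7}{2}-d)$ in the short-arm case. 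Both expressions decrease in $d$, and substituting the maximal admissible $d$ together with $\frac{n-1}{2}-\sqrt{2n-6}\ge 2$ and $\frac{n-1}{2}-\sqrt{2n-2}\ge 2$ (both valid for $n\ge 17$) gives a contribution of at least $n-3$. Since all other branching contributions are nonnegative, $W(T)\le W(S(\frac{n-1}{2},\frac{n-5}{2},2))-(n-3)$. On the other hand, Lemmas~\ref{odd_2_wiener_lemma},~\ref{odd_3_wiener_lemma} and~\ref{odd_4_wiener_lemma} show that the type~A maximizer lies only $\sqrt{n-2}+1$ (resp.\ $\sqrt{n-1}$) below $W(S(\frac{n-1}{2},\frac{n-5}{2},2))$. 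As $n-3$ exceeds both of these for $n\ge 17$, every type~B TI tree is strictly dominated by the type~A maximizer, which is therefore the unique overall maximizer.

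The hard part is the localization in the second step. The end comparison is routine once the branch is known to lie within $O(\sqrt{n})$ of $v$, but justifying that localization hinges on the observation that $\Tr(z_2)=\beta+(2n-6)$ is determined by the component sizes alone, so that a straight arm is \emph{compelled} to reproduce the forbidden transmission; one must check carefully that branching only beyond the critical vertex does not dissolve the collision, since the straight initial segment already fixes the colliding value, and that the short-arm case is handled in exactly the same manner.
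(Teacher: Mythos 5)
Your proposal is correct, and its skeleton coincides with the paper's: reduce to type~B trees via Lemma~\ref{odd_char_lemma} together with Lemmas~\ref{odd_type_a_1} and~\ref{odd_type_a_2}, then exploit the fact that $\Tr(u_2)=\beta+(2n-6)$ is fixed by the component sizes alone, so that a straight initial segment of the offending arm would reproduce the forbidden collision and the arm is therefore forced to branch within distance $\sqrt{2n-6}-1$ (resp.\ $\sqrt{2n-2}-3$) of $v$ --- this localization step, which you rightly identify as the crux, is exactly the paper's. Where you genuinely diverge is the final quantitative comparison. The paper pushes $T$ via Lemma~\ref{arm_lemma} and Corollary~\ref{majorization_lemma} to an explicit extremal configuration $T'$ (a leaf attached to $S(\tfrac{n-3}{2},\tfrac{n-5}{2},2)$ at the critical distance), computes $W(T')$ exactly with Lemma~\ref{branching lemma}, and checks the inequality $n^2-12n-4\sqrt{n-2}+4\sqrt{2n-6}+23>0$; you instead read off from Lemma~\ref{branching lemma} a direct lower bound of $n-3$ on the Wiener penalty incurred by the forced extra branching vertex (minimizing its contribution over degree and component split, then over the admissible distances $d$) and compare it with the gaps $\sqrt{n-2}+1$ and $\sqrt{n-1}$ supplied by Lemmas~\ref{odd_2_wiener_lemma}, \ref{odd_3_wiener_lemma} and~\ref{odd_4_wiener_lemma}. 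Your route avoids the intermediate tree $T'$ and treats the $2n-6$ and $2n-2$ subcases uniformly (the paper only does the former explicitly and declares the latter analogous), at the cost of the small but necessary optimization check that a degree-three branch with a pendent leaf minimizes the contribution; I verified the arithmetic of your bounds ($\tfrac{n-1}{2}-\sqrt{2n-6}\ge 2$ and $\tfrac{n-1}{2}-\sqrt{2n-2}\ge 2$ for $n\ge 17$, hence a penalty of at least $n-3$, which indeed exceeds both gaps) and both approaches reach the same conclusion.
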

\begin{proof}
    Since $6 \sqrt{n - 2} + 21 > 6 \sqrt{n - 1} + 15$, Lemmas~\ref{odd_3_wiener_lemma}, \ref{odd_4_wiener_lemma}, \ref{odd_type_a_1} and \ref{odd_type_a_2} imply that to complete the proof, it is enough to show that every TI tree $T$ of type B of order $n$ satisfies
    \begin{equation}\label{aux_7}
        W(T) < \frac{n^3 - 3n^2 + 17n  - 6 \sqrt{n - 2} - 21}{6} .
    \end{equation}
    We will now provide the proof only for the case when $2n - 6$ is a perfect square, since the case when $2n - 2$ is a perfect square can be resolved analogously.

    Let $T$ be any TI tree of type B of order $n$ with $v\in V(T)$ having the minimum transmission. Also, let $T_1$, $T_2$ and $T_3$ be the components of $T - v$ so that $|T_1| = \frac{n - 1}{2}$, $|T_2| = \frac{n - 5}{2}$ and $|T_3| = 2$. Moreover, let $v u_1 u_2$ be the pendent path of length two attached to $v$. If $T_1$ is a path from $v$, then the vertex on this path with distance $\sqrt{2n - 6}$ to $v$ has the same transmission in $T$ as $u_2$. A similar conclusion can be made if the branching vertex in $T$ from $T_1$ closest to $v$ has a distance to $v$ of at least $\sqrt{2n - 6}$. Therefore, if we denote by $w$ the branching vertex in $T$ from $T_1$ closest to $v$, we have $d_T(w, v) \le \sqrt{2n - 6} - 1$.

    By applying Lemma \ref{arm_lemma} and Corollary \ref{majorization_lemma} in the same manner as in Lemma \ref{odd_type_a_1}, we conclude that $W(T) \le W(T')$, where $T'$ arises from $S(\frac{n - 3}{2}, \frac{n - 5}{2}, 2)$ by attaching a leaf to the vertex on the pendent path of length $\frac{n - 3}{2}$ with the distance $\sqrt{2n - 6} - 1$ from the  branching vertex in it.  By Lemma \ref{branching lemma}, we have
    \[
        W(T') = \frac{2n^3 - 9n^2 + 70n - 12 \sqrt{2n - 6} - 111}{12}.
    \]
    Therefore, to obtain \eqref{aux_7}, it suffices to prove that
    \[
        W(T') < \frac{n^3 - 3n^2 + 17n  - 6 \sqrt{n - 2} - 21}{6},
    \]
    i.e.,
    \[
        n^2 - 12n - 4\sqrt{n - 2} + 4\sqrt{2n - 6} + 23 > 0,
    \]
    which trivially holds when $n \ge 17$.
\end{proof}

Theorem \ref{odd_th} now follows from Propositions \ref{odd_th_case_1}, \ref{odd_th_case_2} and \ref{odd_th_cases_3_4}, as well as the computational results obtained by the \texttt{SageMath} script given in Appendix \ref{sage_script}.

\section{Maximal trees of even order}\label{sc_even}

\ \indent In this section we solve the Wiener index maximization problem on the set of TI trees of order $n$, for any even $n$, apart from the values $n \ge 30$ with $4n - 7 \in \mathcal{PS}$, or $\{4n - 15, 4n - 7\} \cap \mathcal{PS} = \emptyset$ and $8n - 15 \in \mathcal{PS}$. We begin with the next proposition that can easily be verified by using the \texttt{SageMath} script given in Appendix \ref{sage_script}.

\begin{proposition}\label{even_secondary} For $n\in \{14,22,24\}$, the tree $T^*$ uniquely attains the maximum Wiener index among all TI trees of order $n$, where $T^*\cong \begin{cases}
C_9(3, 1; 5, 1; 5, 3), & n=14;\\
C_{17}(11, 2; 13, 3), & n=22;\\
C_{21}(11, 2; 12, 1), & n=24.
\end{cases}$
\end{proposition}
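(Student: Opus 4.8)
The plan is to carry out a finite exhaustive verification, since the statement concerns only the three fixed orders $n \in \{14, 22, 24\}$ and makes no asymptotic claim. For each such $n$ there is no need to invoke any structural reduction: one simply enumerates every tree of order $n$, filters out those that fail to be TI, and maximizes the Wiener index over the survivors. Concretely, I would first generate the complete list of pairwise non-isomorphic trees of order $n$ using \texttt{gentreeg} from \texttt{nauty} \cite{McKayPip2014}, which outputs each tree exactly once. The counts are modest — on the order of $10^3$ trees for $n = 14$ and roughly $10^6$ for $n = 24$ — so the whole list is comfortably within reach of a direct loop.

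Next, for each generated tree $T$ I would compute the transmission $\Tr_T(v)$ of every vertex $v$ by a breadth-first search from $v$ and summing distances, which is an $O(n^2)$ computation in total. The tree is declared TI precisely when the values $\{\Tr_T(v) : v \in V(T)\}$ are pairwise distinct, i.e.\ when $|\Tr(T)| = n$. For each TI tree I would then record its Wiener index, computed either directly as a sum of pairwise distances or, more cheaply, as $\tfrac12 \sum_{v} \Tr_T(v)$ using \eqref{intro_aux_1}. Among all TI trees of order $n$ one then selects those of maximum Wiener index and checks that exactly one remains. To make the conclusion human-checkable, I would canonically label each tree (again via \texttt{nauty}), construct the explicit variant caterpillar $T^*$ — namely $C_9(3,1;5,1;5,3)$, $C_{17}(11,2;13,3)$, and $C_{21}(11,2;12,1)$ — and confirm by an isomorphism test that the unique maximizer coincides with $T^*$ in each case.

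The main obstacle is a mild one of reliability rather than difficulty: one must ensure the enumeration is genuinely exhaustive (that \texttt{gentreeg} lists every tree once and only once), that the TI test and Wiener-index routine faithfully implement the definitions, and that the final comparison against $T^*$ is a true isomorphism check rather than a superficial coincidence of some invariant. For $n = 24$ the roughly one million trees make a naive implementation somewhat slow but still entirely tractable, and this is exactly what the \texttt{SageMath} script in Appendix \ref{sage_script} performs. If efficiency were a concern, one could prune the search using the structural results already established — Lemma \ref{min_deg_3_lemma} forces the minimum-transmission vertex to have degree at least three, Lemma \ref{compo} forces the component sizes of $T - v$ to be pairwise distinct, and Lemma \ref{arm_lemma} together with Corollary \ref{majorization_lemma} pushes the extremal candidates toward caterpillar-like shapes — thereby restricting attention to a far smaller family. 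For these three small orders, however, the unpruned enumeration already settles the matter, so I would simply rely on the direct computational verification.
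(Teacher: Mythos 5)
Your proposal is correct and matches the paper's approach: Proposition \ref{even_secondary} is established there purely by the exhaustive computational verification described in Appendix \ref{sage_script}, with the only difference being that the paper generates the TI trees of order at most $24$ directly via the algorithm of \cite{StoDam2025} rather than enumerating all trees with \texttt{gentreeg} and filtering (your count of roughly $10^6$ trees for $n=24$ understates the true figure of about $3.9\times 10^7$, but the computation remains tractable either way).
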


 In view of Proposition \ref{even_secondary},  our main result is embodied in the following.

\begin{theorem}\label{even_th}
   Let  $n\ge 16$ with $n\notin\{22,24\}$, and suppose that neither of the following two conditions holds:
    \begin{enumerate}[label=\textbf{(\arabic*)}]
        \item $4n - 7 \in \mathcal{PS}$;
        \item $\{4n - 15, 4n - 7\} \cap \mathcal{PS} = \emptyset$ and $8n - 15 \in \mathcal{PS}$.
    \end{enumerate}
    Then there is a unique TI tree $T^*$ of order $n$ that attains the maximum Wiener index, with
    \begin{enumerate}[label=\textbf{(\roman*)}]
        \item $T^* \cong S\left( \frac{n}{2} - 1, \frac{n}{2} - 2, 2 \right)$, if  $\{4n - 15,4n - 7,8n - 23, 8n - 15\}\cap\mathcal{PS}=\emptyset$;
        \item $T^* \cong C_{n - 3}\left(\tfrac{n}{2} - 1, 2; \frac{n}{2} + \tfrac{\sqrt{4n - 15} - 1}{2} - 2, 1\right)$, if $4n - 15\in\mathcal{PS}$ and $\{8n - 23, 8n - 11 - 4 \sqrt{4n - 15},8n - 27 + 4 \sqrt{4n - 15}\}\cap\mathcal{PS}=\emptyset$;
        \item $T^* \cong C_{n - 3}\left(\tfrac{n}{2} - 1, 2; \frac{n}{2} + \tfrac{\sqrt{4n - 15} - 1}{2} - 3, 1\right)$, if $4n - 15\in\mathcal{PS}$ and $\{8n - 23, 8n - 11 - 4 \sqrt{4n - 15},8n - 27 + 4 \sqrt{4n - 15}\}\cap\mathcal{PS}\neq\emptyset$;
        \item $T^* \cong C_{n - 3}\left( \tfrac{n}{2} - 1, 2; \tfrac{n}{2} + \tfrac{\sqrt{8n - 23} - 1}{2} - 2, 1\right)$, if $\{ 4n - 15, 4n - 7 \} \cap \mathcal{PS} = \emptyset$ and $8n - 23 \in \mathcal{PS}$.
    \end{enumerate}
\end{theorem}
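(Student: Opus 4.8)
The plan is to follow the same overall scheme used for the odd case in Theorem~\ref{odd_th}, adapted to the parity of $n$. First I would establish the transmission-irregularity of each of the four candidate trees in (i)--(iv) under its stated hypotheses, playing the role that Lemmas~\ref{odd_2_ti_lemma}--\ref{odd_4_ti_lemma} played before. Fixing the branching vertex $z_0$ of minimum transmission as a reference and writing $\Tr(z_0) = \beta$, repeated use of Lemma~\ref{neighbors_lemma} gives closed forms for the transmissions along each pendent path; for the starlike tree $S\!\left(\frac{n}{2}-1,\frac{n}{2}-2,2\right)$ one finds $\Tr(x_i) = \beta + i(i+1)$ on the longest arm, $\Tr(y_j) = \beta + j(j+3)$ on the middle arm, and $\Tr(z_1) = \beta + (n-4)$, $\Tr(z_2) = \beta + (2n-6)$ on the short arm. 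Transmission-irregularity then reduces to showing that a short list of quadratic Diophantine equations has no admissible solution: the cross-arm collisions $i(i+1) = j(j+3)$ are impossible by a difference-of-squares argument, while the collisions with $z_1$ and $z_2$ are governed exactly by whether $4n-15$, $4n-7$, $8n-23$, or $8n-15$ is a perfect square, which is precisely how the condition in (i) arises. The caterpillar candidates are handled the same way, the quadratic-residue-modulo-$8$ trick of Lemmas~\ref{odd_3_ti_lemma} and \ref{odd_4_ti_lemma} ruling out the remaining collisions. In parallel I would compute the Wiener index of each candidate from Lemma~\ref{branching lemma}, producing explicit cubic-in-$n$ formulas for later comparison.

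Next I would carry out the structural reduction. Let $T$ be an extremal TI tree and $v$ its vertex of minimum transmission; by Lemma~\ref{min_deg_3_lemma} we have $d_T(v) \ge 3$, by Lemma~\ref{compo} the components of $T - v$ have pairwise distinct orders, and the transmission inequality from Lemma~\ref{neighbors_lemma} forces each component to have at most $\frac{n}{2}-1$ vertices, where equality to $\frac{n}{2}$ is now excluded precisely because $n$ is even. Writing the component orders as $\frac{n}{2}-1 \ge a_1 > a_2 > \cdots > a_k \ge 1$, the parity of $n$ rules out the odd-case base tree: a degree-three $v$ cannot carry a pendant leaf together with the two largest admissible components, since that would force $a_1 = a_2 = \frac{n}{2}-1$, contradicting distinctness, so the smallest arm has order at least $2$. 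Repeatedly applying Lemma~\ref{arm_lemma} and Corollary~\ref{majorization_lemma} and comparing the resulting Wiener bound against one of the guaranteed-TI caterpillars, exactly as in Lemma~\ref{odd_char_lemma}, I would bound $\sum_{i \ge 3} a_i$, concluding that $d_T(v) = 3$ and that the smallest component has order $2$ or $3$. This yields a clean dichotomy: \emph{type A}, with component orders $\frac{n}{2}-1,\frac{n}{2}-2,2$, and \emph{type B}, with orders $\frac{n}{2}-1,\frac{n}{2}-3,3$.

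Then I would maximize within each type. For type A, if none of $4n-15,4n-7,8n-23,8n-15$ is a perfect square, the starlike tree $S\!\left(\frac{n}{2}-1,\frac{n}{2}-2,2\right)$ is itself TI and, by the arm and majorization reductions, is the unique maximizer, giving case~(i). If instead $4n-15 \in \mathcal{PS}$ (respectively $8n-23 \in \mathcal{PS}$), the starlike tree is no longer TI because some path vertex $x_i$ collides in transmission with $z_1$ (respectively $z_2$); as in Lemma~\ref{odd_type_a_1}, the branching vertex on the longest arm is then forced to lie within a bounded distance of $v$, and a further majorization pins the extremal TI tree down to a specific variant caterpillar $C_{n-3}(\cdot)$. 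The exact attachment position of the remaining leaf, and in particular the $\pm 1$ shift distinguishing case~(ii) from case~(iii), is dictated by whether the secondary quantities $8n-23$, $8n-11-4\sqrt{4n-15}$, $8n-27+4\sqrt{4n-15}$ are perfect squares, i.e.\ by whether the ``ideal'' position would create a further transmission collision. For type B, I would use the Wiener formulas to show that its best member has strictly smaller Wiener index than the relevant type~A candidate, so type~B never wins. A final comparison of the surviving closed-form Wiener indices across the number-theoretic cases then selects the unique maximizer in each of (i)--(iv).

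The main obstacle is the type~A caterpillar analysis in cases~(ii)--(iv): tracking the transmissions of both branching vertices and of both pendent ends simultaneously, and proving that the shifted attachment position is \emph{both} forced (every closer position breaks transmission-irregularity) \emph{and} maximal (no TI competitor of the same type does better). This is where the intricate secondary perfect-square conditions enter, and it is also where the excluded hypotheses of the theorem originate: when the collision occurs on the middle arm rather than the longest arm (the cases $4n-7 \in \mathcal{PS}$, or $8n-15 \in \mathcal{PS}$ with $4n-15,4n-7 \notin \mathcal{PS}$), the same repair is no longer controlled by this construction and need not yield a unique maximizer, so those cases are set aside. Finally, as in the odd case, the finitely many small orders together with the sporadic values $n \in \{14,22,24\}$ settled in Proposition~\ref{even_secondary} would be verified computationally, completing the proof.
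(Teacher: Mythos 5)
Your overall scheme matches the paper's: establish the TI characterizations of the four candidates via Lemma~\ref{neighbors_lemma} (your transmission formulas for $S(\frac{n}{2}-1,\frac{n}{2}-2,2)$ agree with Lemma~\ref{even lemma-1}), compute their Wiener indices via Lemma~\ref{branching lemma}, reduce the structure of an extremal tree around its minimum-transmission vertex $v$, and then pin down the attachment position of the extra branching vertex using the secondary perfect-square conditions. However, there is a genuine gap in your structural reduction, precisely at the point where you deviate from the paper. You propose a dichotomy between ``type A'' (components of orders $\frac{n}{2}-1,\frac{n}{2}-2,2$) and ``type B'' (orders $\frac{n}{2}-1,\frac{n}{2}-3,3$), and plan to kill type B by showing ``its best member has strictly smaller Wiener index than the relevant type A candidate.'' The best upper bound that Lemma~\ref{arm_lemma} and Corollary~\ref{majorization_lemma} give for a type-B tree is $W\bigl(S\bigl(\frac{n}{2}-1,\frac{n}{2}-3,3\bigr)\bigr)=\frac{2n^3-9n^2+70n-108}{12}$, and this \emph{exceeds} the Wiener index of the type-A caterpillar candidates in cases (ii)--(iv) (e.g.\ for $n=34$: $5873$ versus $5834$ for $C_{31}(16,2;19,1)$). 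So the comparison you propose fails, and the analogue of Lemma~\ref{odd_char_lemma} does not carry over ``exactly'' to the even case.

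The paper closes this gap in Lemma~\ref{even_arm_lemma} with an additional TI-forced structural observation: if the components of $T-v$ have orders $\frac{n}{2}-1$, $\frac{n}{2}-3$ and total $3$ in the rest, then the neighbour $u$ of $v$ inside the largest component must itself be a branching vertex --- otherwise the vertex at distance $2$ from $v$ on that arm and the neighbour of $v$ in $T_2$ both have transmission $\Tr(v)+6$, violating transmission irregularity. (Equivalently, $d(d+1)=e(e+5)$ has the solution $d=2$, $e=1$, so $S(\frac{n}{2}-1,\frac{n}{2}-3,3)$ is never TI, and the collision persists unless $u$ branches.) This forces $W(T)\le W\bigl(C_{n-4}(\frac{n}{2}-2,3;\frac{n}{2}-1,1)\bigr)=\frac{n^3-6n^2+41n-36}{6}$, which \emph{is} small enough to contradict maximality. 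Without this step your argument cannot conclude that the smallest component of $T-v$ has order exactly $2$. A secondary, smaller omission: in Proposition~\ref{even_th_case_3} the sub-case $d_T(w,v)=k-1$ requires introducing a second forced branching vertex and comparing against the three-attachment caterpillars of Lemmas~\ref{even_wiener_lemma_6}--\ref{even_wiener_lemma_5}; you gesture at this but do not supply the mechanism.
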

\begin{remark}
   Note that $4n - 15$ and $4n - 7$ cannot be a perfect square at the same time, and neither can $8n - 23$ and $8n - 15$. Also, observe that if $n\ge 14$ is even with $4n - 7 \in \mathcal{PS}$, then we have $n \in \{14, 22\}$ or $n \ge 32$. On the other hand, if $n\geq 14$ is even with $\{4n - 15, 4n - 7\} \cap \mathcal{PS} = \emptyset$ and $8n - 15 \in \mathcal{PS}$, then we get $n \ge 30$. With all of this in mind, Proposition~\ref{even_secondary} and Theorem~\ref{even_th} solve the TI tree Wiener index maximization problem for any even order $n \ge 14$, apart from the values $n \ge 30$ such that $4n - 7 \in \mathcal{PS}$, or $\{4n - 15, 4n - 7\} \cap \mathcal{PS} = \emptyset$ and $8n - 15 \in \mathcal{PS}$.
\end{remark}
\begin{remark}
    It can be trivially checked via the program \texttt{gentreeg} from the package \texttt{nauty} that there is no TI tree of even order $n < 14$.
\end{remark}

We begin with the following short lemma.

\begin{lemma}\label{even lemma-1}
    For any even $n \ge 14$, $S\left( \frac{n}{2} - 1, \frac{n}{2} - 2, 2 \right)$ is $TI$ if and only if $\{ 4n - 15, 4n - 7, \linebreak 8n - 23, 8n - 15 \} \cap \mathcal{PS} = \emptyset$.
\end{lemma}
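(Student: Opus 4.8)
**

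The goal is to characterize exactly when the starlike tree $S\left(\frac{n}{2}-1, \frac{n}{2}-2, 2\right)$ is transmission irregular. Looking at the analogous odd case (Lemma~\ref{odd_2_ti_lemma}), the strategy is clear: compute all vertex transmissions in terms of a base value $\beta = \Tr(z_0)$ at the branching vertex, then determine precisely when two transmissions coincide. The transmission-irregularity fails exactly when some pair of vertices shares a transmission, and each such coincidence will translate into a Diophantine condition equivalent to one of the four expressions $4n-15$, $4n-7$, $8n-23$, $8n-15$ being a perfect square.

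**Plan**

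First I would set up coordinates. Let $z_0$ be the branching vertex with the three pendent paths of lengths $\frac{n}{2}-1$, $\frac{n}{2}-2$, and $2$; label their vertices $x_1, \ldots, x_{n/2-1}$, then $y_1, \ldots, y_{n/2-2}$, and finally $z_1, z_2$. By repeatedly applying Lemma~\ref{neighbors_lemma} (as in Lemma~\ref{odd_2_ti_lemma}), I would compute each transmission as $\beta$ plus a quadratic expression in the path-position index: along a pendent path, moving one step outward from a vertex at distance $d$ from $z_0$ increases the transmission by $n - 2(\text{size of the subtree hanging off that far end})$, which telescopes into a clean quadratic. I expect forms like $\Tr(x_i) = \beta + i^2$, $\Tr(y_j) = \beta + j(j+2)$ (or a similar shift), and $\Tr(z_1) = \beta + c_1$, $\Tr(z_2) = \beta + c_2$ for explicit constants $c_1, c_2$ depending on $n$. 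The exact shifts must be computed carefully, since the three paths have different lengths.

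Next, the TI property holds if and only if all these quadratic expressions are pairwise distinct. The coincidences among the long-path indices ($x$ vs.\ $y$, and $x$ or $y$ vs.\ themselves) I expect to rule out automatically using the same algebraic identities as in the odd case — differences of shifted squares like $(i+a)^2 = (j+b)^2 + (\text{small constant})$ having no integer solutions. The genuinely controlling coincidences are those involving the two short-path vertices $z_1$ and $z_2$: setting $\Tr(z_1)$ or $\Tr(z_2)$ equal to some $\Tr(x_i)$ or $\Tr(y_j)$ produces equations of the form $i^2 = c$ or $j(j+2) = c$, which have a solution in the allowed index range precisely when $4c+1$ or a related linear-in-$n$ quantity is a perfect square. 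Completing the square on $j(j+2) = c$ gives $(j+1)^2 = c+1$, so solvability is governed by whether $c+1$ is a perfect square; tracking the four constants $c_1, c_2$ through these completions is what produces exactly the set $\{4n-15, 4n-7, 8n-23, 8n-15\}$.

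**Main obstacle**

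The hard part will be the careful bookkeeping: getting the quadratic shifts exactly right for paths of lengths $\frac{n}{2}-1$, $\frac{n}{2}-2$, and $2$, and then verifying that the ``automatic'' non-coincidences (the $x$-vs-$y$ comparisons) really do hold for all indices, so that only the four listed square conditions survive. I would handle the $x$-vs-$y$ case by reducing to an identity such as $(i+1)^2 - (j+1)^2 \in \{-1, 0, 1\}$ having no nontrivial solutions, mirroring the argument in Lemma~\ref{odd_2_ti_lemma}. The four square conditions should then emerge as: $i^2 = \Tr(z_1) - \beta$ solvable $\iff$ one designated expression is a square, and similarly for the three other vertex--short-path pairings, matching $4n-15$, $4n-7$, $8n-23$, $8n-15$ respectively. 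The equivalence must be stated as an ``if and only if,'' so I would confirm both directions: each square condition forces an index in range (using $n \ge 14$ to guarantee the root lies within the path length), and conversely any coincidence forces one of the four squares.
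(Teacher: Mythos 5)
Your plan is correct and follows essentially the same route as the paper: compute all transmissions relative to $\beta=\Tr(z_0)$ via Lemma~\ref{neighbors_lemma}, rule out the arm-versus-arm coincidences by a factorization argument, and reduce the remaining coincidences with $z_1,z_2$ (whose offsets are $n-4$ and $2n-6$) to the four perfect-square conditions by completing the square. The only detail to fix in execution is the quadratic increments, which for arms of lengths $\frac{n}{2}-1$ and $\frac{n}{2}-2$ come out as $i(i+1)$ and $j(j+3)$ rather than $i^2$ and $j(j+2)$ --- exactly the ``careful bookkeeping'' you already flag.
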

\begin{proof}
     Let $T=S\left( \frac{n}{2} - 1, \frac{n}{2} - 2, 2 \right)$ with $v\in V(T)$ of degree $3$. We trivially observe that $v$ has the smallest transmission in $T$. Now, assume that $\Tr(v)=\beta$. From the structure of $T$, by Lemma~\ref{neighbors_lemma}, we have $\Tr(T)-\beta = A_0\cup A_1\cup A_2$ where $A_0=\{0,n-4,2n-6\}$, $A_1 = \{ x^2 + 3x \mid x\in[\frac{n}{2} - 2]\}$ and $A_2 = \{ y^2 + y \mid y \in [\frac{n}{2} - 1]\}$.

    We first claim that $A_1\cap A_2=\emptyset$. If $x^2+3x=y^2+y$ with $x\in[\frac{n}{2} - 2]$ and $y\in [\frac{n}{2} - 1]$, we have $y>x$ with $(y-x)(y+x+1)=2x$. Thus $(y-x)(y+x+1)\geq 2x+2>2x$ as a clear contradiction. Therefore, this claim holds, which means that $T$ is TI if and only if $A_0 \cap A_1 = \emptyset$ and $A_0 \cap A_2 = \emptyset$. This holds, by some routine checking, if and only if none of $4n - 15$, $4n - 7$, $8n - 23$ and $8n - 15$ is a perfect square.
\end{proof}

With this in mind, we settle case (i) of Theorem \ref{even_th} through the following proposition.

\begin{proposition}\label{even_th_case_1}
    For any even $n \ge 16$ such that $\{4n - 15, 4n - 7, 8n - 23, 8n - 15\} \cap \mathcal{PS} = \emptyset$, $S\left( \frac{n}{2} - 1, \frac{n}{2} - 2, 2 \right)$ is the unique TI tree of order $n$ that attains the maximum Wiener index.
\end{proposition}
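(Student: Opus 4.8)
The plan is to follow the blueprint of Proposition~\ref{odd_th_case_1} almost verbatim, replacing the odd-order target $S(\frac{n-1}{2}, \frac{n-3}{2}, 1)$ with the even-order candidate $S(\frac{n}{2}-1, \frac{n}{2}-2, 2)$. I would start with an arbitrary TI tree $T$ of order $n$ and let $v \in V(T)$ have minimum transmission. By Lemma~\ref{min_deg_3_lemma}, $d_T(v) = k \ge 3$; write $T_1, \ldots, T_k$ for the components of $T - v$ and $a_i = |T_i|$. Lemma~\ref{compo} gives that the $a_i$ are pairwise distinct, and the same transmission argument as in Proposition~\ref{odd_th_case_1} (a component of size at least $n/2$ would contain a vertex of transmission smaller than $\Tr(v)$) yields $a_i < n/2$, i.e.\ $a_i \le \frac{n}{2} - 1$. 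Ordering $\frac{n}{2} - 1 \ge a_1 > a_2 > \cdots > a_k \ge 1$, distinctness then forces $a_2 \le \frac{n}{2} - 2$.

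Next I would run the two monotonicity reductions. First, by repeatedly invoking Lemma~\ref{arm_lemma} to replace every non-path component of $T - v$ by a path of equal order, one obtains $W(T) \le W(S(a_1, \ldots, a_k))$ with equality if and only if $T \cong S(a_1, \ldots, a_k)$. Second, I would repeatedly apply Corollary~\ref{majorization_lemma} to the pendent paths at $v$, each move transferring a leaf from a shorter arm to a longer one and strictly increasing the Wiener index. Using the bounds $a_1 \le \frac{n}{2} - 1$ and $a_2 \le \frac{n}{2} - 2$ together with $\sum_i a_i = n - 1$, these transfers drive the configuration up to $(\frac{n}{2} - 1, \frac{n}{2} - 2, 2)$, so that $W(S(a_1,\ldots,a_k)) \le W(S(\frac{n}{2}-1, \frac{n}{2}-2, 2))$, with equality exactly when $k = 3$ and $(a_1, a_2, a_3) = (\frac{n}{2} - 1, \frac{n}{2} - 2, 2)$. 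Chaining the two inequalities gives $W(T) \le W(S(\frac{n}{2}-1, \frac{n}{2}-2, 2))$, strict unless $T$ is this tree. As in the odd case, the intermediate trees produced during these reductions need not be TI; only the endpoint must be. Finally, the hypothesis $\{4n-15, 4n-7, 8n-23, 8n-15\} \cap \mathcal{PS} = \emptyset$ is precisely the condition under which Lemma~\ref{even lemma-1} certifies that $S(\frac{n}{2}-1, \frac{n}{2}-2, 2)$ is TI, which closes the argument and establishes uniqueness.

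The step I expect to require the most care is the majorization reduction, and this is exactly where the even case diverges from the odd one. Without distinctness, the cap $a_i \le \frac{n}{2} - 1$ alone would let the Wiener index climb to that of $S(\frac{n}{2}-1, \frac{n}{2}-1, 1)$, a tree with two equal arms whose strictly larger Wiener index would wreck the bound; crucially, however, that tree is not TI, since two components of $T - v$ of equal size violate Lemma~\ref{compo}. It is therefore essential that distinctness forces the second arm down to $\frac{n}{2} - 2$, leaving a residual arm of length $2$ rather than $1$. I would make explicit that $S(\frac{n}{2}-1, \frac{n}{2}-2, 2)$ is the majorization-maximal admissible configuration: its partial sums $\frac{n}{2}-1,\, n-3,\, n-1$ dominate those of any distinct partition with parts at most $\frac{n}{2}-1$ summing to $n-1$, so each leaf-transfer from the starting shadow to the target indeed increases $W$. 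This is the only genuinely new ingredient relative to Proposition~\ref{odd_th_case_1}.
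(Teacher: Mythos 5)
Your proposal is correct and follows essentially the same route as the paper: the paper's own proof is a one-line invocation of Lemmas~\ref{compo}, \ref{arm_lemma} and \ref{even lemma-1} that implicitly compresses exactly the argument of Proposition~\ref{odd_th_case_1} which you spell out (minimum-degree bound, distinct component sizes below $n/2$, reduction to a starlike tree, majorization to $\left(\frac{n}{2}-1,\frac{n}{2}-2,2\right)$, and the TI certificate). Your explicit remark that distinctness is what blocks the non-TI configuration $S\left(\frac{n}{2}-1,\frac{n}{2}-1,1\right)$ is a useful clarification the paper leaves unstated, but it is not a different method.
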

\begin{proof}
     Assume that $T$ is a TI tree of even order $n \geq 16$ with the maximum Wiener index with $\{ 4n - 15, 4n - 7, 8n - 23, 8n - 15 \} \cap \mathcal{PS} = \emptyset$. Let $v\in V(T)$ have the smallest transmission in $T$. By Lemmas \ref{compo}, \ref{arm_lemma} and \ref{even lemma-1}, we conclude that $T$ is a starlike tree of maximum degree $3$ with the shortest arm of length $2$, completing the proof of the result.
\end{proof}

The \texttt{SageMath} script given in Appendix \ref{sage_script} verifies that $C_{13}(7, 2; 9, 1)$ is indeed the unique TI tree of order $16$ that attains the maximum Wiener index, in accordance with case (ii) of Theorem~\ref{even_th}. Therefore, in the remainder of the section we can assume that $n \neq 16$. We proceed by investigating the TI property of the remaining trees appearing in Theorem \ref{even_th} as potential solutions.

\begin{lemma}\label{even_sol_2}
    Let $n \ge 14$ be even and suppose that $4n - 15\in \mathcal{PS}$. Also, let $k = \frac{\sqrt{4n - 15} - 1}{2}$. Then $C_{n - 3}\left(\frac{n}{2} - 1, 2; \frac{n}{2} + k - 2, 1\right)$ is TI if and only if $\{8k^2 + 17,8k^2 + 8k + 9,8k^2 + 16k + 9\}\cap \mathcal{PS}=\emptyset$.
\end{lemma}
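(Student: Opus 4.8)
The plan is to imitate the proof of Lemma~\ref{odd_3_ti_lemma}: pick the degree-$3$ branching vertex as a base point, propagate transmissions outward along every edge via Lemma~\ref{neighbors_lemma}, and then decide precisely which two vertices can share a transmission. First I would record the arithmetic consequence of the hypothesis: $k=\tfrac{\sqrt{4n-15}-1}{2}$ gives $(2k+1)^2=4n-15$, hence $n=k^2+k+4$, and $n\ge 14$ forces $k\ge 3$ (the smallest instance being $n=16$, $k=3$). Labelling the tree as in Figure~\ref{fig_1}, let $z_0$ be the degree-$3$ vertex $v_{n/2-1}$ and $y_{k-1}=v_{n/2+k-2}$ the second branching vertex, with the left pendent path $x_1\cdots x_{n/2-2}$, the length-two path $z_0z_1z_2$, the path $z_0y_1\cdots y_{k-1}$, the leaf $w$ at $y_{k-1}$, and the continuation $y_k\cdots y_{n/2-2}$. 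Setting $\Tr(z_0)=\beta$ and substituting $n=k^2+k+4$, repeated use of Lemma~\ref{neighbors_lemma} yields
\[
\Tr(z_1)=\beta+k^2+k,\quad \Tr(z_2)=\beta+2k^2+2k+2,\quad \Tr(w)=\beta+2k^2+2,
\]
together with $\Tr(x_i)=\beta+i^2+3i$ for $i\in[\tfrac n2-2]$, $\Tr(y_j)=\beta+j^2+j$ for $j\in[k-1]$, and $\Tr(y_j)=\beta+j^2+3j-2k+2$ for $j\in\{k,\ldots,\tfrac n2-2\}$.

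Because every listed perturbation is positive, $z_0$ is the unique minimiser, and each of the three families $\{x_i\}$, $\{y_j\}_{j\le k-1}$, $\{y_j\}_{j\ge k}$ is strictly increasing, with the first $y$-regime lying strictly below the second; one also checks the interlacing $\Tr(y_{k-1})<\Tr(z_1)<\Tr(y_k)$ and $\Tr(w)>\Tr(z_1)$. These order relations at once rule out all coincidences except those between a member of $\{z_1,z_2,w\}$ and a path vertex, and between the $x$- and $y$-families. The purely structural coincidences I would discard by completing the square: $x_i=y_j$ with $j\le k-1$ is impossible because $\{i^2+3i\}$ and $\{j^2+j\}$ are disjoint (the computation already made in Lemma~\ref{even lemma-1}); $x_i=y_j$ with $j\ge k$ reduces to $(j-i)(j+i+3)=2k-2$, impossible by a size estimate since $j>i$, $j\ge k$ and $i\ge 1$ force the left-hand side to be at least $2k+2$; and $\Tr(z_1)=\Tr(x_i)$ reduces to $(2i+3)^2=4k^2+4k+9$, which for $k\ge 3$ is squeezed strictly between $(2k+1)^2$ and $(2k+2)^2$. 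The values $\Tr(z_1),\Tr(z_2),\Tr(w)$ are mutually distinct by inspection.

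There remain exactly three genuine coincidences, each obtained by completing the square: $\Tr(w)=\Tr(x_i)$ is equivalent to $(2i+3)^2=8k^2+17$; $\Tr(w)=\Tr(y_j)$ (with $j\ge k$) to $(2j+3)^2=8k^2+8k+9$; and $\Tr(z_2)=\Tr(y_j)$ (with $j\ge k$) to $(2j+3)^2=8k^2+16k+9$. Thus each occurs if and only if the corresponding number is a perfect square, the remaining task being the routine verification that when it is, the resulting index $i$ or $j$ (of size roughly $\sqrt2\,k$) is a positive integer lying in its admissible range for $k\ge 3$. This produces precisely the three advertised conditions.

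The delicate point — and the main obstacle — is the a priori fourth coincidence $\Tr(z_2)=\Tr(x_i)$, which reduces to asking whether $8k^2+8k+17$ is a perfect square. Writing it as $8k^2+8k+17=2(2k+1)^2+15$ and putting $m=2i+3$, $p=2k+1$ turns this into the Pell-type equation $m^2-2p^2=15$. Reducing modulo $5$, the quadratic residues are $\{0,1,4\}$ while twice a residue lies in $\{0,2,3\}$, so $m^2\equiv 2p^2\pmod 5$ forces $5\mid m$ and $5\mid p$, whence $25\mid(m^2-2p^2)=15$, a contradiction. Hence $8k^2+8k+17$ is never a perfect square and this coincidence cannot occur, which is exactly why it does not appear as a fourth condition. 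Recognising that this potential obstruction must be excluded, and finding the clean mod-$5$ descent that excludes it, is the crux; the rest is bookkeeping to confirm that no pair among the seven transmission families has been overlooked.
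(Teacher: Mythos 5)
Your proposal is correct and follows essentially the same route as the paper: same labelling, same transmission formulas via Lemma \ref{neighbors_lemma}, the same reduction to the three square conditions $8k^2+17$, $8k^2+8k+9$, $8k^2+16k+9$, and the same identification of $\Tr(z_2)=\Tr(x_i)$ (equivalently $m^2-2p^2=15$) as the coincidence that must be excluded unconditionally. The only cosmetic difference is that you rule out that Pell-type equation by a mod-$5$ descent where the paper uses mod $3$; both are valid.
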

\begin{proof}
    Let $z_0$ and $y_{k - 1}$ be the branching vertices of $C_{n - 3}\left(\frac{n}{2} - 1, 2; \frac{n}{2} + k - 2, 1\right)$ so that:
    \begin{enumerate}[label=\textbf{(\arabic*)}]
        \item $z_0 y_1 y_2 \cdots y_{k - 1}$ is the path between $z_0$ and $y_{k - 1}$;
        \item $z_0 z_1 z_2$ is the pendent path of length two from $z_0$;
        \item $z_0 x_1 x_2 \cdots x_{\frac{n}{2} - 2}$ is the pendent path of length $\frac{n}{2} - 2$ from $z_0$;
        \item $y_{k - 1} w$ is the pendent path of length one from $y_{k - 1}$;
        \item $y_{k - 1} y_k y_{k + 1} \cdots y_{\frac{n}{2} - 2}$ is the pendent path of length $\frac{n}{2} - k - 1$ from $y_{k - 1}$;
    \end{enumerate}
    see Figure \ref{fig_3}. Moreover, let $\Tr(z_0) = \beta$ and note that $n = k^2 + k + 4$. By repeatedly using Lemma~\ref{neighbors_lemma}, we obtain $\Tr(z_1) = \beta + (k^2 + k)$, $\Tr(z_2) = \beta + (2k^2 + 2k + 2)$, $\Tr(x_i) = \beta + i(i + 3)$ for any $i \in \left[ \frac{n}{2} - 2 \right]$ and $\Tr(y_j) = \beta + j(j + 1)$ for any $j \in [k - 1]$, alongside $\Tr(w) = \beta + (2k^2 + 2)$ and $\Tr(y_j) = \beta + j(j + 3) - 2k + 2$ for any $j \in \{k, k + 1, \ldots, \frac{n}{2} - 2\}$. Since $\Tr(z_2) > \Tr(w) > \Tr(z_1)$, we have that $C_{n - 3}\left(\frac{n}{2} - 1, 2; \frac{n}{2} + k - 2, 1\right)$ is TI if and only if the following hold:
    \begin{enumerate}[label=\textbf{(\alph*)}]
        \item $\Tr(x_i)\neq \Tr(y_j)$ for any $i \in \left[ \frac{n}{2} - 2 \right]$ and $j \in \left[ \frac{n}{2} - 2 \right]$;
        \item none of the $\Tr(x_i)$ values are equal to $\Tr(z_1)$, $\Tr(z_2)$ or $\Tr(w)$, for any $i \in \left[ \frac{n}{2} - 2\right]$;
        \item none of the $\Tr(y_j)$ values are equal to $\Tr(z_1)$, $\Tr(z_2)$ or $\Tr(w)$, for any $j \in \left[ \frac{n}{2} - 2\right]$.
    \end{enumerate}

\begin{figure}[t]
\centering
\begin{tikzpicture}[scale=1.0]
\tikzstyle{vertex}=[draw,circle,minimum size=4pt,inner sep=1pt,fill=black]
\tikzstyle{edge}=[draw,thick]
\tikzstyle{dedge}=[draw,thick,dashed]

\node[vertex, label=below:$z_0$] (0) at (0, 0) {};
\node[vertex, label=left:$z_1$] (1) at (0, 1) {};
\node[vertex, label=right:$z_2$] (12) at (1, 1) {};

\node[vertex, label=below:$y_1$] (2) at (1, 0) {};
\node[vertex, label=below:$y_2$] (3) at (2, 0) {};
\node[vertex, label=below:$y_{k-2}$] (4) at (4, 0) {};
\node[vertex, label=below:$y_{k-1}$] (5) at (5, 0) {};
\node[vertex, label=right:$w$] (6) at (5, 1) {};
\node[vertex, label=below:$y_{k}$] (7) at (6, 0) {};
\node[vertex, label=below:$y_{\frac{n}{2} - 2}$] (8) at (8, 0) {};

\node[vertex, label=below:$x_1$] (9) at (-1, 0) {};
\node[vertex, label=below:$x_2$] (10) at (-2, 0) {};
\node[vertex, label=below:$x_{\frac{n}{2} - 2}$] (11) at (-4, 0) {};

\path[edge] (0) -- (1);
\path[edge] (0) -- (2);
\path[edge] (2) -- (3);
\path[dedge] (3) -- (4);
\path[edge] (4) -- (5);
\path[edge] (5) -- (6);
\path[edge] (5) -- (7);
\path[dedge] (7) -- (8);
\path[edge] (0) -- (9);
\path[edge] (9) -- (10);
\path[dedge] (10) -- (11);
\path[edge] (1) -- (12);
\end{tikzpicture}
\caption{The graph $C_{n - 3}\left(\frac{n}{2} - 1, 2; \frac{n}{2} + k - 2, 1\right)$ from Lemma \ref{even_sol_2}.}
\label{fig_3}
\end{figure}
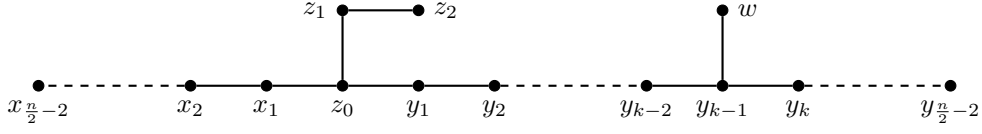

    If $\Tr(x_i) = \Tr(y_j)$ for some $i \in \left[ \frac{n}{2} - 2 \right]$ and $j \in [k - 1]$, it follows that $i(i + 3) = j(j + 1)$,  which implies that $j>i$ with $(j-i)(j+i+1)=2i$. Thus we get $2i\ge 2i+2$ as a clear contradiction.
     Now, suppose that $\Tr(x_i) = \Tr(y_j)$ is satisfied for some $i \in \left[ \frac{n}{2} - 2 \right]$ and $j \in \{k, k + 1, \ldots, \frac{n}{2} - 2\}$. In this case, we obtain $(j - i)(j + i + 3) = 2k - 2$. Since $i \ge 1$ and $j \ge k$, we have $j + i + 3 \ge k + 4$. If $j - i = 1$, then $(j - i)(j + i + 3) = 2j + 2 \ge 2k + 2$, while if $j - i \ge 2$, we get $(j - i)(j + i + 3) \ge 2(k + 4) = 2k + 8$. Either way, we reach a contradiction, which means that condition (a) always holds.

    If $\Tr(x_i) = \Tr(z_1)$ for some $i \in \left[ \frac{n}{2} - 2 \right]$, we get $i(i + 3) = k^2 + k$, which cannot hold for the same reason above. On the other hand, if $\Tr(x_i) = \Tr(z_2)$ for some $i \in \left[ \frac{n}{2} - 2 \right]$, it follows that $i(i + 3) = 2k^2 + 2k + 2 = 2n - 6$, thus implying $(2i + 3)^2 = 8n - 15$. However, $4n - 15$ and $8n - 15$ cannot be a perfect square at the same time. Indeed, if $4n - 15 = A^2$ and $8n - 15 = B^2$ hold for some $A, B \in \mathbb{Z}$, we get $B^2 - 2A^2 = 15$. Therefore, we have $A^2 + B^2 \equiv 0 \pmod{3}$, which is possible only if $A \equiv 0 \pmod{3}$ and $B \equiv 0 \pmod{3}$. This leads to a contradiction because $9 \nmid 15$. Now, suppose that $\Tr(x_i) = \Tr(w)$ for some $i \in \left[ \frac{n}{2} - 2 \right]$. In this case, we obtain $i(i + 3) = 2k^2 + 2$, which is equivalent to $(2i + 3)^2 = 8k^2 + 17$. Therefore, condition (b) is satisfied if and only if $8k^2 + 17\notin \mathcal{PS}$.

    We trivially observe that $\Tr(y_j) = \Tr(z_1)$ cannot be true for any $j \in \left[ \frac{n}{2} - 2\right]$, since $\Tr(y_{k - 1}) < \Tr(z_1) < \Tr(y_k)$. If we have $\Tr(y_j) = \Tr(z_2)$ for some $j \in \left[ \frac{n}{2} - 2\right]$, it follows that $j \ge k$, hence $j(j + 3) - 2k + 2 = 2k^2 + 2k + 2$, which is equivalent to $(2j + 3)^2 = 8k^2 + 16k + 9$. Similarly, if $\Tr(y_j) = \Tr(w)$ holds for some $j \in \left[ \frac{n}{2} - 2\right]$, we have $j \ge k$, which means that $j(j + 3) - 2k + 2 = 2k^2 + 2$, i.e., $(2j + 3)^2 = 8k^2 + 8k + 9$. With all of this in mind, we conclude that condition (c) holds if and only if $\{8k^2 + 8k + 9,8k^2 + 16k + 9\}\cap \mathcal{PS}=\emptyset$.
\end{proof}

The next lemma can be proved in an entirely analogous manner to Lemma \ref{even_sol_2}, so we omit its proof.

\begin{lemma}\label{even_sol_3}
    Let $n \ge 14$ be even and suppose that $4n - 15\in \mathcal{PS}$. Also, let $k = \frac{\sqrt{4n - 15} - 1}{2}$. Then $C_{n - 3}\left(\frac{n}{2} - 1, 2; \frac{n}{2} + k - 3, 1\right)$ is TI if and only if $\{ 8k^2 - 8k + 25, 8k^2 + 9, 8k^2 + 16k + 1 \} \cap \mathcal{PS} = \emptyset$.
\end{lemma}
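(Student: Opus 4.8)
The plan is to follow the exact template established in the proof of Lemma~\ref{even_sol_2}, since the tree $C_{n - 3}\left(\frac{n}{2} - 1, 2; \frac{n}{2} + k - 3, 1\right)$ differs from the tree in that lemma only in that its second branch is attached one vertex earlier along the central path. First I would set up the same labeling: let $z_0$ and $y_{k - 2}$ be the two branching vertices, with the length-two pendent path $z_0 z_1 z_2$ and the long pendent path $z_0 x_1 \cdots x_{\frac{n}{2} - 2}$ hanging off $z_0$, while the single leaf $w$ and the remaining path $y_{k - 2} y_{k - 1} \cdots y_{\frac{n}{2} - 2}$ hang off $y_{k - 2}$. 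Recording $n = k^2 + k + 4$, I would then apply Lemma~\ref{neighbors_lemma} repeatedly to compute all transmissions relative to $\beta = \Tr(z_0)$, obtaining the $\Tr(x_i)$, $\Tr(y_j)$, $\Tr(z_1)$, $\Tr(z_2)$, and $\Tr(w)$ in closed form. The shift of the branching vertex from $y_{k - 1}$ to $y_{k - 2}$ changes the additive constants in the $y_j$ and $w$ formulas, which is exactly what produces the different perfect-square conditions $8k^2 - 8k + 25$, $8k^2 + 9$, and $8k^2 + 16k + 1$ in place of the original three.

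The core of the argument is the same case analysis split into conditions (a), (b), (c). For condition (a), comparing $\Tr(x_i)$ with $\Tr(y_j)$ on the short segment $j \le k - 2$ reduces to a Pell-type nonsolvability of the form $i(i+3) = j(j+2)$ (or similar), which collapses via a factorization $(j - i)(\cdots) = \text{const}$ and a parity/size estimate; on the long segment $j \ge k - 1$ the same factorization trick with the bound $j \ge k - 1$, $i \ge 1$ forces the product to exceed the target constant. For conditions (b) and (c), the equalities $\Tr(x_i) = \Tr(z_1)$, $\Tr(y_j) = \Tr(z_1)$ are ruled out trivially (either by a consecutive-square obstruction or by the ordering $\Tr(y_{k-2}) < \Tr(z_1) < \Tr(y_{k-1})$), while the three genuinely substantive equalities—$\Tr(x_i) = \Tr(w)$, $\Tr(y_j) = \Tr(z_2)$, and $\Tr(y_j) = \Tr(w)$—each rearrange into one of the three stated perfect-square expressions after completing the square to the form $(2i + 3)^2 = 8k^2 + \cdots$ or $(2j + 3)^2 = 8k^2 + \cdots$. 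The equality $\Tr(x_i) = \Tr(z_2)$ must again be dispatched by the $B^2 - 2A^2 = 15$ argument showing $4n - 15$ and $8n - 15$ cannot be simultaneous perfect squares.

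The main obstacle I anticipate is purely bookkeeping: getting the transmission formulas exactly right after the one-step shift of the branching vertex, since the constants $-2k + 2$ (versus $-2k + 4$ in the odd case) and the precise value of $\Tr(w)$ depend delicately on how many path vertices lie on each side of $y_{k - 2}$. A single off-by-one error there would corrupt all three target expressions. Because the paper explicitly states this lemma is proved ``in an entirely analogous manner to Lemma~\ref{even_sol_2},'' I would verify the shifted formulas once carefully against the structure in a figure, confirm that the ordering $\Tr(z_2) > \Tr(w) > \Tr(z_1)$ still holds so that the reduction to conditions (a)--(c) is valid, and then simply transcribe the case analysis, noting at each substantive step which perfect-square condition arises—trusting that the quadratic-residue modulo $8$ arguments and the factorization estimates carry over verbatim.
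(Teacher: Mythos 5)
Your proposal follows exactly the paper's intended route: the paper omits this proof as ``entirely analogous'' to Lemma~\ref{even_sol_2}, and your plan --- recompute the shifted transmissions with $n = k^2 + k + 4$, keep the unconditional exclusions, and read off the three perfect-square conditions from $\Tr(x_i)=\Tr(w)$, $\Tr(y_j)=\Tr(w)$ and $\Tr(y_j)=\Tr(z_2)$ --- is correct and does yield $(2i+3)^2 = 8k^2-8k+25$, $(2j+3)^2 = 8k^2+9$ and $(2j+3)^2 = 8k^2+16k+1$, respectively. One bookkeeping point of precisely the kind you anticipated: with the branch moved to $y_{k-2}$ one gets $\Tr(w)=\beta+2k^2-2k+4$ and $\Tr(y_j)=\beta+j(j+3)-2k+4$ for $j\ge k-1$, so the sandwich excluding $\Tr(y_j)=\Tr(z_1)$ is $\Tr(y_{k-1})<\Tr(z_1)<\Tr(y_k)$ rather than the $\Tr(y_{k-2})<\Tr(z_1)<\Tr(y_{k-1})$ you wrote, while the ordering $\Tr(z_2)>\Tr(w)>\Tr(z_1)$ you rightly insisted on checking does still hold.
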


\begin{lemma}\label{even_sol_4}
    For any even $n \ge 14$ such that $\{ 4n - 15, 4n - 7 \} \cap \mathcal{PS} = \emptyset$ and $8n - 23 \in \mathcal{PS}$, $C_{n - 3}\left(\frac{n}{2} - 1, 2; \frac{n}{2} + \frac{\sqrt{8n - 23} - 1}{2} - 2, 1\right)$ is TI.
\end{lemma}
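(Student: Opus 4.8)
The plan is to reuse the template of the proof of Lemma~\ref{even_sol_2}, since the present tree has the very same shape, only with the second branching vertex pushed further along the spine. Set $m = \frac{\sqrt{8n - 23} - 1}{2}$, so that $2n = m^2 + m + 6$, and label the two branching vertices $z_0$ and $y_{m - 1}$ together with the pendent paths $z_0 z_1 z_2$, $z_0 x_1 x_2 \cdots x_{\frac{n}{2} - 2}$, $y_{m - 1} w$ and $y_{m - 1} y_m \cdots y_{\frac{n}{2} - 2}$, exactly as in Figure~\ref{fig_3} but with $k$ replaced by $m$. Writing $\Tr(z_0) = \beta$ and applying Lemma~\ref{neighbors_lemma} repeatedly yields the same closed forms as before: $\Tr(z_1) = \beta + (n - 4)$, $\Tr(z_2) = \beta + (2n - 6)$, $\Tr(w) = \beta + (m^2 - m + n - 2)$, $\Tr(x_i) = \beta + i(i + 3)$, together with $\Tr(y_j) = \beta + j(j + 1)$ for $j \in [m - 1]$ and $\Tr(y_j) = \beta + j(j + 3) - 2m + 2$ for $j \in \{m, \ldots, \frac{n}{2} - 2\}$.

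The first point to settle is the ordering, where the situation genuinely differs from Lemma~\ref{even_sol_2}. Using $2n = m^2 + m + 6$ one finds $\Tr(w) - \Tr(z_2) = \frac{(m - 1)(m - 2)}{2} > 0$, so here $\Tr(z_1) < \Tr(z_2) < \Tr(w)$; moreover $\Tr(y_{m - 1}) < \Tr(z_2) < \Tr(y_m)$, which already forces $\Tr(z_2)$ to avoid every $\Tr(y_j)$. As before the $\Tr(x_i)$ are strictly increasing, and the two blocks of $\Tr(y_j)$ are each strictly increasing with the small block lying below the large one, so it remains to exclude coincidences of the three types (a) $\Tr(x_i) = \Tr(y_j)$, (b) $\Tr(x_i) \in \{\Tr(z_1), \Tr(w)\}$, and (c) $\Tr(y_j) \in \{\Tr(z_1), \Tr(w)\}$.

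Type (a) is dispatched verbatim as in Lemma~\ref{even_sol_2}, through the factorizations $(j - i)(j + i + 1) = 2i$ against the small block and $(j - i)(j + i + 3) = 2m - 2$ against the large block, neither of which depends on the exact link between $n$ and $m$. For the coincidences with $z_1$, completing the square turns $\Tr(x_i) = \Tr(z_1)$ into $(2i + 3)^2 = 4n - 7$ and turns $\Tr(y_j) = \Tr(z_1)$ (for $j \le m - 1$) into $(2j + 1)^2 = 4n - 15$, both ruled out by the hypothesis $\{4n - 15, 4n - 7\} \cap \mathcal{PS} = \emptyset$; the large-$j$ subcase is handled by $\Tr(z_1) < \Tr(y_m)$, and the remaining coincidence $\Tr(x_i) = \Tr(z_2)$ becomes $(2i + 3)^2 = 8n - 15$, impossible since $8n - 23$ and $8n - 15$ cannot be perfect squares simultaneously.

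The crux --- and the reason this lemma is harder than Lemma~\ref{even_sol_2} --- is the pair of coincidences with $w$, which in Lemma~\ref{even_sol_2} survived as part of an ``if and only if'' but here must be excluded outright. Completing the square converts $\Tr(x_i) = \Tr(w)$ into $(2i + 3)^2 = 6m^2 - 2m + 13$ and $\Tr(y_j) = \Tr(w)$ (for $j \ge m$) into $(2j + 3)^2 = 6m^2 + 6m + 5$, while the small-$j$ subcase of the latter is killed by $\Tr(y_{m - 1}) < \Tr(w)$. I would treat both equations by multiplying through by $6$ and completing the square in $m$, reaching the Pell-type equations $p^2 - 6q^2 = -77$ with $p = 6m - 1$ and $p^2 - 6q^2 = -21$ with $p = 6m + 3$, where $q$ denotes the odd quantity being squared. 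Since $6 \equiv -1 \pmod 7$ and $-1$ is a quadratic non-residue modulo $7$, any solution would force $7 \mid p$ and $7 \mid q$, whence $49$ would divide the right-hand side; as $49 \nmid 77$ and $49 \nmid 21$, neither equation is solvable, so $\Tr(x_i) \neq \Tr(w)$ and $\Tr(y_j) \neq \Tr(w)$. This is the main obstacle, and once it is cleared all transmissions are distinct, proving that the tree is TI.
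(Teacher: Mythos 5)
Your proposal is correct and follows essentially the same route as the paper: identical labelling and transmission formulas, the same ordering observation $\Tr(z_1)<\Tr(z_2)<\Tr(w)$, and the same reduction of each potential coincidence to a Diophantine condition resolved by the hypotheses or by modular arithmetic. The only cosmetic deviation is in the two coincidences with $w$, where you recast $(2i+3)^2=6m^2-2m+13$ and $(2j+3)^2=6m^2+6m+5$ as the Pell-type equations $p^2-6q^2=-77$ and $p^2-6q^2=-21$ and finish both modulo $7$ and $49$, whereas the paper handles the first directly modulo $7$ (via $6k^2-2k+13\equiv-(k+1)^2$) and the second by the simpler observation that $6k^2+6k+5\equiv 2\pmod 3$; both finishes are valid.
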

\begin{proof}
\begin{figure}[t]
\centering
\begin{tikzpicture}[scale=1.0]
\tikzstyle{vertex}=[draw,circle,minimum size=4pt,inner sep=1pt,fill=black]
\tikzstyle{edge}=[draw,thick]
\tikzstyle{dedge}=[draw,thick,dashed]

\node[vertex, label=below:$z_0$] (0) at (0, 0) {};
\node[vertex, label=left:$z_1$] (1) at (0, 1) {};
\node[vertex, label=right:$z_2$] (12) at (1, 1) {};

\node[vertex, label=below:$y_1$] (2) at (1, 0) {};
\node[vertex, label=below:$y_2$] (3) at (2, 0) {};
\node[vertex, label=below:$y_{k-2}$] (4) at (4, 0) {};
\node[vertex, label=below:$y_{k-1}$] (5) at (5, 0) {};
\node[vertex, label=right:$w$] (6) at (5, 1) {};
\node[vertex, label=below:$y_{k}$] (7) at (6, 0) {};
\node[vertex, label=below:$y_{\frac{n}{2} - 2}$] (8) at (8, 0) {};

\node[vertex, label=below:$x_1$] (9) at (-1, 0) {};
\node[vertex, label=below:$x_2$] (10) at (-2, 0) {};
\node[vertex, label=below:$x_{\frac{n}{2} - 2}$] (11) at (-4, 0) {};

\path[edge] (0) -- (1);
\path[edge] (0) -- (2);
\path[edge] (2) -- (3);
\path[dedge] (3) -- (4);
\path[edge] (4) -- (5);
\path[edge] (5) -- (6);
\path[edge] (5) -- (7);
\path[dedge] (7) -- (8);
\path[edge] (0) -- (9);
\path[edge] (9) -- (10);
\path[dedge] (10) -- (11);
\path[edge] (1) -- (12);
\end{tikzpicture}
\caption{The graph $C_{n - 3}\left(\frac{n}{2} - 1, 2; \frac{n}{2} + \frac{\sqrt{8n - 23} - 1}{2} - 2, 1\right)$ from Lemma \ref{even_sol_4}.}
\label{fig_4}
\end{figure}
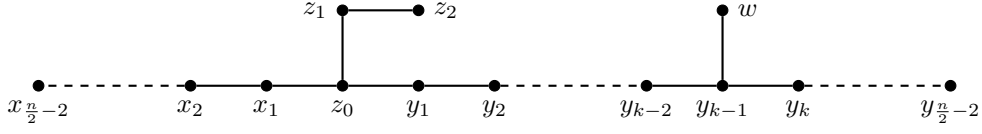

    Let $k = \frac{\sqrt{8n - 23} - 1}{2}$ and note that $n = \frac{k^2 + k + 6}{2}$. Let $z_0$ and $y_{k - 1}$ be the branching vertices of $C_{n - 3}\left(\frac{n}{2} - 1, 2; \frac{n}{2} + \frac{\sqrt{8n - 23} - 1}{2} - 2, 1\right)$ so that:
    \begin{enumerate}[label=\textbf{(\arabic*)}]
        \item $z_0 y_1 y_2 \cdots y_{k - 1}$ is the path between $z_0$ and $y_{k - 1}$;
        \item $z_0 z_1 z_2$ is the pendent path of length two from $z_0$;
        \item $z_0 x_1 x_2 \cdots x_{\frac{n}{2} - 2}$ is the pendent path of length $\frac{n}{2} - 2$ from $z_0$;
        \item $y_{k - 1} w$ is the pendent path of length one from $y_{k - 1}$;
        \item $y_{k - 1} y_k y_{k + 1} \cdots y_{\frac{n}{2} - 2}$ is the pendent path of length $\frac{n}{2} - k - 1$ from $y_{k - 1}$;
    \end{enumerate}
    see Figure \ref{fig_4}. Now, let $\Tr(z_0) = \beta$. By repeatedly using Lemma \ref{neighbors_lemma}, we have $\Tr(z_1) = \beta + \frac{k^2 + k - 2}{2}$, $\Tr(z_2) = \beta + (k^2 + k)$, $\Tr(x_i) = \beta + i(i + 3)$ for any $i \in \left[ \frac{n}{2} - 2 \right]$ and $\Tr(y_j) = \beta + j(j + 1)$ for any $j \in [k - 1]$, as well as $\Tr(w) = \beta + \frac{3k^2 - k + 2}{2}$ and $\Tr(y_j) = \beta + j(j + 3) - 2k + 2$ for any $j \in \{k, k + 1, \ldots, \frac{n}{2} - 2\}$. Since $\Tr(w) > \Tr(z_2) > \Tr(z_1)$, we conclude that $C_{n - 3}\left(\frac{n}{2} - 1, 2; \frac{n}{2} + k - 2, 1\right)$ is TI if and only if the following hold:
    \begin{enumerate}[label=\textbf{(\alph*)}]
        \item $\Tr(x_i)\neq \Tr(y_j)$ for any $i \in \left[ \frac{n}{2} - 2 \right]$ and $j \in \left[ \frac{n}{2} - 2 \right]$;
        \item none of the $\Tr(x_i)$ values are equal to $\Tr(z_1)$, $\Tr(z_2)$ or $\Tr(w)$, for any $i \in \left[ \frac{n}{2} - 2\right]$;
        \item none of the $\Tr(y_j)$ values are equal to $\Tr(z_1)$, $\Tr(z_2)$ or $\Tr(w)$, for any $j \in \left[ \frac{n}{2} - 2\right]$.
    \end{enumerate}
    We can show that condition (a) holds in the same way as in Lemma \ref{even_sol_2} and omit the proof here.

    Suppose that $\Tr(x_i) = \Tr(z_1)$ is true for some $i \in \left[ \frac{n}{2} - 2 \right]$. In this case, we get $i(i + 3) = \frac{k^2 + k - 2}{2} = n - 4$, i.e., $(2i + 3)^2 = 4n - 7$, which cannot be satisfied because $4n - 7$ is not a perfect square. Similarly, if we have $\Tr(x_i) = \Tr(z_2)$ for some $i \in \left[ \frac{n}{2} - 2 \right]$, we obtain $i(i + 3) = 2n - 6$, i.e., $(2i + 3)^2 = 8n - 15$, which leads to a contradiction because $8n - 23$ and $8n - 15$ cannot be a perfect square at the same time. Now, suppose that $\Tr(x_i) = \Tr(w)$ holds for some $i \in \left[ \frac{n}{2} - 2 \right]$. In this case, we obtain $i(i + 3) = \frac{3k^2 - k + 2}{2}$, which is equivalent to $(2i + 3)^2 = 6k^2 - 2k + 13$. Observe that $6k^2 - 2k + 13 \equiv -(k + 1)^2 \pmod{7}$. Since $0$ is the only quadratic residue modulo seven whose additive inverse is also a quadratic residue, we have $k = 7\ell - 1$ for some $\ell \in \mathbb{N}$. Therefore, $6k^2 - 2k + 13 = 49(6\ell^2 - 2\ell) + 21$, which yields a contradiction because $49 \nmid 21$. With all of this in mind, we conclude that condition (b) holds.

    Now, suppose that $\Tr(y_j) = \Tr(z_1)$ holds for some $j \in \left[ \frac{n}{2} - 2\right]$. Since $\Tr(y_k) > \Tr(z_1)$, we have $j \le k - 1$, hence $j(j + 1) = \frac{k^2 + k - 2}{2} = n - 4$, which is equivalent to $(2j + 1)^2 = 4n - 15$. Thus, we reach a contradiction because $4n - 15$ is not a perfect square. We trivially observe that $\Tr(y_j) = \Tr(z_2)$ is not satisfied for any $j \in \left[ \frac{n}{2} - 2\right]$, since $\Tr(y_k) > \Tr(z_2) > \Tr(y_{k - 1})$. If we suppose that $\Tr(y_j) = \Tr(w)$ for some $j \in \left[ \frac{n}{2} - 2\right]$, it follows that $j \ge k$, which gives $j(j + 3) - 2k + 2 = \frac{3k^2 - k + 2}{2}$, i.e., $(2j + 3)^2 = 6k^2 + 6k + 5$. This leads to a contradiction because $6k^2 + 6k + 5 \equiv 2 \pmod{3}$, while $2$ is not a quadratic residue modulo three. Therefore, condition (c) is satisfied.
\end{proof}

Based on Lemma \ref{branching lemma}, with some routine computations, we have the following results.
\begin{lemma}\label{even_wiener_lemma_1}
    For any even $n \ge 14$, we have
   \begin{enumerate}[label=\textbf{(\arabic*)}]
        \item $W\left(S\left(\tfrac{n}{2} - 1, \tfrac{n}{2} - 4, 4 \right)\right) = \frac{n^3-6n^2+59n-96}{6}$;
        \item $W\left(C_{n - 4} \left(\tfrac{n}{2} - 2, 3; \tfrac{n}{2} - 1, 1\right)\right) = \frac{n^3 - 6n^2 + 41n - 36}{6}$.
    \end{enumerate}
\end{lemma}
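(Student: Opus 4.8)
The plan is to apply Lemma~\ref{branching lemma} directly to each tree, so the work reduces to identifying the branching vertices and tallying the orders of the components produced by deleting them. Both trees have order $n$, so the leading term is $\binom{n+1}{3} = \frac{n^3 - n}{6}$ in each case, and it remains only to evaluate the correction sum of triple products and simplify.

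For part~(1), the starlike tree $S\left(\frac{n}{2}-1, \frac{n}{2}-4, 4\right)$ has a unique branching vertex, namely its center, of degree three; its removal yields three paths of orders $\frac{n}{2}-1$, $\frac{n}{2}-4$ and $4$. The correction sum therefore consists of the single product $4\left(\frac{n}{2}-1\right)\left(\frac{n}{2}-4\right)$, and I would subtract this from $\frac{n^3-n}{6}$ and collect terms to reach the claimed expression.

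For part~(2), the variant caterpillar $C_{n-4}\left(\frac{n}{2}-2, 3; \frac{n}{2}-1, 1\right)$ has two branching vertices, the adjacent path vertices $v_{\frac{n}{2}-2}$ and $v_{\frac{n}{2}-1}$, each of degree three. The one point demanding care is the correct allocation of the two attached pendent structures to components. Deleting $v_{\frac{n}{2}-2}$ splits the tree into the initial path segment of order $\frac{n}{2}-3$, the pendent path of order $3$ hanging from it, and a remaining piece of order $\frac{n}{2}-1$ that absorbs the leaf attached to $v_{\frac{n}{2}-1}$; deleting $v_{\frac{n}{2}-1}$ splits the tree into that leaf of order $1$, the terminal path segment of order $\frac{n}{2}-3$, and a remaining piece of order $\frac{n}{2}+1$ that absorbs the length-three path attached to $v_{\frac{n}{2}-2}$. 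Each branching vertex thus contributes one triple product, and I would sum the two contributions before subtracting from $\frac{n^3-n}{6}$.

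Carrying out the two subtractions and collecting terms yields $\frac{n^3 - 6n^2 + 59n - 96}{6}$ and $\frac{n^3 - 6n^2 + 41n - 36}{6}$ respectively, matching the stated formulas. The computations are elementary throughout; the sole obstacle is the bookkeeping of the component orders for the caterpillar, where one must remember that removing one branching vertex leaves the pendent structure hanging from the other branching vertex intact inside a single component. Once these orders are pinned down, the remaining algebra is routine.
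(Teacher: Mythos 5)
Your proposal is correct and follows exactly the route the paper indicates: both formulas are obtained by applying Lemma~\ref{branching lemma}, and your identification of the component orders at each branching vertex (including the careful bookkeeping for the two adjacent branching vertices of the caterpillar) matches the computation the paper leaves to the reader, with the corrections $4\left(\tfrac{n}{2}-1\right)\left(\tfrac{n}{2}-4\right)$ and $3\left(\tfrac{n}{2}-3\right)\left(\tfrac{n}{2}-1\right)+\left(\tfrac{n}{2}-3\right)\left(\tfrac{n}{2}+1\right)$ yielding precisely the stated expressions.
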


\begin{lemma}\label{even_wiener_lemma_2}
    For any even $n \ge 14$ such that $4n - 15\in \mathcal{PS}$, we have
    \begin{enumerate}[label=\textbf{(\arabic*)}]
        \item $W\left(C_{n - 3}\left(\tfrac{n}{2} - 1, 2; \tfrac{n}{2} + \tfrac{\sqrt{4n - 15} - 1}{2} - 2, 1\right)\right) = \frac{2n^3-9n^2+58n-102-6\sqrt{4n-15}}{12}$;
        \item $W\left(C_{n - 3}\left(\tfrac{n}{2} - 1, 2; \tfrac{n}{2} + \tfrac{\sqrt{4n - 15} - 1}{2} - 3, 1\right)\right) = \frac{2n^3-9n^2+58n-78-18\sqrt{4n-15}}{12}$;
        \item $W\left(C_{n - 4}\left(\tfrac{n}{2} - 1, 2; \tfrac{n}{2} + \tfrac{\sqrt{4n - 15} - 1}{2} - 2, 2\right)\right) = \frac{n^3-6n^2+47n-96}{6}$.
    \end{enumerate}
\end{lemma}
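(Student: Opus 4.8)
The plan is to evaluate all three Wiener indices directly through Lemma~\ref{branching lemma}, exploiting the fact that each of the three trees is a variant caterpillar with exactly two branching vertices, both of degree three. For such a tree $T$ of order $n$ the formula collapses to
\[
W(T) = \binom{n + 1}{3} - \ell_{1}\ell_{2}\ell_{3} - m_{1}m_{2}m_{3},
\]
where $\ell_1, \ell_2, \ell_3$ and $m_1, m_2, m_3$ are the orders of the three components of $T$ obtained by deleting the first and the second branching vertex, respectively. Thus the whole task reduces to reading off these six component orders from the structure of each tree and simplifying.

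First I would fix the abbreviation $k = \tfrac{\sqrt{4n - 15} - 1}{2}$ and record the identities $k^2 + k = n - 4$, $\ 2k + 1 = \sqrt{4n - 15}$ and $k^2 = n - \tfrac{7}{2} - \tfrac{1}{2}\sqrt{4n - 15}$, which are exactly what convert $k$-expressions back into $n$ and the surd $\sqrt{4n - 15}$ at the end. For the tree in item (1), namely $C_{n - 3}\left(\tfrac{n}{2} - 1, 2; \tfrac{n}{2} + k - 2, 1\right)$ with the labelling of Figure~\ref{fig_3}, deleting $z_0$ splits $T$ into components of orders $\tfrac{n}{2} - 2$, $2$, $\tfrac{n}{2} - 1$, while deleting $y_{k - 1}$ yields orders $1$, $\tfrac{n}{2} - k - 1$, $\tfrac{n}{2} + k - 1$; here the factorisation $(\tfrac{n}{2} - k - 1)(\tfrac{n}{2} + k - 1) = (\tfrac{n}{2} - 1)^2 - k^2$ is what produces the $\sqrt{4n - 15}$ term. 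For item (2) only the second branching vertex moves one step towards $z_0$, so the $z_0$-triple is unchanged and the second triple becomes $1$, $\tfrac{n}{2} - k$, $\tfrac{n}{2} + k - 2$; the surviving linear-in-$k$ contribution here is what raises the coefficient of $\sqrt{4n - 15}$ from $6$ to $18$. For item (3), the tree $C_{n - 4}\left(\tfrac{n}{2} - 1, 2; \tfrac{n}{2} + k - 2, 2\right)$ has both pendent paths of length two, so the two triples are $\tfrac{n}{2} - 2, 2, \tfrac{n}{2} - 1$ and $2, \tfrac{n}{2} - k - 2, \tfrac{n}{2} + k - 1$; in this case the $k$-dependence collapses entirely through $2k^2 + 2k = 2n - 8$, leaving a polynomial with no surd.

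With the six orders in hand, each Wiener index follows from a single substitution into the displayed formula and routine algebra, using $\binom{n + 1}{3} = \tfrac{n^3 - n}{6}$ and clearing to the common denominator $12$ for items (1) and (2) or $6$ for item (3). I expect no genuine obstacle: the only place demanding care is the bookkeeping of which component of $T - w_j$ carries the far end of the main path as opposed to the short pendent arm, since an off-by-one in locating a branching vertex on $P_{n - 3}$ or $P_{n - 4}$ would corrupt the constant and surd terms. A convenient consistency check throughout is that the three orders in each triple must sum to $n - 1$, and that the leading cubic $\tfrac{n^3}{6}$ must come solely from $\binom{n + 1}{3}$, since both triple-products are only quadratic in $n$; matching the remaining quadratic, linear and constant (and surd) coefficients against the claimed numerators then confirms the computation.
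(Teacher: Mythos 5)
Your proposal is correct and follows exactly the route the paper indicates (it states this lemma without proof, citing Lemma~\ref{branching lemma} and "routine computations"). All six component-order triples you list are right — each sums to $n-1$ as your consistency check demands — and substituting them together with $k^2+k=n-4$ and $2k+1=\sqrt{4n-15}$ reproduces all three claimed formulas.
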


\begin{lemma}\label{even_wiener_lemma_6}
    Let $n \ge 14$ be even and suppose that $4n - 15\in \mathcal{PS}$. Let $k = \frac{\sqrt{4n - 15} - 1}{2}$ and suppose that $8k^2 + 17\in \mathcal{PS}$. Let $\ell = \frac{\sqrt{8k^2 + 17} - 3}{2}$. Then
    \[
        W\left(C_{n - 4}\left(\tfrac{n}{2} - 2, 2; \tfrac{n}{2} + k - 3, 1; \tfrac{n}{2} - \ell - 1, 1\right)\right) = \frac{n^3 - 6n^2 + 47n - 90 - 18k - 6 \ell}{6}.
    \]
\end{lemma}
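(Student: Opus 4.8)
My plan is to apply Lemma~\ref{branching lemma} directly, since the tree in question has only a handful of branching vertices. The first step is to describe the tree precisely and locate these vertices. It is the variant caterpillar built on $P_{n-4} = v_1 v_2 \cdots v_{n-4}$ by attaching a pendent path of length $2$ at $v_{\frac{n}{2}-2}$ and pendent paths of length $1$ at $v_{\frac{n}{2}+k-3}$ and $v_{\frac{n}{2}-\ell-1}$. The internal vertex of the length-$2$ pendent path has degree $2$, so the only branching vertices are $w_1 = v_{\frac{n}{2}-\ell-1}$, $w_2 = v_{\frac{n}{2}-2}$ and $w_3 = v_{\frac{n}{2}+k-3}$, each of degree exactly $3$. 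Using $n = k^2 + k + 4$ (which is just $4n-15 = (2k+1)^2$) together with $(2\ell+3)^2 = 8k^2 + 17$, one checks $k > 1$ and $\ell > 1$, so these three vertices appear along the path in the order $w_1, w_2, w_3$.

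Since every $w_j$ has degree $3$, the inner sum in Lemma~\ref{branching lemma} collapses to the single product of the three component sizes of $T - w_j$, and the next step is to tabulate these sizes by tracking which pendent structures fall on each side of $w_j$. Removing $w_1$ yields components of sizes $\frac{n}{2}-\ell-2$, $1$ and $\frac{n}{2}+\ell$; removing $w_2$ yields $\frac{n}{2}-2$, $2$ and $\frac{n}{2}-1$; removing $w_3$ yields $\frac{n}{2}+k-1$, $1$ and $\frac{n}{2}-k-1$. In each case the three sizes sum to $n-1$, which I would use as a running sanity check. Lemma~\ref{branching lemma} then gives
\[
    W(T) = \binom{n+1}{3} - \left[ \left(\tfrac{n}{2}-\ell-2\right)\left(\tfrac{n}{2}+\ell\right) + 2\left(\tfrac{n}{2}-2\right)\left(\tfrac{n}{2}-1\right) + \left(\tfrac{n}{2}+k-1\right)\left(\tfrac{n}{2}-k-1\right) \right].
\]

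The final step is purely algebraic. A short expansion shows the bracket equals $n^2 - 5n + 5 - k^2 - \ell^2 - 2\ell$. To eliminate the quadratic terms I would invoke the two defining identities: from $(2\ell+3)^2 = 8k^2 + 17$ we obtain $\ell^2 + 3\ell = 2k^2 + 2$, which reduces the bracket to $n^2 - 5n + 3 - 3k^2 + \ell$, and then $k^2 = n - k - 4$ turns this into $n^2 - 8n + 15 + 3k + \ell$. Substituting $\binom{n+1}{3} = \frac{n^3-n}{6}$ and combining over the common denominator $6$ yields exactly $\frac{n^3 - 6n^2 + 47n - 90 - 18k - 6\ell}{6}$, as claimed.

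There is no conceptual obstacle here; the whole statement is a bookkeeping computation. The only place where genuine care is needed is the component-size accounting for $w_2$ and $w_3$, where one must correctly assign the leaf at $v_{\frac{n}{2}-\ell-1}$ and the length-$2$ pendent path to the proper side of each branching vertex, and this depends on the ordering $w_1, w_2, w_3$ established at the outset. Misplacing one of these structures is the most likely source of error, so the check that each triple of sizes sums to $n-1$ is the safeguard I would rely on; the subsequent simplification is mechanical once the identities $n = k^2 + k + 4$ and $\ell^2 + 3\ell = 2k^2 + 2$ are in hand.
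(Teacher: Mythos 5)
Your proposal is correct and follows exactly the route the paper intends: the paper omits the proof of this lemma, stating only that it follows from Lemma~\ref{branching lemma} by routine computation, and your component-size bookkeeping, the resulting bracket $n^2-5n+5-k^2-\ell^2-2\ell$, and the simplification via $\ell^2+3\ell=2k^2+2$ and $k^2=n-k-4$ all check out and yield the stated formula.
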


\begin{lemma}\label{even_wiener_lemma_4}
    Let $n \ge 14$ be even and suppose that $4n - 15\in \mathcal{PS}$. Let $k = \frac{\sqrt{4n - 15} - 1}{2}$ and suppose that $8k^2 + 8k + 9\in \mathcal{PS}$. Let $\ell = \frac{\sqrt{8k^2 + 8k + 9} - 3}{2}$. Then
    \[
        W\left(C_{n - 4}\left(\tfrac{n}{2} - 1, 2; \tfrac{n}{2} + k - 2, 1; \tfrac{n}{2} + \ell - 2, 1\right)\right) = \frac{n^3 - 6n^2 + 47n - 102 - 6k - 6\ell}{6}.
    \]
\end{lemma}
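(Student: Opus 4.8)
The plan is to apply Lemma~\ref{branching lemma} directly, exactly as in the (omitted) proofs of Lemmas~\ref{even_wiener_lemma_1}, \ref{even_wiener_lemma_2} and \ref{even_wiener_lemma_6}. First I would fix the structure of $T = C_{n-4}\bigl(\tfrac{n}{2}-1, 2; \tfrac{n}{2}+k-2, 1; \tfrac{n}{2}+\ell-2, 1\bigr)$: its base path is $P_{n-4} = v_1 v_2 \cdots v_{n-4}$, and it carries a pendent path of length $2$ at $v_{n/2-1}$ together with a single leaf at each of $v_{n/2+k-2}$ and $v_{n/2+\ell-2}$. Thus $T$ has exactly three branching vertices, all of degree $3$, sitting at these three positions. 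Before computing, I would record the two defining arithmetic identities: from $4n-15 = (2k+1)^2$ one gets $n = k^2+k+4$, and from $8k^2+8k+9 = (2\ell+3)^2$ one gets $\ell(\ell+3) = 2k(k+1)$. The latter also forces $\ell > k$, and since $n \ge 14$ gives $k \ge 3$, the three attachment positions are strictly increasing in the order listed, which fixes the left--right bookkeeping below.

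Since every branching vertex has degree $3$, the inner double sum in Lemma~\ref{branching lemma} collapses to the single product of the three component orders of $T - w_j$. The next step is to read off these products. Removing the degree-$3$ vertex at $v_{n/2-1}$ splits $T$ into the length-$2$ pendent path, the left stub of size $\tfrac{n}{2}-2$, and the remaining block of size $\tfrac{n}{2}-1$, giving the product $2\bigl(\tfrac{n}{2}-2\bigr)\bigl(\tfrac{n}{2}-1\bigr)$. Removing the vertex at $v_{n/2+k-2}$ (whose pendent path is a single leaf) yields $\bigl(\tfrac{n}{2}+k-1\bigr)\bigl(\tfrac{n}{2}-k-1\bigr)$, where the left block absorbs the length-$2$ pendent path. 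Finally, removing the vertex at $v_{n/2+\ell-2}$ gives $\bigl(\tfrac{n}{2}+\ell\bigr)\bigl(\tfrac{n}{2}-\ell-2\bigr)$, its left block now absorbing both the length-$2$ path and the first leaf. Getting each of these three splits right, and in particular tracking which pendent structures fall on the left side of each branching vertex, is the one place where a careless slip would corrupt the final answer; this bookkeeping is the main (though elementary) obstacle.

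It then remains to assemble $W(T) = \binom{n+1}{3} - S$, where $S$ is the sum of the three products above. Expanding gives $S = n^2 - 5n + 5 - k^2 - \ell^2 - 2\ell$, and substituting $\binom{n+1}{3} = \tfrac{n^3-n}{6}$ produces
\[
    W(T) = \frac{n^3 - 6n^2 + 29n - 30 + 6k^2 + 6\ell^2 + 12\ell}{6}.
\]
To finish, I would eliminate $k$ and $\ell$ from the quadratic terms using the two recorded identities: $k^2 = n - k - 4$ turns $6k^2$ into $6n - 6k - 24$, while $\ell^2 = 2k^2 + 2k - 3\ell$ together with $k^2 = n-k-4$ turns $6\ell^2 + 12\ell$ into $12n - 48 - 6\ell$. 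Adding these contributions collapses the quadratic part to $18n - 72 - 6k - 6\ell$, and the numerator becomes exactly $n^3 - 6n^2 + 47n - 102 - 6k - 6\ell$, as claimed. No genuine difficulty arises beyond this reduction; the whole argument is a direct, if careful, application of Lemma~\ref{branching lemma}.
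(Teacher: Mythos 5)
Your proposal is correct and follows exactly the route the paper intends: the paper omits this proof as a ``routine computation'' based on Lemma~\ref{branching lemma}, and your application of that lemma --- the three component-size products, the identities $n = k^2+k+4$ and $\ell(\ell+3) = 2k(k+1)$, and the final reduction of the numerator --- all check out. Nothing further is needed.
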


\begin{lemma}\label{even_wiener_lemma_5}
    Let $n \ge 14$ be even and suppose that $4n - 15\in \mathcal{PS}$. Let $k = \frac{\sqrt{4n - 15} - 1}{2}$ and suppose that $8k^2 + 16k + 9\in \mathcal{PS}$. Let $\ell = \frac{\sqrt{8k^2 + 16k + 9} - 3}{2}$. Then
    \[
        W\left(C_{n - 4}\left(\tfrac{n}{2} - 1, 2; \tfrac{n}{2} + k - 2, 1; \tfrac{n}{2} + \ell - 2, 1\right)\right) = \frac{n^3 - 6n^2 + 47n - 102 + 6k - 6\ell}{6}.
    \]
\end{lemma}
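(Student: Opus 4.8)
The plan is to apply Lemma~\ref{branching lemma} directly, since the tree $T = C_{n-4}\left(\tfrac{n}{2}-1, 2; \tfrac{n}{2}+k-2, 1; \tfrac{n}{2}+\ell-2, 1\right)$ has exactly three branching vertices, each of degree three. First I would record the two Diophantine constraints that the hypotheses impose: from $4n-15 = (2k+1)^2$ we get $n = k^2 + k + 4$, and from $8k^2 + 16k + 9 = (2\ell+3)^2$ we get the relation $\ell^2 + 3\ell = 2k^2 + 4k$, equivalently $\ell(\ell+3) = 2k(k+2)$. A short monotonicity argument shows $\ell > k$, and since $n \ge 14$ with $4n - 15 \in \mathcal{PS}$ forces $k \ge 3$, the three attachment points $v_{n/2-1}$, $v_{n/2+k-2}$, $v_{n/2+\ell-2}$ along the spine $P_{n-4}$ are distinct and occur in this left-to-right order.

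Next I would compute, for each branching vertex $w$, the sizes of the three components of $T - w$; because each $w$ has degree three, the inner double sum in Lemma~\ref{branching lemma} collapses to the single product of these three sizes. Removing $v_{n/2-1}$ splits $T$ into parts of sizes $\tfrac{n}{2}-2$, $2$, and $\tfrac{n}{2}-1$; removing $v_{n/2+k-2}$ gives sizes $1$, $\tfrac{n}{2}+k-1$, and $\tfrac{n}{2}-k-1$; and removing $v_{n/2+\ell-2}$ gives sizes $1$, $\tfrac{n}{2}+\ell$, and $\tfrac{n}{2}-\ell-2$. The corresponding products are $2\left(\tfrac{n}{2}-2\right)\left(\tfrac{n}{2}-1\right)$, $\left(\tfrac{n}{2}-1\right)^2 - k^2$, and $\left(\tfrac{n}{2}-1\right)^2 - (\ell+1)^2$. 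The only delicate bookkeeping here is tracking which pendent structures fall into the left versus right component at each vertex, which is exactly where the established order $w_1 < w_2 < w_3$ is used.

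Summing the three products and denoting the total by $S$, Lemma~\ref{branching lemma} gives $W(T) = \binom{n+1}{3} - S$. Writing $a = \tfrac{n}{2}$ and expanding, one finds $S = n^2 - 5n + 5 - k^2 - \ell^2 - 2\ell$. I would then eliminate $k^2$ and $\ell^2$ using the two constraints above: substituting $k^2 = n - k - 4$ and $\ell^2 = 2k^2 + 4k - 3\ell = 2n + 2k - 8 - 3\ell$ collapses $S$ to the clean form $S = n^2 - 8n + 17 - k + \ell$. Finally, $\binom{n+1}{3} = \tfrac{n^3 - n}{6}$ together with this value of $S$ yields $W(T) = \tfrac{n^3 - 6n^2 + 47n - 102 + 6k - 6\ell}{6}$, as claimed.

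The computation is entirely mechanical once the component sizes are correct, so I expect no genuine obstacle beyond careful accounting; the one place an error could creep in is the claim $\ell > k$ (needed to order the branching vertices), which I would justify by noting that $f(x) = x(x+3)$ is increasing on $[0,\infty)$ and $\ell(\ell+3) = 2k(k+2) > k(k+3)$ for $k \ge 1$.
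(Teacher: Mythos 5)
Your proposal is correct and follows exactly the route the paper indicates: the paper omits the proof, stating only that the lemma follows from Lemma~\ref{branching lemma} by routine computation, and your calculation supplies precisely those details (the component sizes, the substitutions $k^2 = n - k - 4$ and $\ell^2 = 2n + 2k - 8 - 3\ell$, and the ordering argument $\ell > k$ all check out).
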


\begin{lemma}\label{even_wiener_lemma_3}
    For any even $n \ge 14$ such that $8n - 23\in \mathcal{PS}$, we have
    \[
        W\left(C_{n - 3}\left(\tfrac{n}{2} - 1, 2; \tfrac{n}{2} + \tfrac{\sqrt{8n - 23} - 1}{2} - 2, 1\right)\right) = \frac{2n^3 - 9n^2 + 70n - 126 - 6 \sqrt{8n - 23}}{12} .
    \]
\end{lemma}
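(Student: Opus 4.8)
The plan is to apply Lemma~\ref{branching lemma}, which reduces the computation of $W(T)$ for a tree with few branching vertices to subtracting, from $\binom{n + 1}{3}$, a sum of products of component sizes taken three at a time around each branching vertex. First I would fix the structure of the tree $T = C_{n - 3}\left(\frac{n}{2} - 1, 2; \frac{n}{2} + \frac{\sqrt{8n - 23} - 1}{2} - 2, 1\right)$ exactly as in the proof of Lemma~\ref{even_sol_4} (see Figure~\ref{fig_4}), setting $k = \frac{\sqrt{8n - 23} - 1}{2}$, so that $T$ has precisely two branching vertices $z_0$ and $y_{k - 1}$, each of degree three, and recording the key relation $(2k + 1)^2 = 8n - 23$, equivalently $k^2 + k = 2n - 6$.

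Next I would determine the sizes of the three components of $T - z_0$ and of $T - y_{k - 1}$. For $z_0$, the pendent path $z_0 z_1 z_2$ contributes $2$ vertices, the pendent path $z_0 x_1 \cdots x_{\frac{n}{2} - 2}$ contributes $\frac{n}{2} - 2$ vertices, and the remaining component contributes $n - 1 - 2 - \left(\frac{n}{2} - 2\right) = \frac{n}{2} - 1$ vertices. For $y_{k - 1}$, the leaf $w$ contributes $1$ vertex, the pendent path $y_k \cdots y_{\frac{n}{2} - 2}$ contributes $\frac{n}{2} - k - 1$ vertices, and the remaining component contributes $\frac{n}{2} + k - 1$ vertices. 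Lemma~\ref{branching lemma} then gives
\[
    W(T) = \binom{n + 1}{3} - 2\left(\tfrac{n}{2} - 2\right)\left(\tfrac{n}{2} - 1\right) - \left(\tfrac{n}{2} - k - 1\right)\left(\tfrac{n}{2} + k - 1\right).
\]

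Finally I would carry out the algebra. The product around $y_{k - 1}$ equals $\left(\frac{n}{2} - 1\right)^2 - k^2$, and substituting $k^2 = 2n - 6 - k$ together with $k = \frac{\sqrt{8n - 23} - 1}{2}$ turns it into a rational expression in $n$ plus a $\sqrt{8n - 23}$ term; clearing denominators to $12$ and collecting terms then yields the stated value $\frac{2n^3 - 9n^2 + 70n - 126 - 6\sqrt{8n - 23}}{12}$. I expect no genuine obstacle here, since the argument is essentially bookkeeping: the only things to watch are the correct tally of component sizes around each branching vertex and the clean use of the identity $k^2 + k = 2n - 6$ to eliminate $k$ in favour of $\sqrt{8n - 23}$.
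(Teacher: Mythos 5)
Your proposal is correct and is exactly the route the paper intends: the paper omits the proof, stating only that the formula follows from Lemma~\ref{branching lemma} by routine computation, and your component counts ($2$, $\tfrac{n}{2}-2$, $\tfrac{n}{2}-1$ at $z_0$ and $1$, $\tfrac{n}{2}-k-1$, $\tfrac{n}{2}+k-1$ at $y_{k-1}$) together with the identity $k^2+k=2n-6$ do yield $\frac{2n^3 - 9n^2 + 70n - 126 - 6\sqrt{8n-23}}{12}$ after clearing denominators. No gaps.
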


We will also need the following technical result.

\begin{lemma}\label{square_technical}
    For any even $n \ge 34$ such that $4n - 15\in\mathcal{PS}$, at least one of the trees $C_{n - 3}\left(\tfrac{n}{2} - 1, 2; \tfrac{n}{2} + \tfrac{\sqrt{4n - 15} - 1}{2} - 2, 1\right)$ and $C_{n - 3}\left(\tfrac{n}{2} - 1, 2; \tfrac{n}{2} + \tfrac{\sqrt{4n - 15} - 1}{2} - 3, 1\right)$ is TI.
\end{lemma}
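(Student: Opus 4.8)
The plan is to turn the statement into an elementary Diophantine dichotomy using the exact criteria already established. Set $k = \frac{\sqrt{4n - 15} - 1}{2}$, so that $n = k^2 + k + 4$ and the hypothesis $n \ge 34$ is equivalent to $k \ge 5$. By Lemma~\ref{even_sol_2}, the first tree fails to be TI precisely when one of $P_1 = 8k^2 + 17$, $P_2 = 8k^2 + 8k + 9$, $P_3 = 8k^2 + 16k + 9$ is a perfect square, and by Lemma~\ref{even_sol_3} the second tree fails precisely when one of $Q_1 = 8k^2 - 8k + 25$, $Q_2 = 8k^2 + 9$, $Q_3 = 8k^2 + 16k + 1$ is a perfect square. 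Thus it suffices to prove that for $k \ge 5$ no $P_i$ and no $Q_j$ are simultaneously perfect squares, which reduces to checking the nine pairs $(P_i, Q_j)$.

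The unifying device is as follows. Each of the six expressions is congruent to $1$ modulo $8$, so whenever $P_i = a^2$ and $Q_j = b^2$ the integers $a$ and $b$ are odd and $a \pm b$ are both even. For $k \ge 2$ we have $P_i \neq Q_j$, so one of $a, b$ exceeds the other, and the difference of the two expressions factors as $(a - b)(a + b)$ with $|a - b|$ a positive even integer. I would then sandwich $|a - b| = |P_i - Q_j|/(a + b)$ between two consecutive even integers. Since every expression equals $8k^2 + O(k)$, both $a$ and $b$ lie within $O(1)$ of $2\sqrt{2}\,k$, whence $a + b \approx 4\sqrt{2}\,k$ and $|a - b|$ approaches $c/(4\sqrt{2})$, where $c \in \{8, 16, 24\}$ is the leading coefficient of $|P_i - Q_j|$. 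These limiting values are $\sqrt{2}$, $2\sqrt{2}$ and $3\sqrt{2}$, each lying strictly between two consecutive even integers; making the estimates on $a + b$ explicit therefore contradicts the integrality and parity of $a - b$. The two pairs with constant difference, $\{P_1, Q_2\}$ and $\{P_3, Q_3\}$, are handled directly: $(a - b)(a + b) = 8$ with $a, b$ odd forces $a = 3$, $b = 1$, which is incompatible with $Q_2, Q_3 > 1$ for $k \ge 1$.

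The crux is to make the bounds on $a + b$ quantitative, and this is exactly where $k \ge 5$ enters. The decisive pair is $\{P_2, Q_1\}$, for which $P_2 - Q_1 = 16(k - 1)$ and the lower estimate $a - b > 2$ amounts to $a + b < 8(k - 1)$; a short computation shows this holds for $k \ge 5$ but fails at $k = 4$, where in fact $P_2 = 169$ and $Q_1 = 121$ are both squares. This is precisely the excluded order $n = 24$, treated separately in Proposition~\ref{even_secondary}, which confirms that the threshold is sharp. For the pairs whose leading coefficient is $16$ or $24$ the explicit sandwich is valid only once $k$ passes a small bound, so I would clear the finitely many remaining values of $k$ by direct inspection, exploiting that $P_3 = 8(k + 1)^2 + 1$ is itself a Pell expression and hence a perfect square only for the sparse set $k \in \{0, 5, 34, \dots\}$ (with analogous remarks for the other expressions), as can also be confirmed by the computational verification already used in the paper.
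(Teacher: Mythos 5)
Your proposal is correct and follows essentially the same route as the paper's proof: both reduce to the nine pairs via Lemmas \ref{even_sol_2} and \ref{even_sol_3}, write the difference of the two hypothetical squares as $(A-B)(A+B)$ with $A-B$ a nonzero even integer, and use $A,B\approx 2\sqrt{2}\,k$ to force $|A-B|$ into an impossible range. The paper organizes the endgame uniformly (bounding $|A-B|\le 4$ and enumerating the candidate values of $A$ as linear forms in $k$, then substituting back), whereas you sandwich $|A-B|$ pair by pair and patch the one recalcitrant pair $\bigl(8k^2+16k+9,\;8k^2-8k+25\bigr)$ in the range $5\le k\le 15$ via the Pell structure of $8(k+1)^2+1$; both endgames close, and your observation that $k=4$ (i.e.\ $n=24$) is exactly where the argument must fail matches the paper's exclusion of that order.
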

\begin{proof}
    By way of contradiction, suppose that neither $C_{n - 3}\left(\tfrac{n}{2} - 1, 2; \tfrac{n}{2} + \tfrac{\sqrt{4n - 15} - 1}{2} - 2, 1\right)$ nor $C_{n - 3}\left(\tfrac{n}{2} - 1, 2; \tfrac{n}{2} + \tfrac{\sqrt{4n - 15} - 1}{2} - 3, 1\right)$ is TI. Also, let $k = \frac{\sqrt{4n - 15} - 1}{2}$ and note that $k \ge 5$. By Lemmas \ref{even_sol_2} and \ref{even_sol_3}, there exist $A, B \in \mathbb{N}$ such that $A^2 \in \{ 8k^2 + 17, 8k^2 + 8k + 9, 8k^2 + 16k + 9 \}$ and $B^2 \in \{ 8k^2 - 8k + 25, 8k^2 + 9, 8k^2 + 16k + 1 \}$. Observe that $A$ and $B$ are both odd and that
    \begin{equation}\label{even_aux_1}
        (A - B)(A + B) \in \{ 8, 8k, 8k - 8, -8k + 8, 16k, 16k - 16, -16k + 16, 24k - 16 \}.
    \end{equation}
    Therefore, $|(A-B)(A+B)| \le 24k - 16$.

    Since $A > 2k \sqrt{2}$ and $B > 2k\sqrt{2} - 2$, we have $A + B > 4k \sqrt{2} - 2$, which means that
    \[
        |A - B| < \frac{24k - 6}{4k \sqrt{2} - 2} = 3\sqrt{2} + \frac{6 \sqrt{2} - 6}{4k \sqrt{2} - 2} \le 3 \sqrt{2} + \frac{6 \sqrt{2} - 6}{12 \sqrt{2} - 2} < 5 .
    \]
    From here, we obtain $A - B \in \{ 2, -2, 4, -4 \}$, hence
    \[
        (A+ B)(A - B) \in \{ 4A - 4, -4A - 4, 8A - 16, -8A - 16\}.
    \]
    With this in mind, \eqref{even_aux_1} leads to
    \begin{align*}
        A \in \{ 3, k - 3, k + 1, k + 2, 2&k - 4, 2k - 3, 2k - 1,\\
        2&k, 2k + 1, 2k + 2, 3k, 4k - 5, 4k - 3, 4k + 1, 6k - 3\} .
    \end{align*}
    Since $2k \sqrt{2} < A < 2k \sqrt{2} + 4$, we conclude that $A \in \{ 3k, 4k - 5, 4k - 3\}$.

    If $A = 3k$, then we have $k^2 \in \{ 17, 8k + 9, 16k + 9 \}$, which is impossible unless $k = 9$. However, in this case, $\{ 8k^2 - 8k + 25, 8k^2 + 9, 8k^2 + 16k + 1 \} \cap \mathcal{PS} = \emptyset$. On the other hand, if $A = 4k - 5$, then we get $8k^2 \in \{40k - 8, 48k - 16, 56k - 16\}$, which cannot be satisfied. Finally, if $A = 4k - 3$, then we obtain $8k^2 = \{ 24k + 8, 32k, 40k \}$, which is impossible unless $k = 5$. However, $\{ 8k^2 - 8k + 25, 8k^2 + 9, 8k^2 + 16k + 1 \} \cap \mathcal{PS} = \emptyset$ when $k = 5$.
\end{proof}

We proceed with the next auxiliary lemma.

\begin{lemma}\label{even_arm_lemma}
    Let $n \ge 14$ be even and such that $n \notin \{16, 24\}$, and suppose that one of the following holds:
    \begin{enumerate}[label=\textbf{(\arabic*)}]
        \item $4n - 15\in \mathcal{PS}$;
        \item  $\{4n - 15,4n - 7\}\cap \mathcal{PS}=\emptyset$ and $8n - 23\in \mathcal{PS}$.
    \end{enumerate}
    Also, let $T$ be a TI tree of order $n$ that attains the maximum Wiener index, with $v \in V(T)$ having the minimum transmission. Then $d_T(v) = 3$ and there is a pendent path of length two attached to $v$, while the other two components of $T - v$ have $\frac{n}{2} - 1$ and $\frac{n}{2} - 2$ vertices, respectively.
\end{lemma}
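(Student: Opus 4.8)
The plan is to follow the template of the odd-order characterisation in Lemma~\ref{odd_char_lemma}, with the essential new twist that one delicate configuration cannot be settled by a Wiener-index comparison alone and instead requires the transmission structure directly. First I would apply Lemma~\ref{min_deg_3_lemma} to get $d_T(v) = k \ge 3$, write $T - v = \bigcup_{i=1}^{k} T_i$ with $a_i = |T_i|$, and use Lemma~\ref{compo} together with the minimality of $\Tr_T(v)$ to order the components as $\frac{n}{2} - 1 \ge a_1 > a_2 > \cdots > a_k \ge 1$, so that $a_1 \le \frac{n}{2} - 1$ and $a_2 \le \frac{n}{2} - 2$. The whole statement is then equivalent to $\sum_{i=3}^{k} a_i = 2$: when $k = 3$ the bound $a_1 + a_2 \le (\frac{n}{2} - 1) + (\frac{n}{2} - 2) = n - 3$ forces $a_3 \ge 2$, so $\sum_{i \ge 3} a_i = 2$ occurs exactly when $k = 3$, $a_3 = 2$, and hence $a_1 = \frac{n}{2} - 1$, $a_2 = \frac{n}{2} - 2$ (the assertion), whereas $k \ge 4$ gives $\sum_{i \ge 3} a_i \ge a_3 + a_4 \ge 3$. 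I would therefore assume $\sum_{i \ge 3} a_i \ge 3$ and derive a contradiction with the maximality of $W(T)$.

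The core is an upper bound on $W(T)$ in this case. By Lemma~\ref{arm_lemma} the components may be straightened into pendent paths without decreasing the Wiener index, so $W(T) \le W(S(a_1, \dots, a_k))$, which Lemma~\ref{branching lemma} evaluates as $\binom{n + 1}{3}$ minus the sum of the arm triple-products. Minimising that triple-product sum under the size constraints alone points to shapes such as $S(\frac{n}{2} - 1, \frac{n}{2} - 3, 3)$, whose Wiener index unfortunately exceeds that of the intended extremal tree, so the size-only bound is inconclusive. The way out is to invoke transmission irregularity through Lemma~\ref{neighbors_lemma}: a pendent path of length $\frac{n}{2} - 1$ issuing from $v$ realises the transmissions $\Tr_T(v) + j(j + 1)$, and these collide with the transmissions along any other pendent path from $v$ whose length is smaller by at least two, and also --- precisely under hypotheses (1) and (2) --- with those along a pendent path of length one or two, which is where $4n - 15$, $4n - 7$ and $8n - 23$ enter. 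Consequently a TI tree with $\sum_{i \ge 3} a_i \ge 3$ cannot carry a full-length path arm next to a near-maximal arm, and the only near-extremal shapes that survive are the two trees $S(\frac{n}{2} - 1, \frac{n}{2} - 4, 4)$ and $C_{n - 4}(\frac{n}{2} - 2, 3; \frac{n}{2} - 1, 1)$ of Lemma~\ref{even_wiener_lemma_1}; I would show that $W(T)$ is at most the larger of their two Wiener indices.

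To finish, I would supply a TI competitor and compare. When $4n - 15 \in \mathcal{PS}$, Lemma~\ref{square_technical} with Lemmas~\ref{even_sol_2} and \ref{even_sol_3} guarantees that a case~(ii) or case~(iii) tree is TI, and when $\{4n - 15, 4n - 7\} \cap \mathcal{PS} = \emptyset$ and $8n - 23 \in \mathcal{PS}$, Lemma~\ref{even_sol_4} supplies one; since $T$ is extremal, $W(T)$ is at least the relevant value from Lemma~\ref{even_wiener_lemma_2} or Lemma~\ref{even_wiener_lemma_3}. Feeding in the two explicit values of Lemma~\ref{even_wiener_lemma_1}, it then remains to verify the cubic-versus-square-root inequalities showing that both $W(S(\frac{n}{2} - 1, \frac{n}{2} - 4, 4))$ and $W(C_{n - 4}(\frac{n}{2} - 2, 3; \frac{n}{2} - 1, 1))$ stay strictly below the competitor's Wiener index for every $n$ covered by the lemma, the borderline small orders $n \in \{16, 24\}$ being exactly the ones excluded from the statement. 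This contradicts $W(T) \ge W(\text{competitor})$ and forces $\sum_{i \ge 3} a_i = 2$. The main obstacle is the systematic collision bookkeeping of the previous paragraph --- in particular the borderline case $(a_1, a_2) = (\frac{n}{2} - 1, \frac{n}{2} - 3)$ with tail $3$, where the Wiener estimate alone fails and one must use the explicit transmissions of Lemma~\ref{neighbors_lemma} to discard the non-TI starlike $S(\frac{n}{2} - 1, \frac{n}{2} - 3, 3)$ in favour of the strictly smaller caterpillar.
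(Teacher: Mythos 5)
Your proposal follows essentially the same route as the paper's proof: the same reduction to showing $\sum_{i\ge 3}a_i=2$, the same split into $\sum_{i\ge 3}a_i\ge 4$ (bounded via $S(\tfrac{n}{2}-1,\tfrac{n}{2}-4,4)$) versus $\sum_{i\ge 3}a_i=3$ (where the transmission collision between the second vertex of the $(\tfrac{n}{2}-1)$-component and the first vertex of the $(\tfrac{n}{2}-3)$-component forces a branching vertex adjacent to $v$, leading to the bound $C_{n-4}(\tfrac{n}{2}-2,3;\tfrac{n}{2}-1,1)$), and the same comparison against the TI competitors supplied by Lemmas \ref{square_technical}, \ref{even_sol_2}, \ref{even_sol_3} and \ref{even_sol_4} via the Wiener values in Lemmas \ref{even_wiener_lemma_1}--\ref{even_wiener_lemma_3}. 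The plan is correct, including the key observation that the size-only majorization bound $S(\tfrac{n}{2}-1,\tfrac{n}{2}-3,3)$ is too weak and must be discarded by the TI argument.
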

\begin{proof}
    By Lemma \ref{min_deg_3_lemma}, we have $d_T(v) \ge 3$. Let $k = d_T(v)$ and let $T_1, T_2, \ldots, T_k$ be the components of $T - v$. Also, for each $i \in [k]$, let $a_i = |T_i|$. Due to Lemma \ref{compo}, we may assume without loss of generality that $\frac{n}{2} - 1 \ge a_1 > a_2 > \cdots > a_k \ge 1$.

    Note that $\sum_{i = 3}^k a_i = 1$ cannot hold, since $v$ has the smallest transmission in $T$. We aim to prove $\sum_{i = 3}^k a_i = 2$.   By way of contradiction, we suppose that $\sum_{i = 3}^k a_i \ge 3$. If $\sum_{i = 3}^k a_i \ge 4$, then the repeated use of Lemma \ref{arm_lemma} and Corollary \ref{majorization_lemma} gives $W(T) \le W\left(S\left(\frac{n}{2} - 1, \frac{n}{2} - 3, 4\right)\right)$. On the other hand, if $\sum_{i = 3}^k a_i = 3$, then we have $a_1 = \frac{n}{2} - 1$ and $a_2 = \frac{n}{2} - 3$. Let $u$ be the neighbor of $v$ from $T_1$. Observe that $d_T(u) \ge 3$, since otherwise, the neighbor of $u$ distinct from $v$ will have the same transmission as the neighbor of $v$ from $T_2$. Thus, using repeatedly Lemma \ref{arm_lemma} and Corollary~\ref{majorization_lemma}, we have $W(T) \le W\left(C_{n - 4} \left(\tfrac{n}{2} - 2, 3; \tfrac{n}{2} - 1, 1\right)\right)$. Combining the above argument with Lemma \ref{even_wiener_lemma_1}, we obtain
    \begin{equation}\label{even_aux_2}
        W(T) \le \frac{n^3 - 6n^2 + 59n - 96}{6}.
    \end{equation}
    To finalize the proof, it suffices to reach a contradiction. We now divide into the following cases.

    \bigskip\noindent
    \textbf{Case 1.} $4n - 15\in \mathcal{PS}$.

 Since $n \notin \{16, 24\}$, observe that $n \ge 34$. Therefore, Lemma \ref{square_technical} implies that at least one of the trees $C_{n - 3}\left(\tfrac{n}{2} - 1, 2; \tfrac{n}{2} + \tfrac{\sqrt{4n - 15} - 1}{2} - 2, 1\right)$ and $C_{n - 3}\left(\tfrac{n}{2} - 1, 2; \tfrac{n}{2} + \tfrac{\sqrt{4n - 15} - 1}{2} - 3, 1\right)$ is TI. Note that $T$ is a TI tree with maximum Wiener index. By Lemma \ref{even_wiener_lemma_2}, we have
    \begin{equation}\label{even_aux_3}
        W(T) \ge \frac{2n^3-9n^2+58n-78-18\sqrt{4n-15}}{12} .
    \end{equation}
    By combining \eqref{even_aux_2} and \eqref{even_aux_3}, we reach
    \[
        3n^2 - 60n + 114 - 18 \sqrt{4n - 15} \le 0, \qquad \mbox{i.e.,} \qquad n^2 - 20n + 38 - 6 \sqrt{4n - 15} \le 0 ,
    \]
    which clearly cannot hold when $n \ge 34$.

    \bigskip\noindent
    \textbf{Case 2.} $\{4n - 15,4n - 7\}\cap \mathcal{PS}=\emptyset$ and $8n - 23\in \mathcal{PS}$.

    In this case, we have $n \ge 18$. Lemma \ref{even_sol_4} implies that $C_{n - 3}\left(\frac{n}{2} - 1, 2; \frac{n}{2} + \frac{\sqrt{8n - 23} - 1}{2} - 2, 1\right)$ is TI. From Lemma~\ref{even_wiener_lemma_3} and the fact that $T$ is a TI tree with maximum Wiener index, we obtain
    \begin{equation}\label{even_aux_4}
        W(T) \ge \frac{2n^3 - 9n^2 + 70n - 126 - 6 \sqrt{8n - 23}}{12} .
    \end{equation}
    By combining \eqref{even_aux_2} and \eqref{even_aux_4}, we get
    \[
        3n^2 - 48n + 66 - 6 \sqrt{8n - 23} \le 0, \qquad \mbox{i.e.,} \qquad n^2 - 16n + 22 - 2\sqrt{8n - 23} \le 0,
    \]
    which obviously cannot hold when $n \ge 18$.
\end{proof}

Observe that if $n \ge 14$ and $4n - 15 \in \mathcal{PS}$, then we have $n \in \{16, 24\}$ or $n \ge 34$. With this in mind, we settle cases (ii) and (iii) of Theorem \ref{even_th} as follows.

\begin{proposition}\label{even_th_case_2}
Let $n \ge 34$ be even with $4n - 15\in \mathcal{PS}$. Suppose that $k = \frac{\sqrt{4n - 15} - 1}{2}$ with $\{8k^2 + 17, 8k^2 + 8k + 9, 8k^2 + 16k + 9\}\cap\mathcal{PS}=\emptyset$. Then $C_{n - 3}\left(\tfrac{n}{2} - 1, 2; \frac{n}{2} + \tfrac{\sqrt{4n - 15} - 1}{2} - 2, 1\right)$ is the unique TI tree of order $n$ that attains the maximum Wiener index.
\end{proposition}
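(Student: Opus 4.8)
The plan is to follow the proof of Lemma~\ref{odd_type_a_1} verbatim in spirit, now adapted to the ``type B'' structure forced here. First I would invoke Lemma~\ref{even_arm_lemma}, which applies since $4n-15\in\mathcal{PS}$ and $n\ge 34$ (so $n\notin\{16,24\}$): any TI tree $T$ of order $n$ attaining the maximum Wiener index has a vertex $v$ of minimum transmission with $d_T(v)=3$, a pendent path $vu_1u_2$ of length two, and the two remaining components $T_1,T_2$ of $T-v$ of sizes $\frac{n}{2}-1$ and $\frac{n}{2}-2$, respectively. Writing $\beta=\Tr(v)$ and $k=\frac{\sqrt{4n-15}-1}{2}$, observe that $n=k^2+k+4$, and by Lemma~\ref{neighbors_lemma} one has $\Tr(u_1)=\beta+(n-4)=\beta+(k^2+k)$, independent of the internal shape of $T_1$ and $T_2$.

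The key structural step is to locate a branching vertex on the large component $T_1$. If the first $k$ vertices of $T_1$ along the path leaving $v$ formed a clean path, then repeated use of Lemma~\ref{neighbors_lemma} would assign the vertex at distance $k$ the transmission $\beta+\sum_{j=1}^{k}2j=\beta+k(k+1)=\beta+(n-4)=\Tr(u_1)$, contradicting the TI property of $T$. Hence the branching vertex $w$ of $T_1$ closest to $v$ satisfies $t:=d_T(w,v)\le k-1$; in particular $T_1$ is not a path. On $T_2$ no analogous clash is forced, since $4n-7\notin\mathcal{PS}$ and $8n-15\notin\mathcal{PS}$ by the impossibility arguments already used in Lemma~\ref{even_sol_2}, so $T_2$ is allowed to be a path.

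Next I would run the same two-stage reduction. Applying Lemma~\ref{arm_lemma} to $T_2$ and to each component of $T-w$ not containing $v$ yields a double-starlike tree $T'$ with $W(T)\le W(T')$, equality iff $T\cong T'$, in which $v$ has degree three with pendent paths of lengths $2$ and $\frac{n}{2}-2$, and $w$ sits at distance $t\le k-1$ carrying several pendent paths away from $v$. Since $w$ is a branching vertex it has at least two such pendent paths, and repeated application of Corollary~\ref{majorization_lemma} pushes them to the extreme admissible configuration, namely one long path together with a single leaf; this maximizes the Wiener index subject to $w$ remaining a branching vertex, giving $W(T')\le W(T'')$ with $T''\cong C_{n-3}\!\left(\tfrac{n}{2}-1,2;\tfrac{n}{2}+t-1,1\right)$ and equality iff $T'\cong T''$. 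Finally, by Lemma~\ref{branching lemma} the product at $v$ is the constant $2\left(\tfrac{n}{2}-2\right)\left(\tfrac{n}{2}-1\right)$, while the only $t$-dependent contribution is the product at $w$, equal to $\left(\tfrac{n}{2}-2-t\right)\left(\tfrac{n}{2}+t\right)=\tfrac{n^2}{4}-n-2t-t^2$, which strictly decreases in $t$; hence $W(T'')$ is maximized exactly at $t=k-1$, where $T''\cong C_{n-3}\!\left(\tfrac{n}{2}-1,2;\tfrac{n}{2}+k-2,1\right)$.

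Chaining the three inequalities and tracking their equality cases shows that every TI tree $T$ of order $n$ not isomorphic to $C_{n-3}\!\left(\tfrac{n}{2}-1,2;\tfrac{n}{2}+k-2,1\right)$ satisfies $W(T)<W\!\left(C_{n-3}\!\left(\tfrac{n}{2}-1,2;\tfrac{n}{2}+k-2,1\right)\right)$, while the hypothesis $\{8k^2+17,8k^2+8k+9,8k^2+16k+9\}\cap\mathcal{PS}=\emptyset$ together with Lemma~\ref{even_sol_2} guarantees the extremal caterpillar is itself TI, completing the proof. The main obstacle is the majorization step: in the unconstrained setting full concentration would collapse $w$ into a degree-two vertex and return the starlike tree $S\!\left(\tfrac{n}{2}-1,\tfrac{n}{2}-2,2\right)$ of strictly larger Wiener index, which however is \emph{not} TI here by Lemma~\ref{even lemma-1}. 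One must therefore use that $w$ stays a branching vertex, so that the optimal admissible shape is the ``long path plus leaf'' caterpillar rather than a path; making this degree constraint precise and propagating strictness through all three reductions to secure uniqueness is the delicate part.
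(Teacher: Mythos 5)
Your proposal is correct and follows essentially the same route as the paper's proof: Lemma~\ref{even_arm_lemma} to pin down the structure at $v$, the transmission clash with $u_1$ to force a branching vertex $w$ in $T_1$ at distance at most $k-1$ from $v$, reduction to the caterpillar $C_{n-3}\left(\tfrac{n}{2}-1,2;\tfrac{n}{2}+t-1,1\right)$ via Lemma~\ref{arm_lemma} and Corollary~\ref{majorization_lemma}, monotonicity in $t$, and Lemma~\ref{even_sol_2} for the TI property of the extremal tree. The only immaterial difference is that you derive the monotonicity in $t$ from Lemma~\ref{branching lemma}, whereas the paper invokes Lemma~\ref{wiener_fusion_lemma}.
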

\begin{proof}
    Let $T$ be a TI tree of order $n$ with the maximum Wiener index such that  $v \in V(T)$ has the minimum transmission. Lemma \ref{even_arm_lemma} implies that $d_T(v) = 3$ and $T - v = \bigcup_{i = 1}^3 T_i$, where $|T_1| = \frac{n}{2} - 1$, $|T_2| = \frac{n}{2} - 2$ and $|T_3| = 2$. Let $v u_1 u_2$ be the pendent path of length two attached to $v$. If $T_1$ is a path from $v$, then the vertex on this path with distance $k$ to $v$ has the same transmission in $T$ as $u_1$, contradicting the fact that $T$ is a TI tree. Therefore $T_1$ must contain at least one branching vertex in $T$. We denote by $w$ the branching vertex in $T$ from $T_1$ closest to $v$. A similar conclusion can be made if $d_T(w, v) \ge k$. Therefore, we have $d_T(w, v) \le k - 1$.

    By repeatedly applying Lemma \ref{arm_lemma} and Corollary \ref{majorization_lemma} in the same way as in Lemma \ref{odd_type_a_1}, we conclude that $W(T) \le W(T')$, where $T' \cong C_{n - 3}\left(\frac{n}{2} - 1, 2; \frac{n}{2} + t - 1, 1 \right)$ and $t = d_{T}(w, v)$, with equality holding if and only if $T \cong T'$. Since $t \le k - 1$, Lemma \ref{wiener_fusion_lemma} implies that
    \[
        W(T') \le W\left( C_{n - 3}\left(\tfrac{n}{2} - 1, 2; \tfrac{n}{2} + k - 2, 1\right) \right),
    \]
    with equality holding if and only if $t = k - 1$. With all of this in mind, we conclude that $W(T) < W\left( C_{n - 3}\left(\tfrac{n}{2} - 1, 2; \tfrac{n}{2} + k - 2, 1\right) \right)$ holds if $T\ncong C_{n - 3}\left(\tfrac{n}{2} - 1, 2; \tfrac{n}{2} + k - 2, 1\right)$. The result now follows from Lemma \ref{even_sol_2}.
\end{proof}

\begin{proposition}\label{even_th_case_3}
Let $n \ge 34$ be even and suppose that $4n - 15\in \mathcal{PS}$. Let $k = \frac{\sqrt{4n - 15} - 1}{2}$ and $\{8k^2 + 17, 8k^2 + 8k + 9, 8k^2 + 16k + 9\}\cap\mathcal{PS}\neq\emptyset$. Then $C_{n - 3}\left( \tfrac{n}{2} - 1, 2; \tfrac{n}{2} + \tfrac{\sqrt{4n - 15} - 1}{2} - 3, 1\right)$ is the unique TI tree of order $n$ that attains the maximum Wiener index.
\end{proposition}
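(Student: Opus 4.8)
My plan is to mirror the proof of Proposition~\ref{even_th_case_2}, using Lemma~\ref{even_arm_lemma} to fix the coarse structure and then splitting on the location of the first branching vertex. I would take a TI tree $T$ of order $n$ of maximum Wiener index with $v$ of minimum transmission; by Lemma~\ref{even_arm_lemma}, $d_T(v)=3$ and $T-v$ has components of sizes $\frac{n}{2}-1,\frac{n}{2}-2,2$, and by Lemma~\ref{arm_lemma} I may assume the last two are pendent paths, so only the structure of the size-$(\frac{n}{2}-1)$ component $T_1$ is at issue. Writing $w$ for the branching vertex of $T_1$ nearest to $v$ and $t=d_T(w,v)$, the same collision argument as in Proposition~\ref{even_th_case_2} gives $t\le k-1$.

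When $t\le k-2$, repeated use of Lemma~\ref{arm_lemma}, Corollary~\ref{majorization_lemma} and the monotonicity furnished by Lemma~\ref{wiener_fusion_lemma} yields $W(T)\le W\!\left(C_{n-3}\!\left(\frac{n}{2}-1,2;\frac{n}{2}+k-3,1\right)\right)$, with equality only for this second candidate. Since the hypothesis together with Lemma~\ref{even_sol_2} shows the first candidate is not TI, Lemma~\ref{square_technical} certifies that the second candidate is TI, so it is an admissible extremal tree.

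The hard part is $t=k-1$, where the reduction lands on the perfect caterpillar $C_{n-3}\!\left(\frac{n}{2}-1,2;\frac{n}{2}+k-2,1\right)$ --- the unique maximizer for this $t$, but not TI --- so the cheap bound $W(T)\le W(\text{first candidate})$ overshoots the target, the first candidate exceeding the second by exactly $2k-1$. To recover a sharp estimate I would apply Lemma~\ref{branching lemma} directly to $T$: the triple at $v$ is fixed and the triple at $w$ is at least $\left(\frac{n}{2}-k-1\right)\!\left(\frac{n}{2}+k-1\right)$, so any non-perfect (in particular, any TI) $T$ must either redistribute the pendent paths at $w$, inflating its triple by at least $\left(\frac{n}{2}+k-1\right)\!\left(\frac{n}{2}-k-3\right)$, or carry a further branching vertex, whose triple is at least $n-3$. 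Either way the Wiener index falls below the first candidate by at least $n-3=k^2+k+1$, which beats $2k-1$ for every $k\ge1$; hence $W(T)<W(\text{second candidate})$. Equivalently, one can isolate the maximal TI competitors for $t=k-1$ as the three-branching trees of Lemmas~\ref{even_wiener_lemma_6}, \ref{even_wiener_lemma_4} and~\ref{even_wiener_lemma_5} and verify against part~(2) of Lemma~\ref{even_wiener_lemma_2} that each has strictly smaller Wiener index. Combining the two ranges of $t$ identifies the second candidate as the unique maximizer, and its TI-ness follows from Lemma~\ref{square_technical}. The genuine obstacle is precisely this quantitative $t=k-1$ step: one must show that repairing the transmission collision that disqualifies the perfect caterpillar necessarily costs more Wiener index than the slim margin $2k-1$ separating the two candidates.
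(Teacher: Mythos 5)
Your proposal is correct, and for the decisive step it takes a genuinely different route from the paper. Both proofs share the same skeleton: Lemma~\ref{even_arm_lemma} fixes the component sizes at $v$, the collision argument bounds $t = d_T(w,v) \le k-1$, the range $t \le k-2$ is handled exactly as in Proposition~\ref{even_th_case_2}, and Lemmas~\ref{even_sol_2} and~\ref{square_technical} certify that the second candidate is TI. Where you diverge is the case $t = k-1$. The paper first eliminates the subcase where $d_T(w)\ge 4$ or $w$ carries no leaf (via $C_{n-4}\bigl(\tfrac{n}{2}-1,2;\tfrac{n}{2}+k-2,2\bigr)$), then exploits the TI hypothesis a second time: the leaf $w'$ at $w$ has the fixed transmission $\beta + 2k^2+2$, the hypothesis forces a collision in the perfect caterpillar, and avoiding it forces an extra branching vertex at a precisely bounded distance, after which the three-branching caterpillars of Lemmas~\ref{even_wiener_lemma_6}, \ref{even_wiener_lemma_4} and~\ref{even_wiener_lemma_5} are compared against Lemma~\ref{even_wiener_lemma_2}(2) in three subcases. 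Your argument replaces all of this with a single counting estimate from Lemma~\ref{branching lemma}: since $T$ is TI it cannot be the perfect caterpillar, and any deviation from it with $t=k-1$ either inflates the triple product at $w$ by at least $\bigl(\tfrac{n}{2}+k-1\bigr)\bigl(\tfrac{n}{2}-k-3\bigr)=\tfrac{(k+1)^2(k+2)(k-2)}{4}$ or introduces a further branching vertex contributing at least $n-3=k^2+k+1$; both quantities exceed the margin $2k-1$ for $k\ge 5$. I checked the dichotomy (it is exhaustive: the triples at $v$ and at $w$ are bounded below by their perfect-caterpillar values, and any other difference manifests as one of the two deviations) and the arithmetic; the argument is sound and noticeably shorter, needing the TI property of $T$ only to rule out the single non-TI maximizer rather than to locate forced branching vertices. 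One cosmetic caveat: the phrase ``by Lemma~\ref{arm_lemma} I may assume the last two are pendent paths'' is not literally licensed (straightening may destroy the TI property), but your Lemma~\ref{branching lemma} bound does not actually need $T_2$ to be a path --- a non-path $T_2$ simply falls into the extra-branching-vertex case --- so this is a slip of narration rather than a gap.
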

\begin{proof}
    Note that Lemmas \ref{even_sol_2} and \ref{square_technical} imply that $C_{n - 3}\left( \tfrac{n}{2} - 1, 2; \tfrac{n}{2} + k - 3, 1\right)$ is TI. Now, let $T$ be a TI tree of order $n$ with the maximum Wiener index such that $v \in V(T)$ has the minimum transmission. From Lemma \ref{even_arm_lemma}, we get $d_T(v) = 3$ and $T - v = \bigcup_{i = 1}^3 T_i$, where $|T_1| = \frac{n}{2} - 1$, $|T_2| = \frac{n}{2} - 2$ and $|T_3| = 2$. Analogously to Proposition \ref{even_th_case_2}, we conclude that there exists a branching vertex $w$ in $T$ from $T_1$ closest to $v$ with $d_T(w, v) \le k - 1$.

    If $d_T(w, v) \le k - 2$, then it can be shown analogously to Proposition \ref{even_th_case_2} that
    \[
        W(T) \le W\left( C_{n - 3}\left( \tfrac{n}{2} - 1, 2; \tfrac{n}{2} + k - 3, 1\right) \right) ,
    \]
    with equality holding only if $T \cong C_{n - 3}\left( \tfrac{n}{2} - 1, 2; \tfrac{n}{2} + k - 3, 1\right)$. From the TI property of the tree $C_{n - 3}\big(\tfrac{n}{2}-1, 2; \tfrac{n}{2} + k - 3, 1\big)$, we get $T \cong C_{n - 3}\left( \tfrac{n}{2} - 1, 2; \tfrac{n}{2} + k - 3, 1\right)$.

    Now, suppose that $d_T(w, v) = k - 1$. If $d_T(w) \ge 4$ or there is no leaf attached to $w$, then the repeated use of Lemma \ref{arm_lemma} and Corollary \ref{majorization_lemma} gives
    \[
        W(T) \le W\left(C_{n - 4}\left(\tfrac{n}{2} - 1, 2; \tfrac{n}{2} + k - 2, 2\right)\right) .
    \]
    From Lemma \ref{even_wiener_lemma_2}, we obtain
    \[
        W(T) \le \frac{n^3-6n^2+47n-96}{6} .
    \]
    A routine computation shows that $W\left(C_{n - 3}\left(\tfrac{n}{2} - 1, 2; \tfrac{n}{2} + k - 3, 1\right)\right) > \frac{n^3-6n^2+47n-96}{6}$, thus yielding a contradiction. This means that $d_T(w) = 3$ and there is a leaf attached to $w$. Let $w'$ be the leaf attached to $w$ in $T$ and let $T_4$ be the component of $T - w$ that contains neither $v$ nor $w'$. We finalize the proof by splitting the problem into three corresponding cases.

    \bigskip\noindent
    \textbf{Case 1:} $8k^2 + 17\in \mathcal{PS}$.

    In this case, following the proof of Lemma \ref{even_sol_2}, we conclude that if $T_2$ is a path from $v$, then the vertex on this path with distance $\ell = \frac{\sqrt{8k^2 + 17} - 3}{2}$ to $v$ has the same transmission in $T$ as $w'$. A similar conclusion can be made if the branching vertex in $T$ from $T_2$ closest to $v$ has a distance to $v$ of at least $\ell$. Therefore, the repeated use of Lemma \ref{arm_lemma} and Corollary \ref{majorization_lemma}, together with Lemma \ref{wiener_fusion_lemma}, yields
    \[
        W(T) \le W\left(C_{n - 4}\left(\tfrac{n}{2} - 2, 2; \tfrac{n}{2} + k - 3, 1; \tfrac{n}{2} - \ell - 1, 1 \right)\right) .
    \]
    From Lemma \ref{even_wiener_lemma_6} and Lemma \ref{even_wiener_lemma_2} (2), we obtain
    \[
        W(T) \le \frac{n^3 - 6n^2 + 47n - 90 - 18k - 6 \ell}{6}<W\left(C_{n - 3}\left(\tfrac{n}{2} - 1, 2; \tfrac{n}{2} + k - 3, 1\right)\right)
    \]
   as a contradiction to the maximality of $T$.

    \bigskip\noindent
    \textbf{Case 2:} $8k^2 + 8k + 9\in \mathcal{PS}$.

    In this case, by the TI property of $T$, there exists a branching vertex in $T$ from $T_4$ closest to $w$ with distance to $v$ less than $\ell = \frac{\sqrt{8k^2 + 8k + 9} -  3}{2}$. By repeated use of Lemma \ref{arm_lemma} and Corollary~\ref{majorization_lemma}, alongside Lemma \ref{wiener_fusion_lemma}, we conclude that
    \[
        W(T) \le W\left(C_{n - 4}\left(\tfrac{n}{2} - 1, 2; \tfrac{n}{2} + k - 2, 1; \tfrac{n}{2} + \ell - 2, 1 \right)\right) .
    \]
    Lemma \ref{even_wiener_lemma_4} and Lemma \ref{even_wiener_lemma_2} (2) imply that
    \[
        W(T) \le \frac{n^3 - 6n^2 + 47n - 102 - 6k - 6\ell}{6}<W\left(C_{n - 3}\left(\tfrac{n}{2} - 1, 2; \tfrac{n}{2} + k - 3, 1\right)\right),
    \]
    which again yields a contradiction to the maximality of $T$.

    \bigskip\noindent
    \textbf{Case 3:} $8k^2 + 16k + 9\in \mathcal{PS}$.

    By using Lemma \ref{even_wiener_lemma_5}, this case can be resolved analogously to Case 2.
\end{proof}

Finally, we resolve case (iv) of Theorem \ref{even_th} through the next proposition, which can be proved via Lemma \ref{even_sol_4} analogously to Proposition \ref{even_th_case_2}.

\begin{proposition}\label{even_th_case_8}
For each even $n \ge 14$ such that $\{4n - 15,4n - 7\}\cap\mathcal{PS}=\emptyset$ and $8n - 23\in \mathcal{PS}$, $C_{n - 3}\left( \tfrac{n}{2} - 1, 2; \tfrac{n}{2} + \tfrac{\sqrt{8n - 23} - 1}{2} - 2, 1\right)$ is the unique TI tree of order $n$ that attains the maximum Wiener index.
\end{proposition}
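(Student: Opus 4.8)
The plan is to mirror the argument of Proposition~\ref{even_th_case_2} verbatim, replacing the role played there by $4n-15$ with $8n-23$ and invoking Lemma~\ref{even_sol_4} in place of Lemma~\ref{even_sol_2} at the final step. First I would note that the two standing hypotheses automatically exclude the exceptional orders barred in Lemma~\ref{even_arm_lemma}: if $n=16$ then $8n-23=105\notin\mathcal{PS}$, while if $n=24$ then $4n-15=81\in\mathcal{PS}$, so in either case one of the assumptions fails and hence $n\notin\{16,24\}$. Since moreover $\{4n-15,4n-7\}\cap\mathcal{PS}=\emptyset$ and $8n-23\in\mathcal{PS}$, case~(2) of Lemma~\ref{even_arm_lemma} applies to any TI tree $T$ of order $n$ attaining the maximum Wiener index: writing $v$ for its vertex of minimum transmission, we get $d_T(v)=3$ and $T-v=T_1\cup T_2\cup T_3$ with $|T_1|=\tfrac n2-1$, $|T_2|=\tfrac n2-2$ and $|T_3|=2$.

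Next I would set $k=\tfrac{\sqrt{8n-23}-1}{2}$, so that $k^2+k=2n-6$, and write $v u_1 u_2$ for the pendent path of length two at $v$. The key step is to bound the position of the branching vertex $w$ of $T_1$ closest to $v$. By Lemma~\ref{neighbors_lemma}, as computed in Lemma~\ref{even_sol_4}, the \emph{endpoint} $u_2$ satisfies $\Tr(u_2)=\Tr(v)+(k^2+k)=\Tr(v)+k(k+1)$, a value independent of the internal structure of $T_1$. Along the initial path of $T_1$ toward $w$, the transmission increment at the $j$-th step equals $n-2\bigl(\tfrac n2-j\bigr)=2j$ for every $j$ not exceeding $d_T(w,v)$, so the vertex at distance $k$ (when it precedes $w$) has transmission $\Tr(v)+k(k+1)$ as well. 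Consequently, if $T_1$ were a path, or if $d_T(w,v)\ge k$, this distance-$k$ vertex would share its transmission with $u_2$, contradicting the TI property; indeed, when $T_1$ is a path we have $T\cong S\!\left(\tfrac n2-1,\tfrac n2-2,2\right)$, which fails to be TI by Lemma~\ref{even lemma-1} precisely because $8n-23\in\mathcal{PS}$. Therefore $d_T(w,v)\le k-1$.

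With this bound secured, the remainder is the optimization already carried out in Proposition~\ref{even_th_case_2}. Repeatedly applying Lemma~\ref{arm_lemma} to straighten $T_2$ and the branches hanging off $w$, then Corollary~\ref{majorization_lemma} to consolidate the pendent paths at $w$, yields $W(T)\le W(T')$ with $T'\cong C_{n-3}\!\left(\tfrac n2-1,2;\tfrac n2+t-1,1\right)$ and $t=d_T(w,v)$, with equality if and only if $T\cong T'$. Since $t\le k-1$, Lemma~\ref{wiener_fusion_lemma} gives $W(T')\le W\!\left(C_{n-3}\!\left(\tfrac n2-1,2;\tfrac n2+k-2,1\right)\right)$, with equality if and only if $t=k-1$; this last tree is exactly the claimed extremal tree $C_{n-3}\!\left(\tfrac n2-1,2;\tfrac n2+\tfrac{\sqrt{8n-23}-1}{2}-2,1\right)$. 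Finally, Lemma~\ref{even_sol_4} certifies that it is itself TI, so it is admissible and, by the equality conditions above, uniquely attains the maximum.

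The argument presents no serious obstacle beyond bookkeeping, as it runs parallel to Proposition~\ref{even_th_case_2}. The one point demanding genuine care is the collision argument of the second paragraph: one must confirm that here it is the endpoint $u_2$ of the length-two pendent path, rather than its first vertex as in Proposition~\ref{even_th_case_2}, whose transmission $\Tr(v)+k(k+1)$ meets that of the distance-$k$ vertex, and that this coincidence is encoded by $8n-23\in\mathcal{PS}$ through the identity $k^2+k=2n-6$.
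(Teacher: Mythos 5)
Your proposal is correct and is exactly the argument the paper intends: it omits this proof, stating only that it follows "via Lemma \ref{even_sol_4} analogously to Proposition \ref{even_th_case_2}", and your write-up supplies that analogue faithfully. You also correctly pin down the one genuinely new detail, namely that the exclusion $d_T(w,v)\le k-1$ now comes from a collision with the endpoint $u_2$ (since $k^2+k=2n-6=\Tr(u_2)-\Tr(v)$ when $(2k+1)^2=8n-23$), and you rightly verify that the hypotheses force $n\notin\{16,24\}$ so that Lemma \ref{even_arm_lemma} applies.
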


Theorem \ref{even_th} now follows from Propositions \ref{even_th_case_1}, \ref{even_th_case_2}, \ref{even_th_case_3} and \ref{even_th_case_8}, as well as the computational results obtained by the \texttt{SageMath} script given in Appendix \ref{sage_script}.

\section{Some open problems}\label{sc_conclusion}

\ \indent In this paper we completely solve the Wiener index maximization problem of all TI trees of odd order $n\ge  7$. The maximum  TI trees of even order $n$ with respect to the Wiener index are also characterized for all $n\ge 14$ except for $n \ge 30$ with $4n - 7 \in \mathcal{PS}$, or $\{4n - 15, 4n - 7\} \cap \mathcal{PS} = \emptyset$ with $8n - 15 \in \mathcal{PS}$. In view of the above unsolved cases,  we would like to pose the following problem.

\begin{problem}
    Determine the maximum TI trees with respect to the Wiener index of even order $n \ge 30$ with $4n - 7 \in \mathcal{PS}$, and $\{4n - 15, 4n - 7\} \cap \mathcal{PS} = \emptyset$ with $8n - 15 \in \mathcal{PS}$, respectively.
\end{problem}

Considering the Wiener index maximization problem of the trees mentioned above, a natural problem occurs into our mind as follows.

\begin{problem}
    Characterize the minimum TI trees of a given order with respect to the Wiener index.
\end{problem}

As a class of graphs with a simple structure, trees are frequently studied in pure and applied graph theory. The Wiener index extremal problem can be extended in the following.

\begin{problem}
    Determine the extremal TI graphs with respect to the Wiener index among all general cycle-containing graphs under some structural constraints.
\end{problem}

Note that the \textit{Wiener complexity} \cite{AlKl2018} of a connected graph $G$ is just $W_c(G)=|\Tr(G)|$, that is, the number of distinct transmissions in $G$. For enlarging the class of TI graphs in this topic, we would like to end this paper with the following problem.

\begin{problem}
    Establish the inner relationship between the Wiener index and the Wiener complexity of a graph, such as providing a function of the Wiener index in terms of the Wiener complexity.
\end{problem}

\subsection*{Declarations}

\ \indent
\textbf{Data Availability} This work has no associated data.

\ \textbf{Author Contributions} All authors have made equal contributions to this work.

\ \textbf{Competing Interests} The authors have no competing interests.

\section*{Acknowledgements}

The authors sincerely thank two anonymous referees for their careful reading and some helpful
comments on our paper.
 I.\ Damnjanovi\'{c} is supported by the Ministry of Science, Technological Development and Innovation of the Republic of Serbia, grant number 451-03-137/2025-03/200102, and the Science Fund of the Republic of Serbia, grant \#6767, Lazy walk counts and spectral radius of threshold graphs --- LZWK. K.\ Xu is supported by NNSF of China (Grant No. 12271251).

\appendix

\section{\texorpdfstring{\texttt{SageMath}}{SageMath} script for TI trees of small order}\label{sage_script}

In this appendix section, we either establish that there is no TI tree of order $n$ or find all the TI trees of order $n$ attaining the maximum Wiener index, for each $n \in \{2, 3, \ldots, 24\}$. We accomplish this through a two-step computer-assisted search. In the first step, we use the efficient TI tree generation algorithm from \cite{StoDam2025} to find all the TI trees of order at most $24$. These trees are stored to the file \texttt{ti\_trees\_24.s6} in the \texttt{sparse6} format.

In the second step, we run a \texttt{SageMath} script that performs a brute-force search over all the obtained TI trees. The script shows that there is no TI tree of order $n \in \{2, 3, 4, 5, 6, 8, 10, 12\}$ and that for each remaining $n$, there is a unique TI tree of order $n$ with the maximum Wiener index. These extremal trees are, of course, precisely those given in Theorem \ref{odd_th}, Proposition \ref{even_secondary} and Theorem \ref{even_th}. The \texttt{SageMath} script code is given below.

\begin{lstlisting}[language = Python, frame = trBL, escapeinside={(*@}{@*)}, aboveskip=10pt, belowskip=10pt, numbers=left, rulecolor=\color{black}]
import numpy as np
from sage.all import *


def load_ti_trees(input_path):
    loaded_trees = {}

    tree_codes = []
    with open(input_path, "r") as opened_file:
        for line in opened_file:
            tree_codes.append(line.strip())

    for code in tree_codes:
        tree = Graph(code, sparse=True)

        n = tree.order()
        if n not in loaded_trees:
            loaded_trees[n] = []

        loaded_trees[n].append(tree)

    return loaded_trees


def find_maximum(n, ti_trees):
    solution_trees = []
    solution_wiener = -1

    for tree in ti_trees:
        wiener = tree.wiener_index()

        if wiener > solution_wiener:
            solution_trees = [tree]
            solution_wiener = wiener
        elif wiener == solution_wiener:
            solution_trees.append(tree)

    with open(f"solutions/{n:02}.txt", "w") as opened_file:
        counter = 0
        for tree in solution_trees:
            p = tree.plot()
            p.save(f"solutions/{n:02}_{counter:02}.png")
            opened_file.write(tree.sparse6_string())

            counter += 1


if __name__ == "__main__":
    ti_trees_24 = load_ti_trees("ti_trees_24.s6")

    for n in range(2, 25):
        if n not in ti_trees_24:
            continue

        find_maximum(n, ti_trees_24[n])
\end{lstlisting}

\end{document}